\newtheorem{Thm}{Theorem}[section]
\newtheorem{Lem}[Thm]{Lemma}
\newtheorem{Cor}[Thm]{Corollary}
\newtheorem{Prop}[Thm]{Proposition}
\newtheorem{Conj}[Thm]{Conjecture}
\theoremstyle{definition}
\newtheorem{remark}[Thm]{Remark}
\theoremstyle{definition}
\newtheorem{Def}[Thm]{Definition}
\newcommand{\F}{{\mathcal F}}
\newcommand{\M}{{\mathcal M}}
\newcommand{\cO}{{\mathcal O}}
\newcommand{\PP}{{\mathcal P}}
\newcommand{\C}{{\mathbb C}}
\newcommand{\FF}{{\mathbb F}}
\newcommand{\N}{{\mathbb N}}
\newcommand{\Z}{{\mathbb Z}}
\newcommand{\bun}{\mathbf{1}}
\newcommand{\bc}{{\mathbf c}}
\newcommand{\bd}{{\mathbf d}}
\newcommand{\be}{{\mathbf e}}
\newcommand{\bi}{{\mathbf i}}
\newcommand{\br}{{\mathbf r}}
\newcommand{\g}{{\mathfrak g}}
\newcommand{\n}{{\mathfrak n}}
\newcommand{\alp}{\alpha}
\newcommand{\de}{\delta}
\newcommand{\vep}{\varepsilon}
\newcommand{\la}{\lambda}
\newcommand{\cP}{\mathcal{P}}
\newcommand{\cM}{\mathcal{M}}
\newcommand{\cE}{\mathcal{E}}
\newcommand{\cF}{\mathcal{F}}
\newcommand{\cFl}{\mathcal{FL}}
\renewcommand{\le}{\leqslant}
\newcommand{\ad}{\operatorname{ad}}
\newcommand{\diag}{\operatorname{diag}}
\newcommand{\dimv}{\underline{\dim}}
\newcommand{\pdim}{\operatorname{proj.dim}}
\newcommand{\idim}{\operatorname{inj.dim}}
\newcommand{\GL}{\operatorname{GL}}
\newcommand{\Ima}{\operatorname{Im}}
\newcommand{\Ker}{\operatorname{Ker}}
\newcommand{\md}{\operatorname{mod}} 
\newcommand{\rep}{\operatorname{rep}}
\newcommand{\repvp}{\operatorname{rep}_{\mathrm{l.f.}}}
\newcommand{\vp}{\mathrm{l.f.}}
\newcommand{\rk}{\operatorname{rk}}
\newcommand{\rkv}{\underline{{\rm rank}}}
\newcommand{\Hom}{\operatorname{Hom}} 
\newcommand{\Ext}{\operatorname{Ext}}
\newcommand{\End}{\operatorname{End}}
\newcommand{\tp}{\operatorname{top}}
\newcommand{\soc}{\operatorname{soc}}
\newcommand{\bil}[1]{\langle #1\rangle}
\newcommand{\df}{\colon\,} 
\def\resp{{\em resp.\ }}
\def\ie{{\em i.e. }}
\newcommand{\bsm}{\begin{smallmatrix}}
\newcommand{\esm}{\end{smallmatrix}}
\newcommand{\bbsm}{\left[\begin{smallmatrix}}
\newcommand{\besm}{\end{smallmatrix}\right]}
\newcommand{\bbm}{\left(\begin{matrix}}
\newcommand{\ebm}{\end{matrix}\right)}
\begin{document}

\title[Quivers with relations for symmetrizable Cartan matrices III]{Quivers with relations for symmetrizable Cartan \\ matrices III: Convolution algebras}

\author{Christof Gei{\ss}}
\address{Christof Gei{\ss}\newline
Instituto de Matem\'aticas\newline
Universidad Nacional Aut{\'o}noma de M{\'e}xico\newline
Ciudad Universitaria\newline
04510 M{\'e}xico D.F.\newline
M{\'e}xico}
\email{christof@math.unam.mx}

\author{Bernard Leclerc}
\address{Bernard Leclerc\newline
Normandie Univ, France\newline
Unicaen, Lmno, F-14032 Caen France\newline
Cnrs, Umr 6139, F-14032 Caen France\newline
Institut Universitaire de France}
\email{bernard.leclerc@unicaen.fr}

\author{Jan Schr\"oer}
\address{Jan Schr\"oer\newline
Mathematisches Institut\newline
Universit\"at Bonn\newline
Endenicher Allee 60\newline
53115 Bonn\newline
Germany}
\email{schroer@math.uni-bonn.de}

\begin{abstract}
We realize the enveloping algebra of the positive part of a semisimple complex Lie algebra as a convolution algebra of constructible functions on module varieties of some Iwanaga-Gorenstein algebras
 of dimension 1.
\end{abstract}

\maketitle

\setcounter{tocdepth}{1}
\numberwithin{equation}{section}
\tableofcontents

\parskip2mm


\section{Introduction}


\subsection{}
Let $Q$ be a finite quiver without oriented cycles. Let $C$ be the symmetric generalized Cartan matrix corresponding
to the undirected graph underlying $Q$, and let $\g=\g(C)$ be the Kac-Moody Lie algebra attached to $C$. 
Kac \cite{K_Inv} has shown that the dimension vectors of the indecomposable representations of $Q$ form the roots of the 
positive part $\n = \n(C)$ of $\g$. 
For quivers $Q$ of finite type, 
Ringel \cite{R0,R1} found a direct construction of the Lie algebra $\n$ itself, and of its enveloping algebra 
$U(\n)$, in terms of the representation theory of $Q$.
He used Hall polynomials counting extensions of representations  
over $\FF_q$, and recovered $U(\n)$ as the Hall algebra of the path algebra $\FF_qQ$ specialized at $q=1$. 
Later, Schofield \cite{S} replaced counting points of varieties over $\FF_q$
by taking the Euler characteristic of complex varieties, and 
extended Ringel's result to an arbitrary quiver (see also \cite{Rie} in the finite type case).
Finally, Lusztig \cite{Lu1} reformulated Schofield's construction and
obtained $U(\n)$ as a convolution algebra of constructible
functions over the affine spaces $\rep(Q,\bd)$ of representations of $\C Q$ with dimension vector $\bd$.

This paper is a first step towards a broad generalization of Schofield's theorem. 
We take for $C$ an arbitrary symmetrizable
generalized Cartan matrix \cite[Sections~1.1 and 2.1]{K}. 
This means that there exists a diagonal matrix $D$ with positive integer diagonal entries 
such that $DC$ is symmetric. The corresponding Kac-Moody Lie algebra $\g=\g(C)$ is called symmetrizable.
With this datum together with an orientation $\Omega$ of the graph naturally attached to $C$,
we have associated in \cite{GLS1} a finite-dimensional algebra $H=H(C,D,\Omega)$ defined by a quiver with relations.
This algebra makes sense over an arbitrary field $K$, but here we fix $K=\C$ so that varieties of $H$-modules
are complex varieties.
When $C$ is symmetric and $D$ is the identity matrix, $H$ is just the path algebra of the
quiver $Q$ corresponding to $C$ and $\Omega$.
In general, it is shown in \cite{GLS1} that $H$ is Iwanaga-Gorenstein of dimension 1, and that its category 
of locally free modules (\ie modules of homological dimension $\le 1$) carries an Euler form
whose symmetrization is given by $DC$. 
By \cite[Proposition~3.1]{GLS2}
the affine varieties $\repvp(H,\br)$ of locally free $H$-modules with rank vector $\br$
are smooth and irreducible.  
By analogy with \cite{S,Lu1}, we then introduce a convolution bialgebra $\M=\M(H)$
of constructible functions on the varieties $\repvp(H,\br)$.
We show that $\M$ is a Hopf algebra isomorphic to the enveloping algebra of the Lie algebra of its 
primitive elements (Proposition~\ref{prop-M-Hopf}). 
Let again $\n$ denote the positive part of the symmetrizable Kac-Moody Lie algebra $\g$.
Our main result is then:

\begin{Thm}\label{Thm-main}
For $H = H(C,D,\Omega)$, $\cM = \cM(H)$ and $\n = \n(C)$
the following hold:
\begin{itemize}

\item[(i)]
There is a surjective Hopf algebra homomorphism
$$
\eta_H\df U(\n) \to \cM.
$$

\item[(ii)]
Assume that $C$ is of Dynkin type.
Then $\eta_H$ is a Hopf algebra isomorphism.

\end{itemize}
\end{Thm}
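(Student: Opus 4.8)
The plan is to build $\eta_H$ from the Chevalley generators of $U(\n)$ and to reduce part~(i) to a verification of the Serre relations and part~(ii) to an argument about graded ideals. For each vertex $i$ the variety $\repvp(H,\be_i)$ is a single $\GL$-orbit, namely that of the rank-one locally free module $E_i$; put $\theta_i:=1_{E_i}\in\cM$. Since $\be_i$ is not a sum of two nonzero elements of $\N^I$, the comultiplication of $\cM$ gives $\Delta(\theta_i)=\theta_i\otimes 1+1\otimes\theta_i$, so the $\theta_i$ are primitive. Recall that $U(\n)$ is the associative algebra on generators $e_i$ ($i\in I$) subject only to the Serre relations (the Serre presentation; due to Serre for $C$ of Dynkin type and to Gabber and Kac in general), and that the $e_i$ are primitive in $U(\n)$. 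Hence, to get an algebra homomorphism $\eta_H\colon U(\n)\to\cM$ with $\eta_H(e_i)=\theta_i$ it is enough to check that the $\theta_i$ satisfy the Serre relations in $\cM$; such a map is then automatically a homomorphism of Hopf algebras, its compatibility with the comultiplication, counit and antipode being verifiable on the generators using primitivity of the $\theta_i$. Surjectivity of $\eta_H$ holds because $\cM$ is, by construction, generated as an algebra -- equivalently as a bialgebra -- by the $\theta_i$.

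It thus remains, for~(i), to establish the Serre relations. Fixing $i\ne j$ and setting $p:=-c_{ij}\ge 0$, where $c_{ij}$ is the corresponding entry of $C$, one must show that $\sum_{k=0}^{p+1}(-1)^k\binom{p+1}{k}\,\theta_i^{*k}*\theta_j*\theta_i^{*(p+1-k)}=0$, an identity of $\GL$-invariant constructible functions on $\repvp(H,(p+1)\be_i+\be_j)$. Evaluated at a locally free module $M$ of that rank, a term $\theta_i^{*a}*\theta_j*\theta_i^{*b}$ with $a+b=p+1$ is the Euler characteristic of the variety of chains $0=M_0\subset M_1\subset\dots\subset M_{p+2}=M$ of submodules whose subquotients are locally free and isomorphic to $E_i$ in positions $1,\dots,b$, to $E_j$ in position $b+1$, and to $E_i$ in positions $b+2,\dots,p+2$. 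As everything is concentrated at the two vertices $i$ and $j$, this computation is carried out over the rank-two algebra $H'=H(C',D',\Omega')$ attached to the $2\times 2$ submatrix of $C$ on $\{i,j\}$, whose locally free modules of the relevant ranks are classified in \cite{GLS1}; one then evaluates the Euler characteristics and checks that the alternating sum vanishes on every $M$. For $C'$ of type $A_1\times A_1$ the modules split and $\theta_i*\theta_j=\theta_j*\theta_i$; for $C'$ of types $A_2$, $B_2/C_2$ and $G_2$ one recovers the computations of \cite{R0,S,Lu1}; the remaining non-Dynkin symmetrizable rank-two cases, which are genuinely new and are needed for~(i) in full generality, rest on the classification of rank-two locally free $H'$-modules. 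I expect this to be the main obstacle: the classical arguments of Ringel, Schofield and Lusztig exploit that the ambient algebra is hereditary, so the rank-two reduction lands in well-understood (tilted) path algebras, whereas here one must control locally free modules, and the Euler characteristics of their flag varieties, over the rank-two Iwanaga-Gorenstein algebras $H(C',D',\Omega')$, including those of affine and indefinite type; the computation might alternatively be organized through the reflection functors of \cite{GLS1} acting on the $\cM(H)$. Granting it, part~(i) is proved.

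For part~(ii), by Proposition~\ref{prop-M-Hopf} we have $\cM=U(\mathfrak{L})$ with $\mathfrak{L}$ the Lie algebra of primitive elements of $\cM$; since, by PBW, $\mathfrak{L}$ is generated by the $\theta_i\in\mathfrak{L}$, and since $\eta_H$ maps primitives to primitives, $\eta_H$ restricts to a surjection $\n\twoheadrightarrow\mathfrak{L}$ of $\N^I$-graded Lie algebras whose kernel $\mathfrak{k}$ is a graded ideal of $\n$, and it suffices to prove $\mathfrak{k}=0$. Assume $C$ of Dynkin type; since an ideal of a product of Lie algebras is a product of ideals, we may assume $C$ irreducible, so that $\n_\beta$ is one-dimensional for each $\beta\in\Phi^+$ and the highest root $\theta$ is the unique positive root with $\theta+\alpha_i\notin\Phi^+$ for all $i$. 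If $\mathfrak{k}\ne 0$, pick $\beta$ of maximal height with $e_\beta\in\mathfrak{k}$; for each $i$ with $\beta+\alpha_i\in\Phi^+$ we have $0\ne[e_i,e_\beta]\in\mathfrak{k}$ (the structure constant being nonzero since $\beta+\alpha_i$ is a root), whence $e_{\beta+\alpha_i}\in\mathfrak{k}$, contradicting maximality; so $\beta=\theta$ and $e_\theta\in\mathfrak{k}$. It therefore suffices to show $\eta_H(e_\theta)\ne 0$. Writing $e_\theta$ as an iterated bracket of Chevalley generators along a reduced word, $\eta_H(e_\theta)$ is the corresponding iterated bracket of the $\theta_i$; evaluating it at the locally free indecomposable $Z_\theta$ with $\rkv(Z_\theta)=\theta$ -- which exists by the Gabriel-type theorem of \cite{GLS1} -- reduces, as in \cite{S,Lu1}, to the Euler characteristics of a few flag varieties attached to $Z_\theta$, the decisive one being a single point, so $\eta_H(e_\theta)\ne 0$. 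Hence $\mathfrak{k}=0$ and $\eta_H$ is an isomorphism. The flag-variety input needed here is of the same nature as in part~(i) but, being confined to Dynkin type, where there are finitely many orbits and all locally free indecomposables are rigid, should be considerably more tractable.
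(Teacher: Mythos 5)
Your overall architecture is sound (establish Serre relations to get a surjection, then show the kernel of the induced map $\n\to\cP(\cM)$ vanishes), but both halves defer exactly the step that carries the mathematical content, and the paper handles each of those steps quite differently from what you sketch.

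For part (i), you propose to verify the Serre relations by reducing to rank-two subalgebras $H'$ and computing Euler characteristics of flag varieties, and you openly flag this as ``the main obstacle.'' The paper's Proposition~\ref{prop-relations} sidesteps that computation entirely. Since $\theta_i$ is primitive, so is $\Theta_{ij}:=(\ad\theta_i)^{1-c_{ij}}(\theta_j)$; by Lemma~\ref{lem-indec} a primitive element is supported on indecomposables, and by Lemma~\ref{lem-support} any element of $\cM$ is supported on locally free modules. So $\Theta_{ij}=0$ will follow if there is \emph{no} indecomposable locally free module of rank vector $(1-c_{ij})\alpha_i+\alpha_j$. This is then shown by a short module-theoretic argument using the generalized modulated graph description of $H$: whichever orientation $(i,j)$ or $(j,i)$ lies in $\Omega$, the component $M_i\cong H_i^{1-c_{ij}}$ is one rank too large compared to ${}_iH_j\otimes_{H_j}M_j\cong H_i^{-c_{ij}}$, forcing a direct summand isomorphic to $E_i$ to split off. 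No Euler characteristics, no rank-two classification, and it works uniformly for arbitrary symmetrizable $C$ and arbitrary $D$. You should be aware that the rank-two Euler-characteristic route you sketch is not only unnecessary but genuinely hard to push through, precisely for the reason you anticipate.

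For part (ii), your reduction to the highest root is a legitimate and clean observation: $\ker(\n\to\cP(\cM))$ is a graded ideal, and in irreducible Dynkin type any nonzero ideal must contain the highest root space, so it suffices to prove $\eta_H(e_\theta)\ne0$. This differs from the paper, which proves the stronger statement (Theorem~\ref{thm-primitive}) that $\cP(\cM)_\gamma\ne0$ for \emph{every} positive root $\gamma$. However, your reduction does not actually save the work you hope it saves. Your claim that evaluating the iterated bracket at $Z_\theta$ ``reduces, as in \cite{S,Lu1}, to the Euler characteristics of a few flag varieties \ldots\ the decisive one being a single point'' is not justified: \cite{S} and \cite{Lu1} work with hereditary path algebras, where the relevant flag varieties are points by dimension/rigidity arguments that do not transfer. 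Over $H(C,D,\Omega)$ with $D$ nontrivial the locally free indecomposables have rich internal structure, and the whole of Section~\ref{sec-primitive} of the paper exists because controlling these evaluations requires new technology (pseudo Auslander--Reiten sequences, admissible triples, and a delicate case-by-case check, with the highest roots of $F_4$ and $G_2$ among the hardest cases). Your sketch gives no mechanism to avoid that work. Finally, you do not address non-minimal symmetrizers at all: the paper proves the theorem for minimal $D$ and then invokes the change-of-symmetrizer functor $R_\vp(l)$ of \cite{GLS2}, whose key property (Corollary~1.3 there) is that it preserves Euler characteristics of flag varieties, to transfer the result to arbitrary $D$. Without this, your part (ii) as written only covers the minimal symmetrizer.
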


We conjecture that $\eta_H$ is an isomorphism for all symmetrizable generalized Cartan matrices $C$ and all symmetrizers $D$.

This generalizes Schofield's theorem in two directions. 
Firstly, 
Theorem~\ref{Thm-main} gives 
a new complex geometric construction of $U(\n)$
for non-symmetric Cartan matrices of Dynkin type. 
Secondly, note that if $D$ is a symmetrizer for $C$, then
$kD$ is also a symmetrizer for every $k \ge 1$. 
As $k$ increases, the categories of locally free modules over
$H(C,kD,\Omega)$ become more and more rich and complicated, the dimension of the varieties $\repvp(H(C,kD,\Omega),\br)$ 
increase and their orbit structure gets finer, but at least in the Dynkin case, and conjecturally in all cases, the convolution algebras  
$\M(H(C,kD,\Omega))$ remain the same. 
Thus for every semisimple complex Lie algebra $\g$ we get
an infinite series of exact categories 
$\repvp(H(C,kD,\Omega)), \ (k\ge 1)$
whose convolution algebras 
$\M(H(C,kD,\Omega))$
are isomorphic to $U(\n)$. 
This appears to be new, even when $C$ is symmetric.

Theorem~\ref{Thm-main} is inspired from \cite{S}.
In fact when $C$ is symmetric and $D$ is the identity matrix 
the algebra $\M$ coincides with the algebra denoted by $R^+(Q)$ in \cite{S}.
However in all other cases, 
$\M$ is defined using filtrations which are not composition series.
We also note that, following \cite{Lu1} and in contrast to \cite{S},
we systematically use the language of constructible functions and convolution products.

\subsection{}
Let us outline the structure of the paper.
In Section~\ref{sec-DlabRingel} we briefly recall the definition of a representation of a modulated graph
in the sense of Dlab and Ringel.
In Section~\ref{section-H} we review the definition of $H(C,D,\Omega)$ and the results 
of \cite{GLS1} which will be needed in the sequel. 
In Section~\ref{section-M} we introduce the algebra of constructible functions $\M(H)$ and 
show that it is a homomorphic image of $U(\n)$ (Corollary~\ref{cor-phi}). 
In Section~\ref{section-pseudo} we construct for the preprojective modules over the algebras $H = H(C,D,\Omega)$ analogues of the Auslander-Reiten sequences of the preprojective representations 
of a modulated graph associated with the datum $(C,D,\Omega)$.
This will be used in the proof of our main result, but it should
also be of independent interest.
Section~\ref{sec-primitive} is devoted
to the explicit construction of certain primitive elements
in the Hopf algebra $\cM(H)$ in the case where $C$ is of type $B_n$, $C_n$, $F_4$ or $G_2$, and $D$ is the minimal symmetrizer.  
In Section~\ref{sec-proofmain} we prove
that the homomorphism $U(\n) \to \cM(H)$ is an isomorphism provided $C$ is
of Dynkin type. For a minimal symmetrizer $D$, this follows from Section~\ref{sec-primitive}.
For the case of an arbitrary symmetrizer, we use a geometric result from \cite{GLS2}.
Finally, we discuss some examples in Section~\ref{sec-examples}.

\subsection{Notation}
By $\N$ we denote the natural numbers including $0$.
If not mentioned otherwise, by a \emph{module} we mean a
finite-dimensional left module.
For a module $M$ and a positive integer $m$, let $M^m$ be the
direct sum of $m$ copies of $M$.


\section{Representations of modulated graphs}\label{sec-DlabRingel}


Following Dlab and Ringel \cite{DR} we quickly review the definition of a 
representation of a modulated graph.
Note however that \cite{DR} uses somewhat dual conventions than those presented here.

Let $C = (c_{ij})\in M_n(\Z)$ be a \emph{symmetrizable generalized Cartan matrix}, and let 
$D=\diag(c_1,\ldots,c_n)$ be a \emph{symmetrizer} of $C$. 
This means that $c_i \in \Z_{>0}$, and
\[
c_{ii} = 2, \qquad\qquad
 c_{ij} \le 0\quad\mbox{for}\quad i\not = j,\qquad\qquad
 c_ic_{ij} = c_jc_{ji}.
\]
An \emph{orientation} of $C$ is a subset 
$\Omega \subset  \{ 1,2,\ldots,n \} \times \{ 1,2,\ldots,n \}$
such that the following hold:
\begin{itemize}

\item[(i)]
$\{ (i,j),(j,i) \} \cap \Omega \not= \varnothing$
if and only if $c_{ij}<0$;

\item[(ii)]
For each sequence $((i_1,i_2),(i_2,i_3),\ldots,(i_t,i_{t+1}))$ with
$t \ge 1$ and $(i_s,i_{s+1}) \in \Omega$ for all $1 \le s \le t$, we have
$i_1 \not= i_{t+1}$.

\end{itemize}

Let $F$ be a field, and let $(F_i,{_i}F_j)$ with $1 \le i \le n$ and $(i,j) \in \Omega$ be a \emph{modulation} associated with $(C,D,\Omega)$.
Thus $F_i$ is an $F$-skew-field with $\dim_F(F_i) = c_i$,
and ${_i}F_j$ is an $F_i$-$F_j$-bimodule such that $F$ acts centrally
on ${_i}F_j$, and we have
${_i}F_j \cong F_i^{|c_{ij}|}$ as left $F_i$-modules and 
${_i}F_j \cong F_j^{|c_{ji}|}$ as right $F_j$-modules.
(Such a modulation exists, provided we work over
a suitable ground field $F$.)
A \emph{representation} $(X_i, X_{ij})$ of this modulation consists of a finite-dimensional
$F_i$-module $X_i$ for each $1 \le i \le n$, and of an $F_i$-linear map
$$
X_{ij}\df {_i}F_j \otimes_{F_j} X_j \to X_i
$$
for each $(i,j) \in \Omega$. 
Let 
$$
S := \prod_{i=1}^n F_i
\text{\;\;\; and \;\;\;}
B := \bigoplus_{(i,j) \in \Omega} {_i}F_j.
$$
Thus $B$ is an $S$-$S$-bimodule.
Let $T = T(C,D,\Omega) = T_S(B)$ be the corresponding tensor algebra.
The algebra $T$ is a finite-dimensional hereditary $F$-algebra, and
the abelian category of representations of the modulation $(F_i,{_i}F_j)$ is equivalent
to the category $\md(T)$ of finite-dimensional $T$-modules.
We refer to \cite{DR} for further details on the representation theory
of modulated graphs.


\section{The algebras $H(C,D,\Omega)$}\label{section-H}


\subsection{Definition of $H(C,D,\Omega)$}\label{subsec-defH}
We use the same notation as in \cite{GLS1}. 
Let $(C,D,\Omega)$ be as in Section~\ref{sec-DlabRingel}, i.e. $C$ is a symmetrizable
generalized Cartan matrix, $D = \diag(c_1,\ldots,c_n)$ is a
symmetrizer and $\Omega$ an orientation of $C$.
When $c_{ij} < 0$ define
\[
g_{ij} := |\gcd(c_{ij},c_{ji})|,\qquad
f_{ij} := |c_{ij}|/g_{ij}.
\]
Let
$Q := Q(C,\Omega) := (Q_0,Q_1)$ be the quiver with 
vertex set $Q_0 := \{ 1,\ldots, n\}$ and 
with arrow set  
$$
Q_1 := \{ \alp_{ij}^{(g)}\df j \to i \mid (i,j) \in \Omega, 1 \le g \le g_{ij} \}
\cup  \{ \vep_i\df i \to i \mid 1 \le i \le n \}.
$$
Let
$$
H := H(C,D,\Omega) := \C Q/I
$$ 
where $\C Q$ is the path algebra of $Q$, and $I$ is the ideal of $\C Q$
defined by the following
relations:
\begin{itemize}

\item[(H1)] 
for each $i$, we have 
\[
\vep_i^{c_i} = 0;
\]

\item[(H2)]
for each $(i,j) \in \Omega$ and each $1 \le g \le g_{ij}$, we have
\[
\vep_i^{f_{ji}}\alp_{ij}^{(g)} = \alp_{ij}^{(g)}\vep_j^{f_{ij}}.
\]

\end{itemize}
This definition is illustrated by many examples in \cite[Section~13]{GLS1}.

Clearly, $H$ is a finite-dimensional $\C$-algebra.
It is known \cite[Theorem 1.1]{GLS1} that $H$ is Iwanaga-Gorenstein of dimension 1.
This means that for an $H$-module $M$ the following are equivalent:
\begin{itemize}

\item
$\pdim(M) < \infty$,

\item
$\idim(M) < \infty$,
 
\item
$\pdim(M) \le 1$,

\item 
$\idim(M) \le 1$. 

\end{itemize}
Moreover, if $M$ is a submodule of a projective $H$-module and if $\pdim(M)\le 1$,
then $M$ is projective. 
Dually, if $M$ is a quotient module of an injective $H$-module and $\idim(M)\le 1$
then $M$ is injective.

Note that if $C$ is symmetric and if $D$ is the identity matrix, 
then $H$ is isomorphic to the path algebra $\C Q^\circ$, where
$Q^\circ$ is the acyclic quiver obtained from $Q$ by deleting
all loops $\vep_i$.
More generally, it is easy to see that if $C$ is symmetric
and $D=\diag(k,\ldots,k)$ for some $k>0$, then $H$ is isomorphic 
to $R Q^\circ := R\otimes_\C \C Q^\circ$, where $R$ is the truncated polynomial ring
$\C[x]/(x^k)$. In that case, $H$-modules are nothing else than 
representations of $Q^\circ$ over the ring $R$. 
When $C$ is only symmetrizable, one has a similar picture by replacing
the path algebra $R Q^\circ$ by a generalized modulated graph over a family of truncated polynomial
rings, as we shall now explain.

\subsection{Generalized modulated graphs}\label{subsec-modulation}
Let $H = H(C,D,\Omega)$. 
It was shown in \cite[Section~5]{GLS1} that $H$ gives rise to a generalized modulated graph, and that
the category of $H$-modules is isomorphic to the category of representations of this generalized modulated graph.
This viewpoint, which is very close to Dlab and Ringel's \cite{DR} representation theory of modulated graphs outlined in Section~\ref{sec-DlabRingel},
will be useful in several places below.

For $i = 1,\ldots,n$, let $H_i$ be the subalgebra of $H$ generated by $\vep_i$.
Thus $H_i$ is isomorphic to $\C[x]/(x^{c_i})$.
For $(i,j) \in \Omega$, we define
$$
{_i}H_j := H_i \,\,{\rm Span}_\C(\alp_{ij}^{(g)} \mid 1 \le g \le g_{ij})\,H_j. 
$$
It is shown in \cite{GLS1} that ${_i}H_j$
is an $H_i$-$H_j$-bimodule, which is free as a left $H_i$-module
and free as a right $H_j$-module.
An $H_i$-basis of ${_i}H_j$ is given by
$$
\{ \alp_{ij}^{(g)}, \alp_{ij}^{(g)}\vep_j,\ldots,
\alp_{ij}^{(g)}\vep_j^{f_{ij}-1} \mid 1 \le g \le g_{ij} \}.
$$
In particular, we have an isomorphism ${_i}H_j \cong H_i^{|c_{ij}|}$ of left $H_i$-modules, and we have an isomorphism
${_i}H_j \cong H_j^{|c_{ji}|}$ of right $H_j$-modules.

The tuple $(H_i, {_i}H_j)$ with $1 \le i \le n$ and $(i,j) \in \Omega$ is called a \emph{generalized modulation} associated with the datum
$(C,D,\Omega)$. 
A \emph{representation} $(M_i, M_{ij})$ of this generalized modulation consists of a finite-dimensional
$H_i$-module $M_i$ for each $1 \le i \le n$, and of an $H_i$-linear map
$$
M_{ij}\df {_i}H_j \otimes_{H_j} M_j \to M_i
$$
for each $(i,j) \in \Omega$. 
The representations of this generalized modulation form an abelian category 
$\rep(C,D,\Omega)$ isomorphic to the category of $H$-modules \cite[Proposition 5.1]{GLS1}.  
(Here we identify the category of $H$-modules with the category of
representations of the quiver $Q(C,\Omega)$ satisfying the relations
(H1) and (H2).)
Given a representation $(M_i,M_{ij})$ in $\rep(C,D,\Omega)$ the corresponding $H$-module
$
(M_i,M(\alp_{ij}^{(g)}),M(\vep_i))
$ 
is obtained as follows:
the $\C$-linear map $M(\vep_i)\df M_i \to M_i$ is given by
$$
M(\vep_i)(m) := \vep_i m.
$$
(Here we use that $M_i$ is an $H_i$-module).
For $(i,j) \in \Omega$, the $\C$-linear map $M(\alp_{ij}^{(g)})\df M_j \to M_i$ is defined by
$$
M(\alp_{ij}^{(g)})(m) := M_{ij}(\alp_{ij}^{(g)} \otimes m).
$$
The maps $M(\alp_{ij}^{(g)})$ and $M(\vep_i)$ satisfy the defining relations (H1) and (H2)
of $H$ because the maps $M_{ij}$ are $H_i$-linear.

\subsection{Locally free $H$-modules} \label{subsec-loc-free}
We say that an $H$-module $M=(M_i,M(\alp_{ij}^{(g)}),M(\vep_i))$ is \emph{locally free} if
for every $i$ the $H_i$-module $M_i$ is free. By \cite[Theorem 1.1]{GLS1}, $M$ is locally free
if and only if $\pdim(M) \le 1$.
The full subcategory $\repvp(H)$ whose objects are the finite-dimensional locally free modules
is closed under extensions, kernels of epimorphisms and cokernels of monomorphisms,
and it has Auslander-Reiten sequences \cite[Lemma 3.8, Theorem 3.9]{GLS1}.

The \emph{rank vector} of $M\in \repvp(H)$ is defined as 
$\rkv(M) = (\rk(M_1),\ldots,\rk(M_n))$.
Let $\alpha_1,\ldots,\alpha_n$ be the standard basis of $\Z^n$.
For $1 \le i \le n$ we denote by $E_i$ the unique locally free $H$-module 
with rank vector $\alpha_i$. 
In other words, $E_i$ is nothing else than $H_i$
regarded as an $H$-module in the obvious way. 

For $M,N\in\repvp(H)$, the integer
\[
\bil{M,N}_H := \dim \Hom_H(M,N) - \dim \Ext_H^1(M,N) 
\]
depends only on the rank vectors $\rkv(M)$ and $\rkv(N)$, see \cite[Proposition 4.1]{GLS1}.
The map $(M,N)\mapsto \bil{M,N}_H$ thus descends to a bilinear form on the Grothendieck group $\Z^n$
of $\repvp(H)$, given on the basis $\alp_i=\rkv(E_i)$ by
$$
\bil{\alp_i,\alp_j}_H = 
\begin{cases}
c_ic_{ij} & \text{if $(j,i) \in \Omega$},\\
c_i & \text{if $i=j$},\\
0 & \text{otherwise}.
\end{cases}
$$
Let $(-,-)_H$ 
be the symmetrization of $\bil{-,-}_H$ defined by
$(a,b)_H := \bil{a,b}_H + \bil{b,a}_H$, and let
$q_H$ be the quadratic form defined by
$q_H(a) := \bil{a,a}_H$.
Note that $(-,-)_H$ is nothing else than the symmetric bilinear form   
\[
 (\alp_i,\alp_j) = c_ic_{ij},\qquad (1\le i,j \le n)
\]
associated with the symmetric matrix $DC$.


\section{The convolution algebra $\M$}\label{section-M}


\subsection{Definition of the algebra $\M$}\label{def-M_H}
As before, let $H = H(C,D,\Omega)$ and $Q = Q(C,\Omega)$.
For an arrow $a\df i \to j$ in $Q$ set $s(a) = i$ and $t(a) = j$.
Let $\rep(H,\bd)$ be the affine complex variety of $H$-modules with
dimension vector $\bd = (d_1,\ldots,d_n)$. 
By definition the closed points in $\rep(H,\bd)$ are
tuples
$$
M = (M(a))_{a \in Q_1} \in
\prod_{a \in Q_1} \Hom_\C(\C^{d_{s(a)}},\C^{d_{t(a)}})
$$
of $\C$-linear maps such that
$$
M(\vep_i)^{c_i} = 0
$$
and for each $(i,j) \in \Omega$ and $1 \le g \le g_{ij}$ we have
\[
M(\vep_i)^{f_{ji}}M(\alp_{ij}^{(g)}) = M(\alp_{ij}^{(g)})M(\vep_j)^{f_{ij}}.
\]
The group $G_\bd := \GL_{d_1} \times \cdots \times \GL_{d_n}$ acts on $\rep(H,\bd)$ by conjugation.
The $G_\bd$-orbit of $M \in \rep(H,\bd)$ is denoted by
$\cO_M$.
The $G_\bd$-orbits of $\rep(H,\bd)$ are in one-to-one correspondence with isomorphism classes of $H$-modules with dimension vector $\bd$.

Recall that a \emph{constructible function} on a complex algebraic variety $V$ is a map $\varphi\df V \to \C$ such that
the image of $\varphi$ is finite, and for each $a \in \C$ the preimage $\varphi^{-1}(a)$
is a constructible subset of $V$.
Let $\F_\bd$ be the complex vector space of constructible functions $f\df \rep(H,\bd) \to \C$ which are constant 
on $G_\bd$-orbits, and let
\[
\F = \F(H)= \bigoplus_{\bd\in\N^n} \F_\bd.
\]
We endow $\F$ with a convolution product $*$ defined as in 
\cite[Section~10.12]{Lu1} or \cite[Section~2.1]{Lu2}, using Euler characteristics of 
constructible subsets. Namely, we put
\[
 (f*g)(X) = \int_{Y\subseteq X} f(Y)g(X/Y) d\chi, \qquad (f,g\in\F,\ X\in \rep(H)). 
\]
Here, the integral is taken on the projective variety of all $H$-submodules $Y$ of $X$, and for a constructible function 
$\varphi$ on a variety $V$, we set
\[
 \int_{Y\in V} \varphi(Y) d\chi = \sum_{a\in\C} a \cdot \chi(\varphi^{-1}(a)).
\]
It is well known that $(\F,*)$ has the structure of an $\N^n$-graded associative $\C$-algebra, see e.g. \cite[Section~4.2]{BT}.

Let $\be_i= (0,\ldots,c_i,\ldots,0)\in\N^n$ be the dimension vector of $E_i$. 
Let $\theta_i\in \F$ denote the characteristic function of the $G_{\be_i}$-orbit of $\rep(H,\be_i)$ corresponding to $E_i$. 
In other words, we have
$$
\theta_i(M) = \begin{cases}
1 & \text{if $M \cong E_i$},\\
0 & \text{otherwise}.
\end{cases}
$$
\begin{Def}
We denote by $\M = \M(H)$ the subalgebra of $(\F,*)$ generated by $\theta_1,\ldots,\theta_n$, and for $\bd \in \N^n$ we set
$\M_\bd = \M \cap \F_\bd$. 
\end{Def}

Note that $\M_\bd$ is a finite-dimensional vector space. 
The identity element $\bun_\M$ of $\M$ is the characteristic function of the zero $H$-module.

\begin{Lem}\label{lem-support}
Let $f\in\M_\bd$ and $X\in \rep(H,\bd)$.  
If $X$ is not locally free, then $f(X)=0$.
\end{Lem}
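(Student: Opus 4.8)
The statement says every function in $\M_\bd$ vanishes on non-locally-free modules. I want to prove this by induction on $\bd$ (with respect to, say, the total dimension $|\bd| = \sum_i d_i$, or equivalently by the obvious partial order on $\N^n$). Since $\M$ is generated as an algebra by $\theta_1,\dots,\theta_n$, and $\M_\bd$ is spanned by convolution products $\theta_{i_1} * \cdots * \theta_{i_m}$ with $\be_{i_1} + \cdots + \be_{i_m} = \bd$, it suffices to show that each such product vanishes on non-locally-free modules. The base cases $\bd = \be_i$ are immediate: $\theta_i$ is the characteristic function of the orbit of $E_i$, which is locally free, so $\theta_i$ vanishes off that orbit and in particular on every non-locally-free module of dimension vector $\be_i$.

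**Inductive step.** Write $f = \theta_{i_1} * g$ where $g = \theta_{i_2} * \cdots * \theta_{i_m} \in \M_{\bd'}$ with $\bd' = \bd - \be_{i_1}$, and fix $X \in \rep(H,\bd)$ that is not locally free. By definition of the convolution product,
\[
f(X) = \int_{Y \subseteq X} \theta_{i_1}(Y)\, g(X/Y)\, d\chi,
\]
the integral being over the projective variety of all $H$-submodules $Y$ of $X$. The integrand is supported on those $Y$ with $Y \cong E_{i_1}$ (so $\dimv Y = \be_{i_1}$) and $g(X/Y) \neq 0$. By the inductive hypothesis applied to $g \in \M_{\bd'}$, the condition $g(X/Y) \neq 0$ forces $X/Y$ to be locally free. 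So I only need to rule out the existence of a short exact sequence $0 \to E_{i_1} \to X \to Z \to 0$ with $Z$ locally free and $X$ not locally free: if no such sequence exists, the integrand is identically zero and $f(X) = 0$.

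**The key homological point.** This is where I would use the Iwanaga-Gorenstein property recalled in Section~\ref{subsec-defH}. In such a sequence, $E_{i_1}$ is locally free (it is even projective, being $H_i$ as an $H$-module, but more relevantly $\pdim E_{i_1} \le 1$), and $Z$ is locally free. The class $\repvp(H)$ is closed under extensions (Section~\ref{subsec-loc-free}), so $X$, being an extension of the locally free module $Z$ by the locally free module $E_{i_1}$, is itself locally free — contradicting the choice of $X$. Hence the integrand vanishes and $f(X) = 0$, completing the induction. I would phrase this cleanly by noting that $\M_\bd$ is spanned by such products and that the vanishing property is visibly inherited under the convolution with a $\theta_i$, given closure of $\repvp(H)$ under extensions.

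**Main obstacle.** There is essentially no serious obstacle here; the statement is a soft consequence of two facts already in hand: $\theta_i$ is supported on the locally free module $E_i$, and $\repvp(H)$ is extension-closed. The only point requiring a small amount of care is the bookkeeping of the induction — making sure that "$g(X/Y) \neq 0 \Rightarrow X/Y$ locally free" is legitimately the inductive hypothesis for the strictly smaller dimension vector $\bd' = \bd - \be_{i_1}$, and that the spanning set of $\M_\bd$ by products of the $\theta_i$ is the right object to induct on. One should also remark that $X/Y$ automatically has dimension vector $\bd'$ on the support of the integrand, so the inductive hypothesis does apply verbatim.
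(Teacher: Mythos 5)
Your proof is correct and rests on exactly the same two facts as the paper's: $\theta_i$ is supported on $E_i$, and $\repvp(H)$ is closed under extensions \cite[Lemma 3.8]{GLS1}. The paper's version is slightly more direct — rather than inducting by peeling off one $\theta_{i_1}$ at a time, it expands the full product $(\theta_{i_1}*\cdots*\theta_{i_k})(X)$ at once as the Euler characteristic of the variety of flags with successive subquotients $E_{i_j}$, and observes this variety is empty for non-locally-free $X$ by extension-closure — but the content is identical.
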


\begin{proof}
For an $H$-module $X$ and a sequence $\bi = (i_1,\ldots,i_k)\in \{1,\ldots,n\}^k$,
we have, by definition of the convolution product $*$, 
\begin{equation}\label{eq.product}
 (\theta_{i_1}*\cdots*\theta_{i_k})(X) = \chi(\cFl_{X,\bi}),
\end{equation}
where $\cFl_{X,\bi}$ is the constructible set of flags 
\[
(0=X_0\subset X_1 \subset \cdots \subset X_k=X) 
\]
of $H$-modules such that
$X_j/X_{j-1} \cong E_{i_j}$ for all $1\le j\le k$.
By \cite[Lemma 3.8]{GLS1} the category of locally free $H$-modules is closed under extensions,
hence if $X$ is not locally free we have $\cFl_{X,\bi} = \varnothing$ for every sequence $\bi$. 
This shows that $(\theta_{i_1}*\cdots*\theta_{i_k})(X) = 0$ for every sequence $\bi$, and
thus, by definition of $\M$, that $f(X)=0$ for every $f\in\M$.
\end{proof}

\begin{remark}
{\rm
When the Cartan matrix $C$ is symmetric and $D$ is the identity matrix, the algebra 
$H$ is the path algebra $\C Q^\circ$ (see Section~\ref{subsec-defH}). In that case
$\M(H)$ coincides with the algebra $R^+(\C Q^\circ)$ of \cite[Section~2.3]{S}, and with the algebra $\M_0(\Omega)$
of \cite[Section~10.19]{Lu1}.
}
\end{remark}

\subsection{Varieties of locally free $H$-modules}
Let $\bd = (d_1,\ldots,d_n) \in \N^n$ be a dimension vector.
If $M\in \rep(H,\bd)$ is locally free, its rank vector is 
$\br = (r_1,\ldots,r_n)$ where $r_i = d_i/c_i$. 
Hence locally free modules can 
only exist if $d_i$ is divisible by $c_i$ for every $i$.
In this case we say that $\bd$ is $\bc$-divisible.
Let $\repvp(H,\br)$ be the union of all $G_\bd$-orbits $\cO_M$ of locally free
modules $M$ with rank vector $\br$.
By Lemma~\ref{lem-support}, the support of every constructible function
$f \in \M_\bd$ is contained in $\repvp(H,\br)$.

\begin{Prop}[{\cite[Proposition~3.1]{GLS2}}]\label{fibredim1}
Let $\bd = (d_1,\ldots,d_n)$ be $\bc$-divisible as above. 
Set $r_i := d_i/c_i$ and  $\br = (r_1,\ldots,r_n)$. 
Then  $\repvp(H,\br)$ is a non-empty open subset of $\rep(H,\bd)$. 
Furthermore,
$\repvp(H,\br)$ is smooth and irreducible of dimension
$\dim(G_\bd) - q_H(\br)$.
\end{Prop}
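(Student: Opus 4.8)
The plan is to establish the three assertions in the order: openness, smoothness, dimension count; the smoothness is the part requiring the most care, since it is exactly where the Iwanaga-Gorenstein property of $H$ enters. First I would note that the condition on a point $M \in \rep(H,\bd)$ of being locally free is equivalent, by \cite[Theorem 1.1]{GLS1}, to $\pdim(M) \le 1$, and also to each $M(\vep_i)$ being a nilpotent operator of rank vector exactly $\br$ in the sense that $\dim\Ima(M(\vep_i)^k) = (c_i-k)r_i$ for $0 \le k \le c_i$; equivalently, $M(\vep_i)$ has a single Jordan type, namely $r_i$ blocks of size $c_i$. For a fixed $\vep_i$-part this is the condition of lying in a single (maximal) nilpotent orbit closure complement — the locus where the ranks of all powers are maximal — which is open. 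Hence $\repvp(H,\br)$ is cut out inside $\rep(H,\bd)$ by the finitely many open conditions $\rk(M(\vep_i)^k) \ge (c_i-k)r_i$, so it is open. Non-emptiness follows because the locally free module $E_1^{r_1}\oplus\cdots\oplus E_n^{r_n}$ has rank vector $\br$, and one must only check that $\rep(H,\bd)$ is non-empty, which is clear.

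For smoothness and the dimension count I would use the standard tangent-space/deformation-theory computation for module varieties. For $M \in \rep(H,\bd)$ one has the exact sequence relating the tangent space of $\rep(H,\bd)$ at $M$, the tangent space of the orbit $\cO_M$, and $\Ext$-groups; precisely, for an algebra $H = \C Q/I$ presented by a quiver with relations, there is a complex whose cohomology at the relevant spots computes $\End_H(M)$, $\Ext^1_H(M,M)$ and an obstruction space controlled by the relations. The key point is: when $M$ is locally free, i.e.\ $\pdim_H(M) \le 1$, the module $M$ admits a length-$1$ projective resolution $0 \to P_1 \to P_0 \to M \to 0$, so $\Ext^2_H(M,M) = 0$. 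This makes $\rep(H,\bd)$ \emph{smooth at $M$}: the obstruction space vanishes, so the scheme-theoretic tangent space has dimension $\dim\rep(H,\bd)_M = \dim_\C\End_H(M) + \dim_\C\Ext^1_H(M,M) - \dim_\C\Hom_\C(\text{something}) $, and more usefully the local dimension equals $\dim G_\bd - \dim\End_H(M) + \dim\Ext^1_H(M,M)$. Now invoke that $\bil{M,M}_H = \dim\Hom_H(M,M) - \dim\Ext^1_H(M,M)$ depends only on $\rkv(M) = \br$ and equals $q_H(\br)$ by \cite[Proposition 4.1]{GLS1} and the formula in Section~\ref{subsec-loc-free}; hence $\dim G_\bd - \dim\End_H(M) + \dim\Ext^1_H(M,M) = \dim G_\bd - q_H(\br)$, independent of the chosen locally free $M$. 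Since $\repvp(H,\br)$ is an open subvariety of $\rep(H,\bd)$ and $\rep(H,\bd)$ is smooth at every point of it with constant local dimension $\dim G_\bd - q_H(\br)$, the variety $\repvp(H,\br)$ is smooth and equidimensional of that dimension.

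It remains to prove \emph{irreducibility}, which I expect to be the main obstacle and which does not follow formally from smoothness plus constant dimension. The natural approach is to show that the open dense orbit locus is connected, or rather to exhibit a dense subset contained in the closure of a single orbit — but generically there need not be a dense orbit. Instead I would argue via the forgetful morphism $\pi \colon \repvp(H,\br) \to \prod_i \{N_i \in \rep(\C[x]/(x^{c_i}), d_i) : N_i \text{ has rank vector } r_i\}$ remembering only the $\vep_i$-actions; the base is a product of (open subsets of) nilpotent orbit closures, each of which is a single $\GL_{d_i}$-orbit, hence smooth and irreducible. The fibres of $\pi$ are the spaces of tuples $(M(\alp_{ij}^{(g)}))$ subject to the linear relations (H2) $M(\vep_i)^{f_{ji}}M(\alp_{ij}^{(g)}) = M(\alp_{ij}^{(g)})M(\vep_j)^{f_{ij}}$ — these are affine spaces (solution sets of linear equations) of dimension independent of the point in the base, by a rank computation on the linear map $X \mapsto M(\vep_i)^{f_{ji}}X - X M(\vep_j)^{f_{ij}}$ which has constant rank when both $M(\vep_i)$ and $M(\vep_j)$ are regular of the prescribed Jordan type. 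A morphism with irreducible base and irreducible equidimensional fibres of constant dimension, over a base which is moreover smooth, has irreducible total space; this yields irreducibility of $\repvp(H,\br)$. The delicate point is verifying that the fibre dimension is genuinely constant — that is, that the linear map defining (H2) has locally constant rank over the base — which I would reduce to an explicit linear-algebra computation with a single regular nilpotent Jordan block, exploiting the $H_i$-freeness of ${}_iH_j$ recorded in Section~\ref{subsec-modulation}; once the generic fibre dimension is pinned down, consistency with the already-established total dimension $\dim G_\bd - q_H(\br)$ provides a useful cross-check.
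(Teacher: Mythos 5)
The paper itself gives no proof of this proposition; it is quoted verbatim from \cite[Proposition~3.1]{GLS2}, so there is no in-paper argument to compare against. That said, your openness argument (rank conditions on the powers $M(\vep_i)^k$, plus non-emptiness via $E_1^{r_1}\oplus\cdots\oplus E_n^{r_n}$) is correct, and the fibration $\pi\df\repvp(H,\br)\to\prod_i\cO_i$ forgetting the arrow components is the right route — in fact it already gives smoothness, irreducibility \emph{and} the dimension formula in one go, because $\pi$ is $G_\bd$-equivariant onto a single $G_\bd$-orbit with linear fibers of constant dimension, hence a vector bundle; the total space is then smooth and irreducible of dimension $\dim\prod_i\cO_i+\dim(\text{fiber})$. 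This makes the deformation-theoretic first pass redundant, which is fortunate because as written it has a literal ``$\text{something}$'' placeholder, and the step ``$\Ext^2_H(M,M)=0$ forces $\rep(H,\bd)$ smooth at $M$'' is true but needs a citation. Two small imprecisions in the fibration part you should fix: (i) ``irreducible base with irreducible equidimensional fibers implies irreducible total space'' is false in general — appeal instead to the vector-bundle structure provided by $G_\bd$-equivariance (or to flatness of $\pi$); (ii) the relevant Jordan types are $(c_i^{r_i})$, i.e.\ $r_i$ blocks each of size $c_i$, not a ``single regular nilpotent Jordan block''. For the dimension, you should actually compute the fiber dimension rather than using the deformation argument as a cross-check: by the freeness of ${}_iH_j$ the fiber over any base point is $\prod_{(i,j)\in\Omega}\Hom_{H_i}({}_iH_j\otimes_{H_j}M_j,M_i)$, of $\C$-dimension $\sum_{(i,j)\in\Omega}c_i|c_{ij}|r_ir_j$, while $\dim\prod_i\cO_i=\dim G_\bd-\sum_ic_ir_i^2$; adding these and comparing with the explicit formula for $\bil{-,-}_H$ in Section~\ref{subsec-loc-free} gives exactly $\dim G_\bd-q_H(\br)$.
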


\subsection{Bialgebra structure of $\M$}
Consider the direct product of algebras $H\times H$. Modules for $H\times H$ are pairs $(X_1,X_2)$
of modules for $H$. 
An $H\times H$-submodule of $(X_1,X_2)$ is a pair $(Y_1,Y_2)$ where $Y_1$ is an $H$-submodule of $X_1$
and $Y_2$ is an $H$-submodule of $X_2$. 
Note that we can regard $H\times H$ as the algebra 
$H(C \oplus C, D \oplus D, \Omega \oplus \Omega)$,
where $C \oplus C$ (resp. $D \oplus D$) means the block diagonal matrix with two diagonal blocks equal to $C$ (resp. $D$), and
$\Omega \oplus \Omega$ is the obvious orientation of $C \oplus C$ induced by the orientation $\Omega$ of $C$.
Therefore we can define as above a convolution algebra $\F(H\times H)$.

We have an algebra embedding of $\F(H)\otimes \F(H)$ into $\F(H \times H)$
by setting
\[
 (f\otimes g)(X,Y) = f(X)g(Y).
\]
Following Ringel \cite{R1} (see also \cite[Section~4.3]{BT}), one introduces a map
$$
c\df \F(H) \to \F(H\times H)
$$ 
by
$c(f)(X,Y) = f(X\oplus Y)$.

\begin{Prop}\label{prop-com}
The map $c$ restricts to an algebra homomorphism
$$
c\df \M(H) \to \M(H) \otimes \M(H)
$$
for the convolution product
such that 
\[
c(\theta_i) = \theta_i\otimes 1 + 1\otimes \theta_i.
\]
This makes $\M(H)$ into a cocommutative bialgebra.
\end{Prop}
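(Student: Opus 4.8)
The plan is to verify that $c$ is a morphism of algebras, that it lands in the subalgebra $\M(H)\otimes\M(H)$ (after identifying the latter with its image in $\F(H\times H)$), that it satisfies the stated formula on the generators $\theta_i$, and finally that cocommutativity follows formally. The coassociativity and counit axioms will follow from the associativity and unitality of the direct sum $\oplus$ on modules, exactly as in Ringel's original argument.

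First I would show that $c\df\F(H)\to\F(H\times H)$ is an algebra homomorphism for the convolution products, i.e. $c(f*g)=c(f)*c(g)$. Unwinding the definitions, $(c(f*g))(X,Y)=(f*g)(X\oplus Y)=\int_{W\subseteq X\oplus Y} f(W)\,g((X\oplus Y)/W)\,d\chi$, while $(c(f)*c(g))(X,Y)=\int_{Y_1\subseteq X,\,Y_2\subseteq Y} f(Y_1)g(X/Y_1)\,f(Y_2)g(Y/Y_2)\,d\chi$ — wait, more precisely $\int_{(U,V)\subseteq(X,Y)} c(f)(U,V)\,c(g)((X,Y)/(U,V))\,d\chi = \int f(U\oplus V)\,g((X/U)\oplus(Y/V))\,d\chi$. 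The point is that the obvious map from the Grassmannian of pairs $\{(U,V):U\subseteq X,\ V\subseteq Y\}$ to the Grassmannian of submodules $W\subseteq X\oplus Y$, $(U,V)\mapsto U\oplus V$, has image whose complement is dealt with by a fibration-with-affine-space-fibres argument à la Riedtmann--Ringel: the relevant statement is that, for fixed $W$, the variety of pairs $(U,V)$ with $U\oplus V\cong W$ (inside $X\oplus Y$, in a compatible way) has Euler characteristic equal to the product of Euler characteristics of the relevant varieties of $U$ and $V$. This is the standard Green/Ringel comultiplication compatibility; I would cite \cite{R1} and \cite[Section~4.3]{BT} and reproduce only the key Euler-characteristic identity.

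Next, the formula $c(\theta_i)=\theta_i\otimes 1+1\otimes\theta_i$: evaluating $c(\theta_i)$ at $(X,Y)$ gives $\theta_i(X\oplus Y)$, which is $1$ exactly when $X\oplus Y\cong E_i$. Since $E_i$ is indecomposable (it is the uniserial module $H_i$ of rank vector $\alpha_i$), this forces either $X\cong E_i,\ Y=0$ or $X=0,\ Y\cong E_i$, so $\theta_i(X\oplus Y)=\theta_i(X)\bun_\M(Y)+\bun_\M(X)\theta_i(Y)=(\theta_i\otimes\bun_\M+\bun_\M\otimes\theta_i)(X,Y)$, which is the claim. In particular $c(\theta_i)\in\M(H)\otimes\M(H)$, and since $c$ is an algebra homomorphism and the $\theta_i$ generate $\M(H)$, we conclude $c(\M(H))\subseteq\M(H)\otimes\M(H)$; thus $c$ restricts to an algebra homomorphism $\M(H)\to\M(H)\otimes\M(H)$. (Here I should also note that $\F(H)\otimes\F(H)\to\F(H\times H)$ is injective, so the identification is legitimate; this is routine since a tensor of constructible functions is determined by its values on product modules, which separate the factors.)

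The main obstacle is the Euler-characteristic computation in the first step — checking that passing from submodules $W$ of $X\oplus Y$ to pairs $(U,V)$ with $U\subseteq X$, $V\subseteq Y$ does not change Euler characteristics, i.e. the ``key lemma of Green/Riedtmann'' in the constructible-function formulation. Concretely one stratifies the Grassmannian of submodules of $X\oplus Y$ by isomorphism type and uses that each stratum, fibred over the corresponding product of strata for $X$ and $Y$ via $(U,V)\mapsto U\oplus V$, has fibres that are iterated extensions of affine spaces (hence Euler characteristic $1$), so $\chi$ is multiplicative along the fibration. Everything else is bookkeeping: coassociativity of $c$ follows from the natural isomorphism $(X\oplus Y)\oplus Z\cong X\oplus(Y\oplus Z)$, the counit is $f\mapsto f(0)$ using $X\oplus 0\cong X$, and cocommutativity of $c$ follows from the symmetry $X\oplus Y\cong Y\oplus X$ together with the fact that the flip on $\F(H)\otimes\F(H)$ corresponds to the flip on $\F(H\times H)$. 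Hence $\M(H)$ is a cocommutative bialgebra, as asserted.
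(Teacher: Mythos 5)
Your treatment of the formula $c(\theta_i) = \theta_i\otimes 1 + 1\otimes\theta_i$ (via indecomposability of $E_i$) and the subsequent deduction that $c(\M(H))\subseteq\M(H)\otimes\M(H)$ because $c$ is multiplicative and the $\theta_i$ generate $\M(H)$ agree with the paper. The point of divergence is the key step: proving $c$ is an algebra homomorphism. Here the paper uses a $\C^*$-action on $X\oplus Y$, $\la\cdot(x,y)=(\la x,y)$, inducing an action on the Grassmannian of submodules whose fixed points are exactly the split submodules $Z_1\oplus Z_2$; since $\la\cdot Z\cong Z$, the integrand $Z\mapsto f(Z)g((X\oplus Y)/Z)$ is $\C^*$-invariant, and the Bia\l ynicki-Birula theorem (their reference \cite{BB}) reduces the integral to the fixed-point locus in one stroke.

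Your fibration argument has a genuine gap. The map $(U,V)\mapsto U\oplus V$ from pairs of submodules of $X$ and $Y$ to submodules of $X\oplus Y$ is \emph{injective} (if $W=U\oplus V$ then $U=W\cap X$ and $V=W\cap Y$), so it has no fibres to be ``iterated extensions of affine spaces.'' What does have affine-space fibres is the map in the other direction, $W\mapsto (W\cap X,\,\pi_Y(W))$: the fibre over $(U,V)$ is a torsor under $\Hom_H(V,X/U)$. But here the integrand is \emph{not} constant along a fibre, because that fibre parametrises all extensions of $V$ by $U$ sitting inside $X\oplus Y$, and a non-split $W$ need not satisfy $f(W)=f(U\oplus V)$. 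So ``$\chi$ is multiplicative along the fibration'' does not finish the argument; one needs the extra input that the non-split $W$'s contribute zero, and that is exactly what the free $\C^*$-action (equivalently the contracting action on each $\Hom_H(V,X/U)$-torsor) plus $\C^*$-invariance of the integrand provides. The ``Green/Riedtmann key lemma'' you appeal to belongs to the Hall-algebra-over-$\FF_q$ setting with Green's extension-counting comultiplication and does not transfer verbatim to this $\chi$-setting and to the simpler comultiplication $c(f)(X,Y)=f(X\oplus Y)$. You should replace the fibration heuristic by the $\C^*$-action argument, or by an equivalent explicit argument that $\C^*$-invariant constructible sets away from the fixed locus have vanishing Euler characteristic.
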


\begin{proof} 
We first show that $c$ is a homomorphism from $\F(H)$ to $\F(H\times H)$ 
for the convolution product.
Indeed, on the one hand we have for $f,g\in\F(H)$
\begin{equation}\label{int1}
(c(f*g))(X,Y) = \int_{Z\subseteq X\oplus Y} f(Z)\, g((X\oplus Y)/Z)\, d\chi,
\end{equation}
and on the other hand
\begin{equation}\label{int2}
(c(f)*c(g))(X,Y) = \int_{Z_1\subseteq X,\,Z_2\subseteq Y} f(Z_1\oplus Z_2)\, g((X\oplus Y)/(Z_1\oplus Z_2))\, d\chi. 
\end{equation}
To show that the two integrals are the same, we consider the $\C^*$-action on $X\oplus Y$ given by 
\[
 \la\cdot(x,y) = (\la x,y),\qquad (x\in X,\ y\in Y,\ \la\in\C^*).
\]
This induces a $\C^*$-action on the variety of submodules $Z$ of $X\oplus Y$, whose fixed points
are exactly the submodules of the form $Z=Z_1\oplus Z_2$ with $Z_1\subseteq X$ and $Z_2\subseteq Y$.
Moreover, for a submodule $Z$ of $X\oplus Y$ and $\la\in\C^*$, the $H$-module $\la\cdot Z$ is isomorphic
to $Z$, so for every $f\in\F(H)$ we have $f(\la\cdot Z)= f(Z)$.
Thus, using \cite[Corollary~2]{BB}, we get that
(\ref{int1}) and (\ref{int2}) are equal.
It follows that $c$ restricts to an algebra homomorphism $\M(H) \to \F(H\times H)$.
Since $E_i$ is indecomposable, we have 
\[
 c(\theta_i)(X,Y) = \theta_i(X\oplus Y) 
 = 
  \left\{
 \begin{array}{ll}
 1 & \mbox{if $X\cong E_i$ and $Y=0$, or $X=0$ and $Y\cong E_i$,} \\
 0 & \mbox{otherwise.}
 \end{array}
 \right.
\]
Thus $c(\theta_i)$ can be identified with 
$\theta_i\otimes 1 + 1\otimes \theta_i \in \M(H)\otimes\M(H)\subset \cF(H) \otimes \cF(H) \subset \cF(H \times H)$. 
Finally, since $\M(H)$ is generated by the $\theta_i$'s and $c$ is multiplicative, 
this implies that the image $c(\M(H))$ is indeed contained in
$\M(H)\otimes\M(H)$.
\end{proof}

An element $f$ of $\M$ is called \emph{primitive} if $c(f) = f\otimes1 + 1\otimes f$.

\begin{Lem}\label{lem-indec}
An element $f$ of $\M$ is primitive if and only if $f$ is supported only on indecomposable 
modules.
\end{Lem}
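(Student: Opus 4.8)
The key is to compute the ``reduced coproduct'' $\bar c(f) := c(f) - f\otimes 1 - 1\otimes f$ evaluated at a pair $(X,Y)$ and to see how it interacts with direct-sum decompositions. First I would recall that for any $f\in\M$ and any module $Z$, the value $c(f)(X,Y)=f(X\oplus Y)$, so $f$ is primitive precisely when $f(X\oplus Y) = f(X) + f(Y)$ for \emph{all} pairs $(X,Y)\in\rep(H)\times\rep(H)$. From this identity the ``only if'' direction is immediate: if $Z \cong Z' \oplus Z''$ is a nontrivial direct-sum decomposition with both summands nonzero, then applying primitivity gives $f(Z) = f(Z') + f(Z'')$; iterating over a decomposition into indecomposables $Z \cong \bigoplus_{\ell} Z_\ell$ and using $f(0) = 0$ (which holds since $\bun_\M$ is the characteristic function of the zero module and $f$ lies in positive degree, or more simply since $f(0\oplus 0)=2f(0)$) yields $f(Z) = \sum_\ell f(Z_\ell)$, and in particular $f(Z) = 0$ whenever $Z$ has at least two indecomposable summands, i.e. $f$ is supported on indecomposables.

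For the ``if'' direction, suppose $f$ is supported only on indecomposable modules; I want $f(X\oplus Y) = f(X) + f(Y)$ for all $X,Y$. If both $X$ and $Y$ are nonzero, then $X\oplus Y$ is decomposable, so $f(X\oplus Y) = 0$ by hypothesis; on the other hand $f(X)$ and $f(Y)$ need not vanish individually, so I cannot conclude naively --- the identity I want is $0 = f(X) + f(Y)$, which is false in general. This shows the bare set-theoretic reading is too weak, and the genuine content of the lemma must use that the \emph{coproduct lands in} $\M\otimes\M$ (Proposition~\ref{prop-com}) together with the grading. So the real argument is: write $c(f) = \sum_{\bd'+\bd''=\bd} c(f)_{\bd',\bd''}$ as a sum of its bihomogeneous components, each lying in $\M_{\bd'}\otimes\M_{\bd''}$. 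The component with $\bd'=0$ is $1\otimes f$ and the one with $\bd''=0$ is $f\otimes 1$; primitivity is equivalent to the vanishing of all \emph{mixed} components $c(f)_{\bd',\bd''}$ with $\bd',\bd''\neq 0$. Now evaluate such a mixed component: for $X$ of dimension vector $\bd'$ and $Y$ of dimension vector $\bd''$, $c(f)_{\bd',\bd''}(X,Y) = f(X\oplus Y)$. Thus the mixed components all vanish iff $f(X\oplus Y)=0$ whenever $X\ne 0\ne Y$, which is exactly the statement that $f$ vanishes on every module that admits a direct-sum decomposition into two nonzero pieces --- equivalently, $f$ is supported on indecomposables. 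This closes the loop.

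\textbf{Main obstacle.} The subtlety I expect to spend the most care on is precisely the interplay I flagged above: the naive pointwise condition ``$f(X\oplus Y)=f(X)+f(Y)$ for all $X,Y$'' is \emph{not} what one should verify, because it would force $f(X)=-f(Y)$ on decomposables, an absurdity. The correct formulation uses that $c(f)$, being an element of $\M\otimes\M$, decomposes canonically along the $\N^n\times\N^n$-grading, and primitivity means that in this canonical tensor decomposition only the two ``extreme'' terms $f\otimes 1$ and $1\otimes f$ survive. One must therefore argue at the level of graded components, not pointwise values, and identify the evaluation of a mixed graded component $c(f)_{\bd',\bd''}$ with $(X,Y)\mapsto f(X\oplus Y)$ restricted to $\dimv(X)=\bd'$, $\dimv(Y)=\bd''$. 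Once this bookkeeping is set up correctly, the equivalence with ``$f$ supported on indecomposables'' is a short direct-sum argument together with the Krull--Schmidt theorem applied to finite-dimensional $H$-modules. I would also remark that $f(0)=0$ for $f\in\M$ of positive degree, which handles the degenerate cases $X=0$ or $Y=0$ and lets the decomposition into indecomposables be carried out cleanly.
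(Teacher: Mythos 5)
Your argument ultimately lands in the right place, but it starts from a miscomputation that then causes you to perceive a ``main obstacle'' which is not actually there, and your first-paragraph treatment of the ``only if'' direction contains a non-sequitur.

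The error is in the sentence ``$f$ is primitive precisely when $f(X\oplus Y)=f(X)+f(Y)$ for all pairs $(X,Y)$.'' This is not the pointwise translation of primitivity. The unit $\bun_\M$ is the characteristic function of the zero module, so
\[
(f\otimes\bun_\M+\bun_\M\otimes f)(X,Y)=f(X)\,\bun_\M(Y)+\bun_\M(X)\,f(Y),
\]
which equals $0$ whenever both $X$ and $Y$ are nonzero, equals $f(X)$ when $Y=0$, and equals $2f(0)$ when $X=Y=0$. With the correct evaluation, the lemma is literally a one-liner --- which is exactly what the paper's proof says: $c(f)(X,Y)=f(X\oplus Y)$ equals the right-hand side for all $(X,Y)$ if and only if $f(0)=0$ and $f(X\oplus Y)=0$ for all nonzero $X,Y$, i.e.\ $f$ is supported on indecomposables. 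There is no obstacle and no need to decompose $Z$ into indecomposable summands.

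Concretely: once you miswrite the condition as $f(X\oplus Y)=f(X)+f(Y)$, the step ``\,$f(Z)=\sum_\ell f(Z_\ell)$, and in particular $f(Z)=0$ whenever $Z$ has at least two indecomposable summands\,'' does not follow --- the sum $\sum_\ell f(Z_\ell)$ has no reason to vanish. And the ``main obstacle'' you flag for the ``if'' direction ($0=f(X)+f(Y)$ being false) is caused by the same miscomputation, not by a genuine subtlety in the lemma. Your graded-component argument in the second paragraph is a correct way to repair the damage (restricting $c(f)$ to the piece of bidegree $(\bd',\bd'')$ and comparing it with $f\otimes\bun_\M+\bun_\M\otimes f$, which concentrates on the extreme bidegrees), and with the added remark on the degree-$(0,0)$ component it does establish the equivalence; but it is doing work that, with the correct pointwise formula, is entirely unnecessary. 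I would recommend replacing the whole argument by the direct evaluation above.
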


\begin{proof} This follows immediately from  the equality $f(X\oplus Y) = c(f)(X,Y)$.
\end{proof}

It is easy to see that if $f$ and $g$ are primitive, then the Lie bracket
\[
 [f,g]:=f*g-g*f
\]
is also primitive. Hence the subspace $\PP(\M) \subset \M$ of primitive elements has a natural Lie algebra structure.

\begin{Prop}\label{prop-M-Hopf}
$(\M,*,c)$ is a Hopf algebra isomorphic to the universal enveloping algebra $U(\PP(\M))$. 
\end{Prop}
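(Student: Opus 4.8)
The plan is to identify $\M$ as a graded, connected, cocommutative Hopf algebra over $\C$ and then invoke the Milnor–Moore theorem, which asserts that any such Hopf algebra is isomorphic to the universal enveloping algebra of its Lie algebra of primitive elements. First I would observe that $\M = \bigoplus_{\bd \in \N^n} \M_\bd$ is $\N^n$-graded, hence in particular $\N$-graded by total dimension, with $\M_0 = \C\bun_\M$ one-dimensional; this is the connectedness. By Proposition~\ref{prop-com}, $c$ is a coassociative (this needs a one-line check, or follows from $c(f)(X,Y,Z) = f(X\oplus Y\oplus Z)$ read two ways) and cocommutative comultiplication, graded because $c(\M_\bd) \subseteq \bigoplus_{\be+\be'=\bd}\M_\be\otimes\M_{\be'}$. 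The counit is evaluation at the zero module, $\varepsilon(f) = f(0)$, and one checks the counit axioms directly from $f(X\oplus 0)=f(X)$.

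The one genuinely structural point is the existence of an antipode, i.e. that $\M$ is actually a Hopf algebra and not merely a bialgebra. The standard fact here is that any graded connected bialgebra automatically admits an antipode: one defines $S$ recursively on the grading by $S(\bun_\M)=\bun_\M$ and, for $f$ homogeneous of positive degree with reduced coproduct $\bar{c}(f) = c(f) - f\otimes\bun_\M - \bun_\M\otimes f = \sum f'\otimes f''$, one sets $S(f) = -f - \sum S(f')\,f''$, which is well-defined because each $f', f''$ has strictly smaller degree. I would cite this as a well-known lemma (e.g. from the standard references on Hopf algebras, or the Milnor–Moore paper itself) rather than reproving it. Since $\C$ has characteristic zero, the Milnor–Moore / Cartier–Kostant structure theorem applies verbatim: the natural map $U(\PP(\M)) \to \M$ sending a primitive element to itself is an isomorphism of Hopf algebras. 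Here $\PP(\M)$ carries the Lie bracket $[f,g] = f*g - g*f$, which preserves primitivity as noted just before the statement.

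Alternatively, if one prefers to stay closer to the hands-on style of the paper and avoid invoking Milnor–Moore, I would instead argue: by the remark following Corollary~\ref{cor-phi} (anticipated in the excerpt) there is a surjective bialgebra map $U(\n) \twoheadrightarrow \M$; since $U(\n)$ is a Hopf algebra and the map is a surjection of bialgebras commuting with the (uniquely determined, in the graded connected setting) antipodes, $\M$ inherits a Hopf structure, and the generators $\theta_i$ are primitive, so $\eta_H$ carries the Chevalley generators $e_i$ of $\n$ to primitive elements. Then $\eta_H$ restricts to a Lie algebra map $\n \to \PP(\M)$, inducing $U(\n) \to U(\PP(\M)) \to \M$ whose composite is $\eta_H$; a PBW-type degree count, together with cocommutativity and connectedness, forces $U(\PP(\M)) \to \M$ to be an isomorphism.

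The main obstacle, in either route, is really just bookkeeping: verifying coassociativity and the counit axioms is routine once one writes $c^{(2)}(f)(X,Y,Z) = f(X\oplus Y\oplus Z)$, and the antipode either comes for free from gradedness or is transported from $U(\n)$. The substantive input — that $\M$ is generated by primitives and is cocommutative — has already been established in Proposition~\ref{prop-com}. So I expect the proof to be short, with the only delicate choice being whether to quote Milnor–Moore directly or to leverage the existing surjection from $U(\n)$; I would quote Milnor–Moore, since it is the cleanest and the characteristic-zero hypothesis is in force.
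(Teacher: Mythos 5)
Your proof is correct and in substance the same as the paper's: both reduce to applying the Cartier--Kostant--Milnor--Moore structure theorem for cocommutative Hopf algebras over a characteristic-zero field, and both ultimately use the $\N^n$-grading of $\M$ with $\M_0 = \C\bun_\M$ to establish the requisite connectedness/irreducibility. The only cosmetic difference is in packaging: you argue "graded connected $\Rightarrow$ antipode exists, then Milnor--Moore", whereas the paper (following Ringel) first shows that $\bun_\M$ is the \emph{unique group-like element} — using the same grading-based contradiction you gesture at — and then cites Sweedler's Lemma~8.0.1, Theorem~9.2.2 (to get irreducibility, hence a Hopf algebra) and Theorem~13.0.1 (the structure theorem). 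Your first route is the cleaner one to write down; your second route (transporting the Hopf structure from $U(\n)$) would be circular in the paper's logical order, since Corollary~\ref{cor-phi} uses Proposition~\ref{prop-M-Hopf}.
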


\begin{proof}
A nonzero element $f$ of $\M$ is \emph{group-like} if $c(f)=f\otimes f$.
Arguing as in \cite[Section~4.5]{BT}, we see that the only group-like element is the identity element $\bun_\M$.
Indeed, if $f$ is group-like 
for any $H$-module~$X$ and $k\in\N$ we have $f(X^k) = f(X)^k$.
If $f(X) \not = 0$ for a module $X \not = 0$, then the decomposition of $f$ with respect
to the direct sum $\bigoplus_{\bd}\M_\bd$ has infinitely many nonzero components, a contradiction.  
Hence we have $f=\la\bun_\M$ for some scalar $\la \not= 0$.
Since $f(0 \oplus 0) = f(0)f(0)$, we get that $\la = \la^2$.
This implies $f= \bun_\M$.  

Therefore, we can repeat the last part of the proof of \cite[Theorem]{R1}:
Since $\M$ is a cocommutative coalgebra over $\C$, 
\cite[Lemma 8.0.1]{Sw} together with the previous paragraph implies
that $\cM$ is irreducible, hence a Hopf algebra \cite[Theorem 9.2.2]{Sw}. It then follows from \cite[Theorem 13.0.1]{Sw} that 
$\M$ is isomorphic as a Hopf algebra to the universal enveloping algebra $U(\PP(\M))$ of the Lie algebra $\PP(\M)$.
\end{proof}

\begin{remark}
{\rm
When the Cartan matrix $C$ is symmetric and $D$ is the identity matrix, 
the Lie algebra $\PP(\M)$ coincides with the Lie algebra $L^+(\C Q^\circ)=L^+(Q^\circ)$ of \cite[Section~2.6]{S}.
}
\end{remark}

\subsection{Relations in $\M$}\label{sec-relations}
For $f\in\M$ we denote by $\ad f$ the endomorphism of $\M$ defined by 
\[
\ad f(g) := [f,g],\qquad (g\in\M).
\]

\begin{Prop}\label{prop-relations}
 The generators $\theta_1,\ldots,\theta_n$ of $\M$ satisfy the relations
\[
(\ad \theta_i)^{1-c_{ij}}(\theta_j) = 0
\]
for all $i \not= j$.
\end{Prop}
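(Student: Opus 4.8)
The plan is to prove the Serre relation by a direct computation of the convolution product evaluated on locally free modules, using the interpretation of $\theta_{i_1}*\cdots*\theta_{i_k}$ as an Euler characteristic of a variety of filtrations with prescribed subquotients (equation~\eqref{eq.product}). Since $\M$ is $\N^n$-graded, the element $(\ad\theta_i)^{1-c_{ij}}(\theta_j)$ lives in $\M_\bd$ for $\bd = (1-c_{ij})\be_i + \be_j$, so by Lemma~\ref{lem-support} it suffices to evaluate it on locally free modules $X$ with rank vector $\br = (1-c_{ij})\alp_i + \alp_j$. First I would expand $(\ad\theta_i)^{1-c_{ij}}(\theta_j)$ into a signed sum of products $\theta_i^{*a}*\theta_j*\theta_i^{*b}$ with $a+b = 1-c_{ij}$, and reduce to understanding, for each such $X$ and each arrangement of the factors, the Euler characteristic of the flag variety $\cFl_{X,\bi}$.

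The key reduction is that everything takes place ``locally at the arrow $i\to j$ or $j\to i$''. More precisely, a locally free $X$ with this rank vector is essentially a representation of the generalized modulated subgraph on vertices $\{i,j\}$ (the other vertices contribute nothing since the rank vector is supported there), so $X$ is built from $E_i$'s and $E_j$'s glued by a single bimodule map; up to isomorphism the indecomposables of this rank are controlled by the Auslander--Reiten theory / the classification of $\repvp$-modules for rank-two Cartan data. The plan is then to treat the two orientations separately: if $(j,i)\in\Omega$ there are no nonzero maps $E_i\to E_j$ relevant in one direction, and vice versa, which makes many of the flag varieties empty and forces $\theta_i^{*a}*\theta_j*\theta_i^{*b}(X)$ to vanish unless the factors are in a compatible order. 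Concretely I expect to show that for each indecomposable $X$ of rank $\br$, only the ``boundary'' terms with $b=0$ (or $a=0$) contribute, and a comparison of Euler characteristics of Grassmannians of subspaces — the flags of type $(i,\ldots,i,j,i,\ldots,i)$ correspond to choosing a flag of $H_i$-subspaces compatible with the bimodule map — yields that the signed sum telescopes to zero. For decomposable $X$ one invokes the coalgebra structure (Lemma~\ref{lem-indec}): since $\ad$ of a primitive preserves primitivity and $\theta_i$ is primitive, $(\ad\theta_i)^{1-c_{ij}}(\theta_j)$ is primitive, hence supported only on indecomposables, so it is enough to treat the indecomposable $X$.

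An alternative, cleaner route — and probably the one I would actually pursue — is to verify the relation after restricting to the subalgebra generated by $\theta_i,\theta_j$ alone, i.e.\ to reduce to the rank-two Cartan matrix $C' = \bsm 2 & c_{ij} \\ c_{ji} & 2\esm$ with symmetrizer $D' = \diag(c_i,c_j)$ and the induced orientation. The point is that $\M(H)$ for the rank-two datum is governed by a Dynkin or affine type-$A_1^{(1)}$ quiver-with-relations, where the full list of locally free indecomposables and their submodule structures is explicitly known (this is exactly the kind of computation carried out in \cite[Section~13]{GLS1} and which underlies Section~\ref{sec-primitive}). There one can exhibit the finitely many isoclasses of indecomposable locally free $X$ of rank $\br$, compute each $\cFl_{X,\bi}$ directly (each is an iterated tower of affine spaces or a single point), and check the alternating sum vanishes. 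The main obstacle, I expect, is precisely this rank-two bookkeeping: organizing the indecomposables of rank $(1-c_{ij})\alp_i+\alp_j$ and making the Euler-characteristic cancellation transparent rather than case-by-case — in the symmetric case this is Ringel's/Lusztig's classical argument, and the work here is to push it through when $c_i\neq c_j$ so that $E_i$ and $E_j$ are modules over different truncated polynomial rings and the bimodule ${_i}H_j$ is not free of rank one on both sides. Once the rank-two case is settled, the general case follows immediately because a flag of $E_i$'s and $E_j$'s in any $H$-module only sees the $\{i,j\}$-part of the structure.
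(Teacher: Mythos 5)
Your proposal contains all the key structural ingredients used in the paper's proof — the $\N^n$-grading, Lemma~\ref{lem-support} to restrict attention to locally free modules of rank $\br=(1-c_{ij})\alp_i+\alp_j$, and (though you mention it only in passing) the primitivity of $(\ad\theta_i)^{1-c_{ij}}(\theta_j)$ plus Lemma~\ref{lem-indec} to restrict further to indecomposables. The reduction to the rank-two subdatum is also correct. However, the proof you sketch then takes a much heavier route than the paper's, and your ``expected'' telescoping cancellation is chasing a ghost: the paper's key observation is that there are \emph{no} indecomposable locally free $H$-modules with rank vector $(1-c_{ij})\alp_i+\alp_j$ at all. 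Once that is established, no Euler characteristic ever needs to be computed: the primitive element is supported on locally free indecomposables of this rank, and that set is empty.

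The paper establishes the non-existence by a short direct argument on the structure map. If $(i,j)\in\Omega$, a locally free module of that rank is given by an $H_i$-linear map $M_{ij}\df {_i}H_j\otimes_{H_j}M_j\to M_i$ with $M_j\cong H_j$ and $M_i\cong H_i^{1-c_{ij}}$. The domain $\cong{_i}H_j$ is free of rank $-c_{ij}$ over $H_i$, so $\Ima(M_{ij})$ needs at most $-c_{ij}$ generators, while $M_i$ needs $1-c_{ij}$. By the structure theory of submodules of a free module over the local ring $H_i\cong\C[x]/(x^{c_i})$, one can split off a free rank-one $H_i$-summand $N_i$ of $M_i$ that is disjoint from (indeed, has a complement containing) $\Ima(M_{ij})$, giving a direct summand of $M$ isomorphic to $E_i$. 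Hence $M$ is decomposable. The case $(j,i)\in\Omega$ is handled the same way via the adjoint map $M_{ji}^\vee$. Your ``cleaner route'' — classify all indecomposable locally free modules of rank $\br$ and compute flags — would eventually find the empty set and collapse to this, but it front-loads an unnecessary flag-variety computation; and your reference to Ringel's/Lusztig's argument in the symmetric case is slightly off, since there too the point is the non-existence of indecomposable representations of dimension $(1-c_{ij})\alp_i+\alp_j$ for the two-vertex subquiver, not a cancellation of nontrivial Euler characteristics.
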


\begin{proof}
Since $\theta_i \in \PP(\M)$, we have
\[
 \Theta_{ij}:=(\ad \theta_i)^{1-c_{ij}}(\theta_j) \in \PP(\M)
\]
for all $i \not= j$.
By Lemma~\ref{lem-support} and Lemma~\ref{lem-indec}, to check that $\Theta_{ij}=0$ it is therefore sufficient to check that 
there is no indecomposable locally free $H$-module with rank vector
$(1-c_{ij})\alpha_i + \alpha_j$.
Let $M$ be a locally free module $M$ with this rank vector.

Let us assume first that $(i,j)\in\Omega$.
Then, by Section~\ref{subsec-modulation}, $M$ is given by an $H_i$-linear map 
\[
M_{ij}\df {_i}H_j\otimes_{H_j}M_j \to M_i,
\]
where $M_j = H_j$ and $M_i = H_i^{1-c_{ij}}$.
Now, ${_i}H_j\otimes_{H_j}M_j \cong {_i}H_j$ is a free $H_i$-module of rank $-c_{ij}$,
so $M_i$ contains a direct summand $N_i$ isomorphic to $H_i$ such that $N_i\cap\Ima(M_{ij}) = 0$. 
It follows that $M$ has a direct summand isomorphic to $E_i$, and therefore $M$ is not indecomposable.

For $(j,i) \in \Omega$, $M$ is given by an $H_j$-linear
map 
$$
M_{ji}\df {_j}H_i \otimes_{H_i} M_i \to M_j.
$$
Let 
$$
M_{ji}^\vee\df M_i \to {_i}H_j \otimes_{H_j} M_j
$$
be the associated adjoint map as defined in \cite[Section~5]{GLS1}.
The map $M_{ji}^\vee$ is $H_i$-linear, and as $H_i$-modules we have
$M_i = H_i^{1-c_{ij}}$ and ${_i}H_j \otimes_{H_j} M_j \cong H_i^{-c_{ij}}$.
Similarly as before, $M_i$ contains a direct summand $N_i$ isomorphic to
$H_i$ such that $N_i \cap \Ker(M_{ji}^\vee) = 0$.
It follows that $M$ has a direct summand isomorphic to $E_i$.
So again, $M$ is not indecomposable.
\end{proof}


Let $\g = \g(C)$ be the symmetrizable Kac-Moody Lie algebra over $\C$ with Cartan matrix $C$. It is defined by the 
following presentation. There are $3n$ generators $e_i,\ f_i,\ h_i\ (1\le i\le n)$ subject to the relations:
\begin{itemize}
 \item[(i)]
 $[e_i,f_j] = \de_{ij}h_i$;
 \item[(ii)]
 $[h_i,h_j]=0$; 
 \item[(iii)]
 $[h_i,e_j]=c_{ij}e_j,\quad [h_i,f_j]=-c_{ij}f_j$;
 \item[(iv)] 
 $(\ad e_i)^{1-c_{ij}}(e_j) = 0, \quad (\ad f_i)^{1-c_{ij}}(f_j) = 0$\qquad $(i\not = j)$.
\end{itemize} 
Let $\n = \n(C)$ be the Lie subalgebra generated by $e_i\ (1\le i\le n)$.
Then $U(\n)$ is the associative $\C$-algebra with generators $e_i\ (1\le i\le n)$ subject to the 
relations
\[
 (\ad e_i)^{1-c_{ij}}(e_j) = 0, \qquad (1 \le i\not = j \le n).
\]

\begin{Cor}\label{cor-phi}
The assignment $e_i \mapsto \theta_i$ extends to a surjective Hopf algebra homomorphism 
\[
\eta_H\df U(\n) \to \M.
\]
\end{Cor}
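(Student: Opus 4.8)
The plan is to combine the presentation of $U(\n)$ recalled just above with the structural results already established for $\M(H)$. First I would invoke Proposition~\ref{prop-relations}, which says precisely that the generators $\theta_1,\ldots,\theta_n$ of $\M$ satisfy the defining Serre relations $(\ad\theta_i)^{1-c_{ij}}(\theta_j)=0$ for $i\not=j$. Since $U(\n)$ is, by the presentation recalled immediately before the corollary, the associative $\C$-algebra on generators $e_i$ subject to exactly these relations, the universal property of this presentation shows that the assignment $e_i\mapsto\theta_i$ extends (uniquely) to an algebra homomorphism $\eta_H\colon U(\n)\to\M$. Surjectivity is then immediate from the very definition of $\M=\M(H)$ as the subalgebra of $(\F,*)$ \emph{generated} by $\theta_1,\ldots,\theta_n$.

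It remains to check that $\eta_H$ is a morphism of Hopf algebras, i.e.\ that it is compatible with comultiplication (and hence, automatically, with counit and antipode, since on a bialgebra generated in the relevant degrees these are determined). The comultiplication on $U(\n)$ is the standard one for which every $e_i$ is primitive, and the comultiplication on $\M$ is the map $c$ of Proposition~\ref{prop-com}, for which $c(\theta_i)=\theta_i\otimes1+1\otimes\theta_i$ by that same proposition. Thus $\eta_H$ and $(\eta_H\otimes\eta_H)$ intertwine the two coproducts on the generators $e_i$; since both $U(\n)$ and $\M$ are bialgebras, the set of elements on which $c\circ\eta_H=(\eta_H\otimes\eta_H)\circ\De_{U(\n)}$ is a subalgebra, and it contains the generators, hence equals all of $U(\n)$. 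This gives the Hopf algebra homomorphism asserted in the corollary.

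No single step here is really an obstacle, since all the substantive work has been done in Propositions~\ref{prop-com}, \ref{prop-M-Hopf} and~\ref{prop-relations}; the only point requiring a word of care is the compatibility with the Hopf structure, where one should note that both sides are \emph{cocommutative} bialgebras and that an algebra map between bialgebras which respects the coproduct on a generating set respects it everywhere (and then respects counit and antipode by uniqueness of these structures on a Hopf algebra). Formally, I would phrase the proof as: by Proposition~\ref{prop-relations} the $\theta_i$ satisfy the Serre relations, so $e_i\mapsto\theta_i$ defines an algebra homomorphism $\eta_H\colon U(\n)\to\M$, which is surjective by definition of $\M$; and by Proposition~\ref{prop-com} we have $c(\theta_i)=\theta_i\otimes1+1\otimes\theta_i=(\eta_H\otimes\eta_H)(\De(e_i))$, so $\eta_H$ is a bialgebra, hence (by Proposition~\ref{prop-M-Hopf}) a Hopf algebra homomorphism.
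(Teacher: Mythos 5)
Your proposal is correct and follows essentially the same route as the paper: Proposition~\ref{prop-relations} gives the Serre relations, hence the algebra homomorphism via the presentation of $U(\n)$; surjectivity is immediate from the definition of $\M$ as the subalgebra generated by the $\theta_i$; and compatibility with the Hopf structure comes from $c(\theta_i)=\theta_i\otimes1+1\otimes\theta_i$ (Proposition~\ref{prop-com}) extended multiplicatively, with counit and antipode following automatically. The paper phrases the Hopf compatibility slightly more explicitly by describing the antipodes on primitive elements directly, but the underlying argument is the same.
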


\begin{proof}
The algebras $U(\n)$ and $\cM$ are generated as algebras by their subsets
$\n$ and $\cP(\cM)$ of primitive elements, respectively.
It follows from Proposition~\ref{prop-relations} that 
$e_i \mapsto \theta_i$ extends to a surjective algebra homomorphism
$\eta_H\df U(\n) \to \cM$.
The comultiplication of $U(\n)$ and $\cM$ are given by 
$e_i \mapsto e_i \otimes 1 + 1 \otimes e_i$ and
$\theta_i \mapsto \theta_i \otimes 1 + 1 \otimes \theta_i$,
respectively.
The antipodes of $U(\n)$ and $\cM$ are given by 
$x \mapsto -x$ and $m \mapsto -m$ for all $x \in \n$ and all $m \in \cP(\cM)$,
respectively.
It follows that $\eta_H$ is a Hopf algebra homomorphism.
\end{proof}

\subsection{Weight spaces}\label{sec-weight}
As before, 
let $\alpha_1,\ldots,\alpha_n$ be the standard basis of $\Z^n$.
As before, let $H = H(C,D,\Omega)$ and $\n = \n(C)$.
The algebras $U(\n)$ and $\cM = \cM(H)$ are both $\N^n$-graded via 
$\deg(e_i) := \alpha_i$ and $\deg(\theta_i) := \alpha_i$, respectively.
For $\alpha \in \N^n$ let $U(\n)_\alpha$ and $\cM_\alpha$ be the vector spaces of
elements of degree $\alpha$.
Define
$$
\n_\alpha := \n \cap U(\n)_\alpha
\text{\;\;\; and \;\;\;}
\cP(\cM)_\alpha := \cP(\cM) \cap \cM_\alpha.
$$
It follows that 
$$
\n = \bigoplus_{\alpha \in \N^n} \n_\alpha
\text{\;\;\; and \;\;\;}
\cP(\cM) = \bigoplus_{\alpha \in \N^n} \cP(\cM)_\alpha.
$$
Furthermore, the surjective Hopf algebra homomorphism
$$
\eta_H\df U(\n) \to \cM
$$
restricts to a surjective Lie algebra homomorphism
$\n \to \cP(\cM)$ and to a 
surjective linear map $\eta_{H,\alpha}\df \n_\alpha \to \cP(\cM)_\alpha$
for each $\alpha \in \N^n$.
The \emph{weight space} $\n_\alpha$ is non-zero if and only if
$\alpha$ is a positive root of the Kac-Moody Lie algebra $\g = \g(C)$.
In particular, if $C$ is a Cartan matrix of Dynkin type, then we have
$$
\dim(\n_\alpha) = 
\begin{cases}
1 & \text{if $\alpha \in \Delta^+(C)$},\\
0 & \text{otherwise}.
\end{cases}
$$


\section{Pseudo Auslander-Reiten sequences for preprojective
modules}\label{section-pseudo}


\subsection{Auslander-Reiten translates and Coxeter transformation}
Let $C \in M_n(\Z)$ 
be a symmetrizable generalized Cartan matrix, let $D = \diag(c_1,\ldots,c_n)$
be a symmetrizer of $C$, and let
$\Omega$ be an orientation of $C$.

Let $T = T(C,D,\Omega)$ be the tensor algebra of some modulation associated with $(C,D,\Omega)$, compare Section~\ref{sec-DlabRingel}.
Up to isomorphism there are $n$ simple $T$-modules 
$S_1^T,\ldots,S_n^T$ with $\dim \End_T(S_i^T) = c_i$.  
For $1 \le i \le n$, let
$P_i^T$ (resp. $I_i^T$) be the indecomposable projective (resp. injective) $T$-module with $\tp(P_i^T) \cong S_i^T$ (resp. $\soc(I_i^T) \cong S_i^T)$.
Let $c_T\df \Z	^n \to \Z^n$ be the Coxeter transformation of $T$.
The automorphism $c_T$ can be defined by the rule
$$
c_T(\dimv(P_i^T)) = -\dimv(I_i^T)
$$
for all $1 \le i \le n$.

Let $H = H(C,D,\Omega)$.
Up to isomorphism there are $n$ simple $H$-modules 
$S_1^H,\ldots,S_n^H$ corresponding to the vertices of $Q(C,D)$.  
For $1 \le i \le n$, let
$P_i^H$ (resp. $I_i^H$) be the indecomposable projective (resp. injective) $H$-module with $\tp(P_i^H) \cong S_i^H$ (resp. $\soc(I_i^H) \cong S_i^H)$.
Let $c_H\df \Z^n \to \Z^n$ be the Coxeter transformation of $H$,
as defined in \cite[Section~2.5]{GLS1}.
The automorphism $c_H$ can be described by the rule
$$
c_H(\rkv(P_i^H)) = -\rkv(I_i^H)
$$
for all $1 \le i \le n$, see \cite[Section~3.4]{GLS1}.

For $M,N \in \repvp(H)$ and $X,Y \in \md(T)$, 
let 
\[
\bil{M,N}_H := \dim \Hom_H(M,N) - \dim \Ext_H^1(M,N)
\]
and
\[
\bil{X,Y}_T := \dim \Hom_T(X,Y) - \dim \Ext_T^1(X,Y).
\]
Then \cite[Lemmas~3.2 and 3.3 and Section~4]{GLS1} imply the following crucial result.

\begin{Prop}\label{prop-dimrank}
We have
\begin{itemize}

\item[(i)]
$\dimv(P_i^T) = \rkv(P_i^H)$;

\item[(ii)]
$\dimv(I_i^T) = \rkv(I_i^H)$;

\item[(iii)]
$c_T = c_H$;

\item[(iv)]
For $X,Y \in \md(T)$ and $M,N \in \repvp(H)$ with $a = (a_1,\ldots,a_n) = \dimv(X) = \rkv(M)$ and $b = (b_1,\ldots,b_n) = \dimv(Y) = \rkv(N)$
we have
$$
\bil{X,Y}_T = \bil{M,N}_H = 
\sum_{i=1}^n c_ia_ib_i + \sum_{(j,i) \in \Omega} c_ic_{ij}a_ib_j. 
$$

\end{itemize}
\end{Prop}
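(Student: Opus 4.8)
The plan is to prove (iv) first and then deduce (i)--(iii) from it, the only delicate point being the reconciliation of the modulated-graph conventions of \cite{DR} with the quiver-with-relations conventions of Section~\ref{subsec-defH}. For (iv), take first the $H$-side: by \cite[Proposition~4.1]{GLS1}, recalled in Section~\ref{subsec-loc-free}, the form $\bil{-,-}_H$ descends to a bilinear form on rank vectors whose values on the basis $\alpha_i=\rkv(E_i)$ are $\bil{\alpha_i,\alpha_j}_H=c_ic_{ij}$ if $(j,i)\in\Omega$, $c_i$ if $i=j$, and $0$ otherwise; bilinearity then gives $\bil{M,N}_H=\sum_i c_ia_ib_i+\sum_{(j,i)\in\Omega}c_ic_{ij}a_ib_j$ for $a=\rkv(M)$, $b=\rkv(N)$. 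On the $T$-side, $T$ is hereditary, so $\bil{-,-}_T$ is bilinear on dimension vectors and it suffices to evaluate it on the simple modules: $\bil{S_i^T,S_j^T}_T=\dim\Hom_T(S_i^T,S_j^T)-\dim\Ext_T^1(S_i^T,S_j^T)$, where $\dim\Hom_T(S_i^T,S_j^T)=\delta_{ij}c_i$ since $\End_T(S_i^T)=F_i$ has $F$-dimension $c_i$, and $\dim\Ext_T^1(S_i^T,S_j^T)$ is the $F$-dimension of the summand of $B=\bigoplus_{(i,j)\in\Omega}{_i}F_j$ attached to the pair in question, which is $c_i|c_{ij}|=-c_ic_{ij}$ when $(j,i)\in\Omega$ and $0$ otherwise. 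Hence $\bil{S_i^T,S_j^T}_T=\bil{E_i,E_j}_H$ for all $i,j$, and by bilinearity $\bil{X,Y}_T$ is given by the same closed formula; this is the computation of \cite[Section~4]{GLS1}. In particular $\bil{X,Y}_T=\bil{M,N}_H$ whenever $\dimv(X)=\rkv(M)$ and $\dimv(Y)=\rkv(N)$.

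For (i) and (ii) I would quote \cite[Lemmas~3.2 and~3.3]{GLS1}; one can also read them off from (iv). Since $P_i^H$ is projective, $\Ext_H^1(P_i^H,M)=0$ for $M\in\repvp(H)$, so $\bil{\rkv(P_i^H),\rkv(M)}_H=\dim\Hom_H(P_i^H,M)=\dim M_i=c_ib_i$ with $b=\rkv(M)$; taking $b=\alpha_j$ gives $\bil{\rkv(P_i^H),\alpha_j}_H=c_i\delta_{ij}$ for all $j$, and the same computation for $T$ gives $\bil{\dimv(P_i^T),\alpha_j}_T=c_i\delta_{ij}$. The Gram matrix of $\bil{-,-}_H=\bil{-,-}_T$ in the standard basis is triangular, after a topological ordering of the vertices of the acyclic orientation $\Omega$, with nonzero diagonal entries $c_i$, hence invertible, so these conditions determine $\rkv(P_i^H)$ and $\dimv(P_i^T)$ uniquely; since the two forms coincide this gives (i). Part (ii) is symmetric, using $\Ext_H^1(M,I_i^H)=0$ and $\dim\Hom_H(M,I_i^H)=c_ib_i$. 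For (iii): by \cite[Section~3.4]{GLS1} and \cite{DR} the vectors $\rkv(P_i^H)$ and $\dimv(P_i^T)$ each form a basis of $\Z^n$, and $c_H$ (resp.\ $c_T$) is the unique automorphism with $\rkv(P_i^H)\mapsto-\rkv(I_i^H)$ (resp.\ $\dimv(P_i^T)\mapsto-\dimv(I_i^T)$); by (i) and (ii) the two rules agree, so $c_H=c_T$.

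The main obstacle is the convention bookkeeping hidden in the $T$-side of (iv): one must check that, with the arrow conventions of $Q(C,\Omega)$ against the somewhat dual modulated-graph conventions of \cite{DR}, the nonzero off-diagonal term of $\bil{S_i^T,S_j^T}_T$ is indexed precisely by $(j,i)\in\Omega$ and equals $c_ic_{ij}$ rather than $c_jc_{ji}$ or a transpose, and that $\dimv$ (counting the $F_i$-dimensions of the components of a $T$-module) and $\rkv$ (counting the $H_i$-ranks of the components of an $H$-module) are identified as the same coordinates, so that $\dimv(S_i^T)=\rkv(E_i)=\alpha_i$. Once the two sides are normalised compatibly, everything else is formal.
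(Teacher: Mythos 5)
Your proposal is correct, and it is more self-contained than the paper's treatment: the paper disposes of Proposition~\ref{prop-dimrank} with a single sentence citing \cite[Lemmas~3.2, 3.3 and Section~4]{GLS1}, while you reconstruct the content. The organizing idea of proving (iv) first on the standard basis and then deducing (i)--(iii) is sound. A few remarks on the details.

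Your computation of the $T$-side of (iv) is right, and the convention issue you flag at the end is in fact already resolved by the set-up of Section~\ref{sec-DlabRingel}: because the structure maps $X_{ij}\df {_i}F_j\otimes_{F_j}X_j\to X_i$ go from the $j$-component to the $i$-component, the tensor algebra $T=T_S(B)$ realizes exactly the quiver $Q(C,\Omega)$ (arrow $j\to i$ for $(i,j)\in\Omega$), so $\Ext^1_T(S_i^T,S_j^T)\ne 0$ precisely when $(j,i)\in\Omega$ with $\dim_F\Ext^1_T(S_i^T,S_j^T)=\dim_F{_j}F_i=c_j|c_{ji}|=c_i|c_{ij}|$. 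Together with $\dim_F\End_T(S_i^T)=c_i$ and $\Ext^1_T(S_i^T,S_i^T)=0$ (since $T$ is hereditary with acyclic underlying quiver), this gives $\bil{S_i^T,S_j^T}_T=\bil{\alpha_i,\alpha_j}_H$, which is exactly the identification you need.

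Your derivation of (i) and (ii) from (iv) via the nondegeneracy of the Euler form is a genuinely different route from the one the paper implicitly uses: \cite[Lemma~3.2]{GLS1} writes down the rank vectors of the indecomposable projectives explicitly, whereas you characterize them as the unique solutions of the linear system $\bil{\dimv(P_i^T),\alpha_j}_T=c_i\delta_{ij}=\bil{\rkv(P_i^H),\alpha_j}_H$ and invoke the triangularity of the Gram matrix for an acyclic orientation. Both are valid; your argument is cleaner in that it avoids writing out the explicit combinatorics of $\rkv(P_i^H)$, at the small cost of needing to observe nondegeneracy. Part (iii) then follows exactly as you say, since both $\{\rkv(P_i^H)\}$ and $\{\dimv(P_i^T)\}$ are $\Z$-bases of $\Z^n$ and $c_H$, $c_T$ are determined by their action on these bases.

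One micro-clarification worth making explicit when you say ``$\dim\Hom_H(P_i^H,M)=\dim M_i=c_ib_i$'': this uses $\Hom_H(He_i,M)\cong e_iM=M_i$ over $\C$ together with the local freeness of $M$, which gives $\dim_\C M_i=c_i\rk(M_i)$; the parallel computation on the $T$-side uses $\Hom_T(Te_i,X)\cong X_i$ and $\dim_F X_i=c_i\dim_{F_i}X_i$. You have this implicitly, but making the free-module hypothesis explicit shows where local freeness of $M$ and $N$ enters.

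No gaps; the proof stands.
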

By Proposition~\ref{prop-dimrank}(iv) we can consider 
$\bil{-,-}_H$ and $\bil{-,-}_T$ as 
bilinear forms on $\Z^n \times \Z^n$.

Let $\tau_T$ (\resp $\tau_T^-$) denote the Auslander-Reiten translation (\resp the inverse Auslander-Reiten translation) 
of the algebra $T$.
By definition $\tau_T = {\rm D}{\rm Tr}$ is the dual of the transpose
${\rm Tr}$.
We refer to \cite[Chapter~4]{ARS} for further details.
The next lemma follows from general Auslander-Reiten theory and from the fact that $T$ is a hereditary algebra \cite[Section~4]{R2}.

\begin{Lem}\label{lemma-pseudo1}
For non-projective indecomposable $T$-modules $X$ and $Y$ 
the following hold:
\begin{itemize}

\item[(i)]
$\Hom_T(X,Y) \cong \Hom_T(\tau_T(X),\tau_T(Y))$;

\item[(ii)]
$\Ext_T^1(X,Y) \cong \Ext_T^1(\tau_T(X),\tau_T(Y))$;

\item[(iii)]
$\dimv(\tau_T(X)) = c_T(\dimv(X))$.

\end{itemize}
\end{Lem}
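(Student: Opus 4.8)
The plan is to deduce everything from the classical Auslander-Reiten theory for the hereditary Artin algebra $T$, as developed in \cite[Chapter~4]{ARS} and \cite[Section~4]{R2}. Since $T$ is hereditary, the Auslander-Reiten translation $\tau_T$ restricts to an equivalence of categories between the full subcategory of $\md(T)$ whose indecomposable summands are non-projective and the full subcategory whose indecomposable summands are non-injective, with quasi-inverse $\tau_T^-$. Granting this, part (i) is almost immediate: for non-projective indecomposables $X$ and $Y$, the module $\tau_T(X)$ is again non-projective (the image of a non-projective indecomposable under $\tau_T$ is non-injective, not non-projective, so one has to be a little careful here), so rather than iterating $\tau_T$ naively, I would argue that $\tau_T$ induces a bijection on $\Hom$-spaces between the relevant subcategories. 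More precisely, the functorial isomorphism $\Hom_T(X,Y)\cong\Hom_T(\tau_T X,\tau_T Y)$ holds whenever $X$ is non-projective, by the general formula $\Hom_T(X,Y)\cong D\,\overline{\Hom}_T(Y,\tau_T X)$ combined with hereditariness (which forces $\overline{\Hom}=\Hom$ in the appropriate direction because there are no projective-injective summands obstructing things); I would cite \cite[Section~4]{R2} for the precise statement in the hereditary case.

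For part (ii), I would use the Auslander-Reiten formula $\Ext_T^1(X,Y)\cong D\,\overline{\Hom}_T(Y,\tau_T X)\cong D\,\underline{\Hom}_T(\tau_T^- Y, X)$, valid for all $X,Y$. Since $T$ is hereditary and $X,Y$ are non-projective indecomposables, one has $\tau_T^-\tau_T X\cong X$ and $\tau_T^-\tau_T Y\cong Y$, and the stable and costable $\Hom$-spaces involved coincide with ordinary $\Hom$-spaces because the relevant modules have no projective (resp.\ injective) summands. Applying the formula twice — once to the pair $(X,Y)$ and once to the pair $(\tau_T X,\tau_T Y)$ — and matching terms via the isomorphism of (i) yields $\Ext_T^1(X,Y)\cong\Ext_T^1(\tau_T X,\tau_T Y)$. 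Alternatively, and perhaps more cleanly, I would simply invoke that $\tau_T$ is an equivalence between the subcategories of modules without projective (resp.\ injective) summands, hence preserves both $\Hom$ and $\Ext^1$ between objects lying in its domain; this is the content of the standard statement that for hereditary algebras $\tau_T$ is a derived-category shift composed with a self-equivalence.

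Part (iii) is the formula $\dimv(\tau_T X)=c_T(\dimv X)$, which is standard: for hereditary $T$ one has a minimal projective presentation $0\to P_1\to P_0\to X\to 0$, hence $0\to X\to I_0\to I_1\to \tau_T^- $ dual data, and applying the Nakayama-type functor computes $\dimv(\tau_T X)$ as the image of $\dimv(X)$ under the linear map sending $\dimv(P_i^T)\mapsto -\dimv(I_i^T)$, which is exactly the Coxeter transformation $c_T$ by its defining rule recalled above. This uses crucially that $X$ is non-projective so that the minimal projective presentation has no split part interfering with the dimension count; I would cite \cite[Chapter~4]{ARS} or \cite[Section~4]{R2}. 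I do not expect any genuine obstacle here: the entire lemma is a packaging of well-known facts about hereditary algebras, and the only point requiring minor care is the bookkeeping about which subcategory ($\tau_T$-domain versus $\tau_T$-codomain) each module lives in, so that the functorial isomorphisms apply verbatim.
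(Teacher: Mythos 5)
The paper gives no argument for this lemma at all; it simply says that it ``follows from general Auslander-Reiten theory and from the fact that $T$ is a hereditary algebra'' and points to \cite[Section~4]{R2}. So in spirit your plan is exactly the paper's: everything is to be read off from classical AR theory for hereditary Artin algebras, and your parts (iii) and the ``alternative'' paragraph under (ii) do precisely that. The correct packaging, which your alternative gestures at, is: $\tau_T$ induces an equivalence from the stable category $\underline{\md}(T)$ to the costable category $\overline{\md}(T)$; for hereditary $T$, a nonzero map out of a non-projective indecomposable cannot factor through a projective (submodules of projectives are projective, so such a factorization would split off a projective summand of $X$), and dually a nonzero map into a non-injective indecomposable cannot factor through an injective. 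Hence $\Hom_T(X,Y)=\underline{\Hom}_T(X,Y)\cong\overline{\Hom}_T(\tau_T X,\tau_T Y)=\Hom_T(\tau_T X,\tau_T Y)$, giving (i); and (ii) then follows from the AR formula $\Ext^1_T(X,Y)\cong D\overline{\Hom}_T(Y,\tau_T X)\cong D\underline{\Hom}_T(\tau_T^-Y,X)$ together with the same identification of stable and ordinary Hom.

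One genuine error to fix: in your discussion of part (i) you write that the ``general formula'' is $\Hom_T(X,Y)\cong D\,\overline{\Hom}_T(Y,\tau_T X)$. That is not the Auslander--Reiten formula; the correct one has $\Ext^1_T(X,Y)$ on the left (you state it correctly under (ii)). Moreover, even if one pretended that formula held, applying it to $(X,Y)$ and to $(\tau_T X,\tau_T Y)$ produces $D\overline{\Hom}_T(Y,\tau_T X)$ and $D\overline{\Hom}_T(\tau_T Y,\tau_T^2 X)$ respectively, which do not visibly match, so the deduction ``by the general formula $\dots$ combined with hereditariness'' does not go through as written. You should delete that sentence and let (i) rest on the stable-equivalence argument sketched above (or on the equivalent Serre-duality statement $\Hom_T(X,Y)\cong D\Ext^1_T(Y,\tau_T X)$ in $D^b(\md T)$, if you prefer the derived picture). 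With that repair, your write-up and the paper's one-line citation describe the same standard argument.
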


There is an obvious dual of Lemma~\ref{lemma-pseudo1} for non-injective indecomposable $T$-modules
$X$ and $Y$.

Let $\tau_H$ (\resp $\tau_H^{-}$) denote the Auslander-Reiten translation (\resp the inverse Auslander-Reiten translation)
of the algebra $H$.
Recall from \cite{GLS1} that an indecomposable $H$-module $M$ is called $\tau$-\emph{locally free} 
provided $\tau_H^m(M)$ is locally free for all $m \in \Z$.

\begin{Lem}\label{lemma-pseudo2}
For non-projective indecomposable $\tau$-locally free $H$-modules
$M$ and $N$ the following hold:
\begin{itemize}

\item[(i)]
$\Hom_H(M,N) \cong \Hom_H(\tau_H(M),\tau_H(N))$;

\item[(ii)]
$\Ext_H^1(M,N) \cong \Ext_H^1(\tau_H(M),\tau_H(N))$;

\item[(iii)]
$\rkv(\tau_H(M)) = c_H(\rkv(M))$.

\end{itemize}
\end{Lem}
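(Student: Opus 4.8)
The plan is to reduce the statement for $H$ to the corresponding statement for the hereditary tensor algebra $T = T(C,D,\Omega)$, namely Lemma~\ref{lemma-pseudo1}, by exploiting the fact that $\tau$-locally free modules behave, as far as ranks and the homological bilinear form are concerned, exactly like $T$-modules via Proposition~\ref{prop-dimrank}. The key point is that for $M,N \in \repvp(H)$ the numbers $\dim\Hom_H(M,N)$ and $\dim\Ext^1_H(M,N)$ are not determined by rank vectors alone, so (i) and (ii) are genuinely stronger than the corresponding identities of Euler forms; what saves us is that $\tau_H$ on $\tau$-locally free modules is controlled by the same Coxeter transformation $c_H = c_T$.

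First I would establish (iii). Since $M$ is $\tau$-locally free and non-projective, $\tau_H(M)$ is again locally free and indecomposable, and by \cite[Section~3.4]{GLS1} (the analogue of Lemma~\ref{lemma-pseudo1}(iii) proved there for $H$) one has $\rkv(\tau_H(M)) = c_H(\rkv(M))$; alternatively this can be deduced from the AR-sequence $0 \to \tau_H(M) \to E \to M \to 0$ in $\repvp(H)$, whose existence is guaranteed by \cite[Theorem~3.9]{GLS1}, together with the projective resolution computing $c_H$. For (ii), I would use that $H$ is Iwanaga-Gorenstein of dimension~$1$ and that $M,N$ are locally free, so $\pdim M \le 1$ and $\idim N \le 1$; then the Auslander-Reiten formula gives $\Ext^1_H(M,N) \cong D\overline{\Hom}_H(N,\tau_H M)$ and, applying $\tau_H$ (an equivalence on the relevant stable categories of non-projective/non-injective modules), one gets a bijection between $\Ext^1_H(M,N)$ and $\Ext^1_H(\tau_H M, \tau_H N)$ provided all four modules and $\tau_H$ behave well — this is exactly where the $\tau$-locally free hypothesis is used, to keep everything inside $\repvp(H)$ where the AR-theory of \cite[Lemma~3.8, Theorem~3.9]{GLS1} applies. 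Finally, (i) follows from (ii) by comparing Euler forms: $\dim\Hom_H(M,N) = \bil{M,N}_H + \dim\Ext^1_H(M,N)$, the form $\bil{-,-}_H$ depends only on rank vectors by \cite[Proposition~4.1]{GLS1}, and $\bil{\rkv(\tau_H M),\rkv(\tau_H N)}_H = \bil{c_H\rkv(M), c_H\rkv(N)}_H = \bil{\rkv(M),\rkv(N)}_H$ because $c_H$ is an isometry of $\bil{-,-}_H$ (as the Coxeter transformation always is); combining this with (ii) yields $\dim\Hom_H(M,N) = \dim\Hom_H(\tau_H M, \tau_H N)$.

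The main obstacle I anticipate is making the AR-formula argument for (ii) rigorous inside $\repvp(H)$ rather than in $\md(H)$: the usual Auslander-Reiten duality is stated for arbitrary finite-dimensional modules over an Artin algebra and involves $\Ext^1$ and the transpose, and one must check that restricting attention to locally free modules and to the AR-sequences of $\repvp(H)$ (which, by \cite{GLS1}, are a sub-collection of the AR-sequences of $\md(H)$ whose terms are locally free) does not break the bijection. The cleanest route is probably to combine the $\tau$-locally free hypothesis with the isomorphism $\rep(C,D,\Omega) \cong \md(H)$ and simply invoke the general AR-theory of $H$ as an Artin algebra, noting that $\tau_H$ restricted to $\tau$-locally free modules preserves local freeness by definition, so that Lemma~\ref{lemma-pseudo2} becomes a direct transport of the standard statements $\Hom_H(M,N) \cong \Hom_H(\tau_H M, \tau_H N)$ and $\Ext^1_H(M,N) \cong \Ext^1_H(\tau_H M, \tau_H N)$ valid for non-projective, non-injective $M,N$ over any representation-finite-or-infinite Artin algebra — here using that a $\tau$-locally free non-projective module is automatically non-injective for the relevant range, or handling the injective case separately via the dual statement.
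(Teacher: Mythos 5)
Your overall plan runs in the opposite direction from the paper's. The paper establishes (i) and (iii) directly by citing the material on $\tau$-locally free modules in \cite[Section~11.1]{GLS1}, and then deduces (ii) as a one-line corollary from \cite[Proposition~4.1]{GLS1} (rank-invariance of the Euler form) together with the isometry property of $c_H$. You instead try to prove (ii) directly via the Auslander--Reiten formula and then recover (i) from (ii) via the same Euler-form argument. The (ii)$\Rightarrow$(i) step is fine and is exactly the reversed version of the paper's (i)$\Rightarrow$(ii) step. The problem is your direct argument for (ii).

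The claim underlying your proof of (ii) — that $\tau_H$ is ``an equivalence on the relevant stable categories'' so that the identities $\Hom_H(M,N)\cong\Hom_H(\tau_H M,\tau_H N)$ and $\Ext^1_H(M,N)\cong\Ext^1_H(\tau_H M,\tau_H N)$ hold ``over any representation-finite-or-infinite Artin algebra'' — is false in that generality. For an arbitrary Artin algebra $A$, the functor $\tau_A=D\mathrm{Tr}$ is an equivalence $\underline{\md}(A)\to\overline{\md}(A)$ (stable modulo projectives to stable modulo injectives), \emph{not} an autoequivalence of either, and one only gets $\underline{\Hom}(M,N)\cong\overline{\Hom}(\tau M,\tau N)$. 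Passing from stable $\Hom$ to actual $\Hom$ is exactly what fails for non-hereditary algebras: the hereditary case works because a non-projective indecomposable admits no nonzero map to a projective, so $\underline{\Hom}(X,-)=\Hom(X,-)$; the dual statement handles injectives, and $\tau$ then literally preserves $\Hom$-spaces. That argument, which is what Lemma~\ref{lemma-pseudo1} uses for $T$ via \cite[Section~4]{R2}, does not carry over to $H$, which is Iwanaga--Gorenstein of dimension~$1$ but not hereditary. The $\tau$-locally free hypothesis is precisely what rescues the statement, but establishing that requires the specific analysis of $\tau_H$ on $\repvp(H)$ carried out in \cite[Section~11.1]{GLS1}; it is not a consequence of generic AR-theory. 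So as written, your route to (ii) begs the question: the ``stable equivalence'' input you need is equivalent to (i), which you then derive from (ii). You identified this as ``the main obstacle'' yourself, but the proposed resolution (invoke general AR-theory) does not close the circle.

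The correct fix is to do what the paper does: prove (i) and (iii) first by appealing to the structure theory of $\tau$-locally free modules in \cite[Section~11.1]{GLS1}, and then obtain (ii) from the Euler-form identity
\[
\dim\Ext^1_H(M,N)=\dim\Hom_H(M,N)-\bil{\rkv(M),\rkv(N)}_H,
\]
using that $\bil{-,-}_H$ depends only on rank vectors (\cite[Proposition~4.1]{GLS1}) and that $c_H$ is an isometry of $\bil{-,-}_H$. Your isometry observation and your Euler-form bookkeeping are correct; it is the standalone proof of (ii) that does not hold up.
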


\begin{proof}
Parts (i) and (iii) are a consequence of the results in 
\cite[Section~11.1]{GLS1}.
Part (ii) follows then from \cite[Proposition~4.1]{GLS1}.
\end{proof}

There is an obvious dual of Lemma~\ref{lemma-pseudo1} for non-injective indecomposable $\tau$-locally free $H$-modules
$M$ and $N$.

\subsection{Preprojective modules}\label{sec-preproj}
As before, for $1 \le i \le n$
let $P_i^T$ (resp. $P_i^H$) be the indecomposable projective 
$T$-module (resp. $H$-module) associated with $i$.
Recall that an indecomposable  $T$-module $X$ (resp. an indecomposable $H$-module $M$) is \emph{preprojective}
if $X \cong \tau_T^{-k}(P_i^T)$ (resp. $M \cong \tau_H^{-k}(P_i^H)$)
for some $1 \le i \le n$ and some $k \ge 0$.
By \cite[Proposition 11.6]{GLS1}, preprojective modules are locally free.

\begin{Prop}\label{prop-preproj1}
For all $k \ge 0$ we have
$$
\dimv(\tau_T^{-k}(P_i^T)) = \rkv(\tau_H^{-k}(P_i^H)).
$$
\end{Prop}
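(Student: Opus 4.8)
The plan is to induct on $k$, using the tools already assembled. The base case $k=0$ is exactly Proposition~\ref{prop-dimrank}(i): $\dimv(P_i^T) = \rkv(P_i^H)$. For the inductive step, assume $\dimv(\tau_T^{-(k-1)}(P_j^T)) = \rkv(\tau_H^{-(k-1)}(P_j^H))$ for all $j$. To pass from $k-1$ to $k$ I want to apply the inverse Auslander-Reiten translates and compare, using the dual versions of Lemma~\ref{lemma-pseudo1}(iii) and Lemma~\ref{lemma-pseudo2}(iii), namely $\dimv(\tau_T^-(X)) = c_T^{-1}(\dimv(X))$ and $\rkv(\tau_H^-(M)) = c_H^{-1}(\rkv(M))$. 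Since $c_T = c_H$ by Proposition~\ref{prop-dimrank}(iii), applying the same linear automorphism $c_H^{-1}$ to both sides of the inductive hypothesis immediately gives the desired equality at level $k$.

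The one genuine subtlety is the hypothesis under which Lemma~\ref{lemma-pseudo2}(iii) (and its dual) applies: it requires the $H$-module to be non-projective indecomposable and $\tau$-locally free, and one must also know $\tau_H^-$ behaves well, i.e.\ that the relevant modules are non-injective so that $\tau_H^- \tau_H = \id$ on them. I would handle this by invoking \cite[Proposition~11.6]{GLS1}, already cited in Section~\ref{sec-preproj}, which guarantees that preprojective $H$-modules are locally free; more precisely the results of \cite[Section~11.1]{GLS1} referenced in the proof of Lemma~\ref{lemma-pseudo2} show that $\tau_H^{-m}(P_i^H)$ is $\tau$-locally free for all $m \ge 0$, so each $\tau_H^{-(k-1)}(P_i^H)$ is an indecomposable $\tau$-locally free module, and (being preprojective, hence not injective unless the algebra is very small, a case one checks directly or absorbs into the statement) the dual of Lemma~\ref{lemma-pseudo2}(iii) applies to compute $\rkv(\tau_H^{-k}(P_i^H)) = c_H^{-1}(\rkv(\tau_H^{-(k-1)}(P_i^H)))$. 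The analogous statement on the $T$-side is standard hereditary Auslander-Reiten theory (the dual of Lemma~\ref{lemma-pseudo1}), where preprojective modules are automatically non-injective once they lie far enough from the injectives, and in any case $\dimv(\tau_T^-(X)) = c_T^{-1}(\dimv(X))$ holds for every non-injective indecomposable.

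Putting this together: by the $T$-side computation, the $H$-side computation, the inductive hypothesis, and $c_T = c_H$, we get
\[
\dimv(\tau_T^{-k}(P_i^T)) = c_T^{-1}\bigl(\dimv(\tau_T^{-(k-1)}(P_i^T))\bigr)
= c_H^{-1}\bigl(\rkv(\tau_H^{-(k-1)}(P_i^H))\bigr)
= \rkv(\tau_H^{-k}(P_i^H)),
\]
which completes the induction. I expect the main obstacle to be purely bookkeeping: verifying that at every stage the modules in question satisfy the non-projective/non-injective/$\tau$-locally free hypotheses needed to quote Lemmas~\ref{lemma-pseudo1} and \ref{lemma-pseudo2} and their duals. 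This is not deep but requires care, and it is cleanest to first establish (or recall from \cite{GLS1}) that the preprojective $H$-modules $\tau_H^{-m}(P_i^H)$ form a family closed under $\tau_H^-$ with controlled rank vectors, at which point the linear-algebra identity $c_T = c_H$ does all the remaining work.
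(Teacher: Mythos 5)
Your proof is correct and takes essentially the same route as the paper, which simply states that the result ``follows immediately from Proposition~\ref{prop-dimrank} and Lemmas~\ref{lemma-pseudo1} and \ref{lemma-pseudo2}''; you have filled in the induction on $k$ that those three citations compress. Your aside about preprojective modules being ``not injective unless the algebra is very small'' is a bit off (in Dynkin type preprojective modules \emph{are} eventually injective), but this does not affect the argument: when $\tau_H^{-(k-1)}(P_i^H)$ is injective, the matching rank/dimension vectors together with Proposition~\ref{prop-dimrank}(ii) force $\tau_T^{-(k-1)}(P_i^T)$ to be injective as well, so both sides of the identity vanish at level $k$ and the induction terminates coherently.
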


\begin{proof}
This follows immediately from Proposition~\ref{prop-dimrank}
and Lemmas~\ref{lemma-pseudo1} and \ref{lemma-pseudo2}.
\end{proof}

\begin{Prop}\label{prop-preproj3}
Let $M = \tau_H^{-k}(P_i^H)$ and
$N = \tau_H^{-s}(P_j^H)$ be preprojective $H$-modules, and let
$X = \tau_T^{-k}(P_i^T)$ and
$Y = \tau_T^{-s}(P_j^T)$ be the corresponding preprojective 
$T$-modules.
Then we have 
\begin{align*}
\dim \Hom_H(M,N) &= \dim \Hom_T(X,Y),\\
\dim \Ext_H^1(M,N) &= \dim \Ext_T^1(X,Y).
\end{align*}
\end{Prop}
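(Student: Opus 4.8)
The plan is to prove the statement by induction on $k+s$, reducing step by step to the case where one of the modules is projective, which is the base case. The inductive mechanism is provided by the Auslander-Reiten translation: on the $H$-side by Lemma~\ref{lemma-pseudo2} and on the $T$-side by Lemma~\ref{lemma-pseudo1}. The key point making the induction legitimate is that preprojective $H$-modules are $\tau$-locally free — indeed, $\tau_H^m(M)$ for $M = \tau_H^{-k}(P_i^H)$ is either preprojective (hence locally free by \cite[Proposition~11.6]{GLS1}) or zero for all $m\in\Z$, so Lemma~\ref{lemma-pseudo2} applies to $M$ and $N$ whenever both are non-projective.

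\medskip

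\emph{Base case.} Suppose $k = 0$, so $M = P_i^H$ and $X = P_i^T$ (the case $s=0$ is dual, using the injective modules and the dual of the lemmas). Then $\Ext_H^1(P_i^H,N) = 0$ since $\pdim(P_i^H) = 0$, and likewise $\Ext_T^1(P_i^T,Y) = 0$ since $T$ is hereditary and $P_i^T$ is projective; so the $\Ext$ equality reads $0 = 0$. For the $\Hom$ equality, note that $\dim\Hom_H(P_i^H,N)$ equals the $i$-th coordinate of $\rkv(N)$ multiplied by $c_i$... more precisely, by \cite[Proposition~4.1]{GLS1} and the formula for $\bil{-,-}_H$ on the basis $\alpha_j = \rkv(E_j)$, one has $\dim\Hom_H(P_i^H,N) - \dim\Ext_H^1(P_i^H,N) = \bil{\rkv(P_i^H),\rkv(N)}_H$, and the left-hand $\Ext$ term vanishes; the analogous identity holds on the $T$-side with $\dim\Hom_T(P_i^T,Y) = \bil{\dimv(P_i^T),\dimv(Y)}_T$. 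By Proposition~\ref{prop-preproj1} we have $\rkv(N) = \dimv(Y)$, and by Proposition~\ref{prop-dimrank}(i),(iv) the two bilinear forms agree on these dimension/rank vectors. Hence $\dim\Hom_H(P_i^H,N) = \dim\Hom_T(P_i^T,Y)$.

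\medskip

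\emph{Inductive step.} Assume now $k \ge 1$ and $s \ge 1$, so neither $M$ nor $N$ is projective, and neither $X$ nor $Y$ is projective. Apply $\tau_H$ to $M$ and $N$: then $\tau_H(M) = \tau_H^{-(k-1)}(P_i^H)$ and $\tau_H(N) = \tau_H^{-(s-1)}(P_j^H)$ are again preprojective $H$-modules, with the corresponding preprojective $T$-modules $\tau_T(X) = \tau_T^{-(k-1)}(P_i^T)$ and $\tau_T(Y) = \tau_T^{-(s-1)}(P_j^T)$ by Proposition~\ref{prop-preproj1}. By Lemma~\ref{lemma-pseudo2}(i),(ii) applied to $M,N$ (legitimate since both are non-projective and $\tau$-locally free) we get $\dim\Hom_H(M,N) = \dim\Hom_H(\tau_H M,\tau_H N)$ and $\dim\Ext_H^1(M,N) = \dim\Ext_H^1(\tau_H M,\tau_H N)$; by Lemma~\ref{lemma-pseudo1}(i),(ii) applied to $X,Y$ we get the same on the $T$-side. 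Now $k+s$ has decreased to $(k-1)+(s-1)$, so by the induction hypothesis $\dim\Hom_H(\tau_H M,\tau_H N) = \dim\Hom_T(\tau_T X,\tau_T Y)$ and similarly for $\Ext^1$. Chaining these three equalities gives the claim. The main obstacle — and the place where one must be slightly careful — is verifying the hypotheses of Lemma~\ref{lemma-pseudo2}, namely that preprojective modules are $\tau$-locally free so that the $\Hom$- and $\Ext$-invariance holds; this is where \cite[Proposition~11.6]{GLS1} and the results of \cite[Section~11.1]{GLS1} are essential, and the rest is bookkeeping with the identifications already established in Propositions~\ref{prop-dimrank} and \ref{prop-preproj1}.
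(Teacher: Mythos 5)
Your proposal follows essentially the same route as the paper: reduce to the situation where one side is a projective by applying the $\tau$-invariance of $\Hom$ and $\Ext^1$ (Lemmas~\ref{lemma-pseudo1} and \ref{lemma-pseudo2}), relying on the fact that preprojective $H$-modules are $\tau$-locally free. The paper applies $\tau^{\min(k,s)}$ in one step and then distinguishes the cases $k\le s$ and $k>s$, while you phrase it as an induction on $k+s$; this is only a cosmetic difference. Your observation that the $\Ext^1$ equality follows for free from the $\Hom$ equality together with Proposition~\ref{prop-dimrank}(iv) is also what the paper does (though the paper's displayed calculation computes $\dim\Hom$ via composition multiplicities rather than via the bilinear form, the two are equivalent).

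There is, however, a genuine gap in your base case. Your induction terminates at $k=0$ or at $s=0$ (and when $k>s$ it necessarily terminates at $s=0$ with $k-s>0$). You dispose of $k=0$ correctly: $P_i^H$ (resp.\ $P_i^T$) is projective, so $\Ext^1$ vanishes on both sides and the $\Hom$ dimensions agree by Propositions~\ref{prop-preproj1} and \ref{prop-dimrank}. But you then assert that ``the case $s=0$ is dual, using the injective modules and the dual of the lemmas.'' This is not a duality: when $s=0$ the module $N=P_j^H$ is \emph{projective}, not injective, so $\Ext_H^1(M,N)$ has no reason to vanish, and replacing the lemmas by their duals does not apply. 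The correct treatment of the case $s=0$, $k>0$ (which is what the paper does for $k>s$) is different in kind: one shows $\Hom_H(\tau_H^{-k}(P_i^H),P_j^H)=0$ and $\Hom_T(\tau_T^{-k}(P_i^T),P_j^T)=0$ — this uses the directedness of the preprojective component, not projectivity or injectivity of the target — and then the equality of the $\Ext^1$ dimensions follows from $\bil{\rkv(M),\rkv(N)}_H=\bil{\dimv(X),\dimv(Y)}_T$. You should replace the appeal to injective modules by this vanishing statement, together with a reference to the directedness results in \cite[Section~11.1]{GLS1} for the $H$-side.
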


\begin{proof}
By Proposition~\ref{prop-dimrank} we have 
$\bil{\rkv(M),\rkv(N)}_H = \bil{\dimv(X),\dimv(Y)}_T$.
Since 
\begin{align*}
\bil{\rkv(M),\rkv(N)}_H &= \dim \Hom_H(M,N) - \dim \Ext_H^1(M,N),\\
 \bil{\dimv(X),\dimv(Y)}_T &= \dim \Hom_T(X,Y) - \dim \Ext_T^1(X,Y),
\end{align*}
it is enough to show that
$\dim \Hom_H(M,N) = \dim \Hom_T(X,Y)$.

If $k \le s$, then 
\begin{align*}
\dim \Hom_H(M,N) &= \dim \Hom_H(P_i^H,\tau_H^{-(s-k)}(P_j^H))\\
&= [\tau_H^{-(s-k)}(P_j^H):S_i^H]\,\dim \End_H(S_i^H)\\
&= [\tau_H^{-(s-k)}(P_j^H):S_i^H]\\
&= \dimv(\tau_H^{-(s-k)}(P_j^H))_i\\
&= c_i(\rkv(\tau_H^{-(s-k)}(P_j^H)))_i,
\end{align*}
and
\begin{align*}
\dim \Hom_T(X,Y) &= \dim \Hom_T(P_i^T,\tau_T^{-(s-k)}(P_j^T))\\
&= [\tau_T^{-(s-k)}(P_j^T):S_i^T]\,\dim \End_T(S_i^T)\\
&= c_i(\dimv(\tau_H^{-(s-k)}(P_j^T)))_i.
\end{align*}
If $k > s$, then
\begin{align*}
\Hom_H(M,N) &\cong \Hom_H(\tau_H^{-(k-s)}(P_i^H),P_j^H) = 0,
\end{align*}
and
\begin{align*}
\Hom_T(X,Y) &\cong \Hom_T(\tau_T^{-(k-s)}(P_i^T),P_j^T) = 0.
\end{align*}
This finishes the proof.
\end{proof}

\begin{Cor}
Let $M = \tau_H^{-k}(P_i^H)$ and
$N = \tau_H^{-s}(P_j^H)$ be preprojective $H$-modules.
Then we have
\begin{align*}
\dim \Hom_H(M,N) &= c_i(\rkv(\tau_H^{-(s-k)}(P_j^H)))_i,\\
\dim \Ext_H^1(M,N) &= c_j(\rkv(\tau_H^{-(k-s-1)}(P_i^H)))_j.
\end{align*}
\end{Cor}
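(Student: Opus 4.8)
The plan is to derive this Corollary directly from Proposition~\ref{prop-preproj3} together with the hereditary representation theory of the tensor algebra $T$, transported back to $H$. By Proposition~\ref{prop-preproj3} we have $\dim\Hom_H(M,N) = \dim\Hom_T(X,Y)$ and $\dim\Ext_H^1(M,N) = \dim\Ext_T^1(X,Y)$, where $X = \tau_T^{-k}(P_i^T)$ and $Y = \tau_T^{-s}(P_j^T)$. So it suffices to compute these Hom and Ext dimensions for preprojective modules over the hereditary algebra $T$, and then re-express the answer in terms of rank vectors of preprojective $H$-modules via Proposition~\ref{prop-preproj1}. For the Hom formula, the computation is already essentially carried out inside the proof of Proposition~\ref{prop-preproj3}: when $k \le s$ one uses $\Hom_T(X,Y) \cong \Hom_T(P_i^T, \tau_T^{-(s-k)}(P_j^T))$ (applying $\tau_T^k$, valid since $X$ is preprojective hence $\tau_T^k X = P_i^T$), and then $\dim\Hom_T(P_i^T, Z) = [Z : S_i^T]\dim\End_T(S_i^T) = c_i\,(\dimv Z)_i = c_i\,(\rkv(\tau_H^{-(s-k)}(P_j^H)))_i$ using Proposition~\ref{prop-preproj1}; when $k > s$ both sides are zero, and the formula $c_i(\rkv(\tau_H^{-(s-k)}(P_j^H)))_i$ still gives zero under the convention that $\tau_H^{-m}(P_j^H) = 0$ for $m < 0$. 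This yields the first displayed equality.

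For the second equality, the main idea is to convert $\Ext^1$ into $\Hom$ using the Auslander-Reiten formula over the hereditary algebra $T$. Concretely, for $T$ hereditary one has $\Ext_T^1(X,Y) \cong D\,\overline{\Hom}_T(Y,\tau_T X)$, and since $X$ is preprojective (so $\tau_T X$ has no injective summands when it is nonzero, and there are no nonzero maps from $Y$ factoring through injectives into a preprojective), this reduces to $\dim\Ext_T^1(X,Y) = \dim\Hom_T(Y,\tau_T X)$. Now $\tau_T X = \tau_T^{-(k-1)}(P_i^T)$ and $Y = \tau_T^{-s}(P_j^T)$, so applying the already-established Hom formula with the roles of the two modules swapped and with shifts $s$ and $k-1$ gives $\dim\Hom_T(Y,\tau_T X) = c_j\,(\rkv(\tau_H^{-((k-1)-s)}(P_j^H)))_j = c_j\,(\rkv(\tau_H^{-(k-s-1)}(P_j^H)))_j$, which is exactly the claimed expression. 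One should handle the edge case $k = 0$ (so $X = P_i^T$ is projective, $\tau_T X = 0$) separately: then $\Ext_T^1(P_i^T, Y) = 0$ since $P_i^T$ is projective, and correspondingly $\tau_H^{-(k-s-1)}(P_i^H) = \tau_H^{-(-s-1)}(P_i^H) = 0$ as $-s-1 < 0$, so both sides vanish.

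I would organize the write-up as: first state the convention $\tau_H^{-m}(P) := 0$ for $m < 0$ (or note it is already in force); then invoke Proposition~\ref{prop-preproj3} to pass from $H$ to $T$; then prove the Hom formula over $T$ by the $\tau_T$-shift argument together with $\dim\Hom_T(P_i^T,Z) = c_i(\dimv Z)_i$ and Proposition~\ref{prop-preproj1}; then prove the Ext formula by the AR-duality reduction $\dim\Ext_T^1(X,Y) = \dim\Hom_T(Y,\tau_T X)$ and a second application of the Hom formula. The one point requiring a little care — and the likely main obstacle — is the justification that $\overline{\Hom}_T(Y,\tau_T X) = \Hom_T(Y,\tau_T X)$, i.e. that no nonzero homomorphism $Y \to \tau_T X$ factors through an injective. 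This is standard for hereditary algebras when $\tau_T X$ is preprojective: a map factoring through an injective $I$ would, after composing $Y \to I \to \tau_T X$, force a nonzero map from an injective to a preprojective module, which is impossible (preprojectives have no injective summands and $\Hom_T(I_\ell^T, P) = 0$ for any preprojective $P$ over a connected hereditary algebra of infinite type, while in finite type one argues via the fact that preprojective $=$ all indecomposables and uses the explicit shape of the AR-quiver — or simply invokes \cite[Section~4]{R2} as in Lemma~\ref{lemma-pseudo1}). Given that the excerpt already cites \cite[Section~4]{R2} and \cite[Chapter~4]{ARS} for precisely this kind of hereditary AR-theory, I would simply cite those for the identity $\dim\Ext_T^1(X,Y) = \dim\Hom_T(\tau_T^{-}Y, X) = \dim\Hom_T(Y,\tau_T X)$ on preprojectives and keep the argument short.
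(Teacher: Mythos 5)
Your proposal is essentially the same argument as the paper's, just carried out over the hereditary tensor algebra $T$ rather than over $H$ itself, and it is correct up to one transcription slip. The paper's two-line proof applies the Auslander--Reiten formula directly over $H$, namely $\dim\Ext_H^1(M,N) = \dim\Hom_H(\tau_H^{-1}(N),M)$, justified by the remark that $N$ has injective dimension at most one, and then invokes the already-established Hom formula with $M$ and $\tau_H^{-1}(N)$ in the two slots. You instead pass to $T$ via Proposition~\ref{prop-preproj3}, use the dual AR form $\dim\Ext_T^1(X,Y)=\dim\Hom_T(Y,\tau_T X)$ over the hereditary algebra $T$, and translate back with Proposition~\ref{prop-preproj1}. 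The trade-off is cosmetic: the paper's route needs the AR formula for the $1$-Iwanaga--Gorenstein algebra $H$ applied to a module of finite injective dimension (which is why they include the parenthetical about stable Hom), whereas your route stays inside classical hereditary AR theory at the cost of going to $T$ and back twice. Your discussion of why $\overline{\Hom}_T(Y,\tau_T X)=\Hom_T(Y,\tau_T X)$ is exactly the point the paper deals with on the $H$-side; for a hereditary algebra this holds for any non-projective $X$ because an injective submodule of the indecomposable non-injective module $\tau_T X$ would split off, so your citation of \cite{R2} suffices, and you need not worry about finite versus infinite representation type.

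The one thing to fix: in the last displayed computation you wrote $c_j\,(\rkv(\tau_H^{-((k-1)-s)}(P_j^H)))_j$, but applying your Hom formula to $\Hom_T(\tau_T^{-s}(P_j^T),\,\tau_T^{-(k-1)}(P_i^T))$ (source index $j$, target index $i$) gives $c_j\,(\rkv(\tau_H^{-(k-s-1)}(P_i^H)))_j$, with $P_i^H$ rather than $P_j^H$; this is what the statement asserts, so the slip is only in transcription, not in the reasoning. Also, your ``convention'' $\tau_H^{-m}(P)=0$ for $m<0$ is not a convention but a fact: for $m<0$ this is $\tau_H^{|m|}$ applied to a projective, and $\tau_H$ kills projectives, which also covers your $k=0$ edge case.
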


\begin{proof}
The first equality follows from the proof of Proposition~\ref{prop-preproj3}.
The second equality follows from combining the first equality with the Auslander-Reiten formula
$$
\dim \Ext_H^1(M,N) = \dim \Hom_H(\tau_H^{-1}(N),M).
$$
(Since $N$ is a module of injective dimension at most one, we don't need to consider stable homomorphism spaces.)  
\end{proof}

The dimension vectors (resp. rank vectors) of the preprojective
$T$-modules (resp. $H$-modules) are positive roots of the Kac-Moody
Lie algebra $\g(C)$. (For $H$-modules, see \cite[Lemma 3.2 and Proposition 11.5]{GLS1}.)
A positive root $\alpha \in \Delta^+$ is $H$-\emph{preprojective}
if $\alpha$ is the dimension vector of a preprojective $T$-module
(or equivalently the rank vector of a preprojective $H$-module).
In this case let $X(\alpha)$ be the preprojective $T$-module with
dimension vector $\alpha$, and let $M(\alpha)$ be the preprojective
$H$-module with rank vector $\alpha$.

As a direct consequence of Proposition~\ref{prop-preproj3}
we get the following result.

\begin{Cor}\label{cor-preproj2}
Let $\gamma_1,\ldots,\gamma_t$ be $H$-preprojective positive roots.
Then
$
X(\gamma_1) \oplus \cdots \oplus X(\gamma_t)
$
is rigid if and only if
$
M(\gamma_1) \oplus \cdots \oplus M(\gamma_t)
$
is rigid.
\end{Cor}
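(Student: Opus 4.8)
The plan is to deduce the corollary directly from Proposition~\ref{prop-preproj3} by reducing rigidity to a comparison of $\Ext^1$-groups over $H$ and over $T$. Recall that a module $Z$ is rigid if and only if $\Ext^1(Z,Z)=0$. First I would set $M := M(\gamma_1)\oplus\cdots\oplus M(\gamma_t)$ and $X := X(\gamma_1)\oplus\cdots\oplus X(\gamma_t)$, and observe that additivity of $\Ext^1$ in each variable gives
\[
\dim\Ext_H^1(M,M) = \sum_{k,\ell=1}^t \dim\Ext_H^1(M(\gamma_k),M(\gamma_\ell)),
\qquad
\dim\Ext_T^1(X,X) = \sum_{k,\ell=1}^t \dim\Ext_T^1(X(\gamma_k),X(\gamma_\ell)).
\]
Since each $\gamma_m$ is an $H$-preprojective positive root, we may write $M(\gamma_m)=\tau_H^{-k_m}(P_{i_m}^H)$ and $X(\gamma_m)=\tau_T^{-k_m}(P_{i_m}^T)$ with the same data $(i_m,k_m)$, by definition of $X(\gamma_m)$ and $M(\gamma_m)$ together with Proposition~\ref{prop-preproj1}. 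Then Proposition~\ref{prop-preproj3}, applied to each pair $(M(\gamma_k),M(\gamma_\ell))$ and $(X(\gamma_k),X(\gamma_\ell))$, yields $\dim\Ext_H^1(M(\gamma_k),M(\gamma_\ell)) = \dim\Ext_T^1(X(\gamma_k),X(\gamma_\ell))$ for all $k,\ell$.

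Summing over all $k,\ell$, this gives $\dim\Ext_H^1(M,M) = \dim\Ext_T^1(X,X)$. Hence $\Ext_H^1(M,M)=0$ if and only if $\Ext_T^1(X,X)=0$, which is exactly the assertion that $M$ is rigid if and only if $X$ is rigid.

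There is essentially no obstacle here: the only slightly delicate point is that $\Ext^1$ need not be symmetric in its two arguments, so one must keep track of both orderings $(k,\ell)$ in the double sum rather than just the pairs with $k\le\ell$; but Proposition~\ref{prop-preproj3} is stated for arbitrary preprojective modules $M,N$, so it applies to each ordered pair directly. One could also streamline the write-up by invoking Proposition~\ref{prop-preproj3} only once, applied to the (preprojective, since closed under $\tau^{-}$ up to multiplicities — but note $M$ itself is not indecomposable) pair; to stay within the hypotheses of Proposition~\ref{prop-preproj3}, which concerns indecomposable preprojectives, it is cleanest to argue componentwise as above.
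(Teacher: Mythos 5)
Your proof is correct and matches the paper's intent: the paper states the corollary as an immediate consequence of Proposition~\ref{prop-preproj3}, and your argument (expand $\Ext^1$ of the direct sums bilinearly, identify $M(\gamma_m)$ and $X(\gamma_m)$ via the same data $(i_m,k_m)$ using Proposition~\ref{prop-preproj1}, then apply Proposition~\ref{prop-preproj3} to each ordered pair) is exactly the argument the paper has in mind. Your remark about needing to argue componentwise, since Proposition~\ref{prop-preproj3} is stated for indecomposables, is a correct and appropriate observation.
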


Given $H$, the rank vectors of the modules $P_i^H$ are known \cite[Lemma 3.2]{GLS1}.
For any $H$-preprojective positive root $\alpha$, we have
$\alpha = c_H^{-k}(\rkv(P_i^H))$ for uniquely determined $1 \le i \le n$ and $k \ge 0$.
Thus the computation of the dimensions of Hom and Ext spaces
for preprojective modules is purely combinatorial.

A tuple of indecomposable $H$-modules $(M_1,\ldots,M_t)$ is
a \emph{cycle} if there exists a non-zero non-isomorphism $M_i \to M_{i+1}$ for all $1 \le i \le t-1$ and $M_1 \cong M_t$.
Such a cycle is a \emph{strict cycle} if additionally $M_i \not\cong M_{i+1}$ for all $1 \le i \le t-1$.

As a consequence of Proposition~\ref{prop-preproj3} and the corresponding well known statement for preprojective $T$-modules, we get the following result.

\begin{Cor}\label{prop-preproj4}
There are no strict cycles consisting of preprojective $H$-modules.
\end{Cor}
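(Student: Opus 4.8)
The plan is to transport the statement from the hereditary tensor algebra $T=T(C,D,\Omega)$ to $H$ using Proposition~\ref{prop-preproj3}. The key point is that for preprojective $H$-modules, Hom-spaces vanish exactly when the corresponding Hom-spaces between preprojective $T$-modules vanish; in particular, a nonzero non-isomorphism $M_i\to M_{i+1}$ between preprojective $H$-modules exists if and only if a nonzero non-isomorphism $X_i\to X_{i+1}$ exists between the corresponding preprojective $T$-modules. Indeed, $\dim\Hom_H(M_i,M_{i+1})=\dim\Hom_T(X_i,X_{i+1})$ by Proposition~\ref{prop-preproj3}, and $M_i\cong M_{i+1}$ if and only if $\rkv(M_i)=\rkv(M_{i+1})$ if and only if $\dimv(X_i)=\dimv(X_{i+1})$ if and only if $X_i\cong X_{i+1}$, since a preprojective module (over either algebra) is determined up to isomorphism by its rank vector (resp.\ dimension vector); this uses \cite[Proposition 11.5]{GLS1} and Proposition~\ref{prop-preproj1}.

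First I would set up the correspondence: given a strict cycle $(M_1,\ldots,M_t)$ of preprojective $H$-modules with $M_i\cong\tau_H^{-k_i}(P_{a_i}^H)$, let $X_i=\tau_T^{-k_i}(P_{a_i}^T)$ be the corresponding preprojective $T$-modules. By the paragraph above, the data of a nonzero non-isomorphism $M_i\to M_{i+1}$ for $1\le i\le t-1$ together with $M_1\cong M_t$ and $M_i\not\cong M_{i+1}$ translates verbatim into a strict cycle $(X_1,\ldots,X_t)$ of preprojective $T$-modules. Then I would invoke the classical fact that the preprojective component of the Auslander-Reiten quiver of a hereditary algebra is directed (it contains no cyclic paths of nonzero non-isomorphisms between indecomposables), which rules out strict cycles of preprojective $T$-modules; this is standard, see e.g.\ \cite[Section~4]{R2} or \cite[Chapter~VIII]{ARS}. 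Transporting back, no strict cycle of preprojective $H$-modules can exist.

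The main obstacle, and the only genuinely delicate point, is ensuring that ``nonzero non-isomorphism'' is faithfully preserved in both directions, i.e.\ that one cannot have a map that is nonzero but not an isomorphism on one side corresponding to an isomorphism (or to zero) on the other. This is handled precisely because $\dim\Hom$ is preserved \emph{and} the isomorphism type is detected by the rank/dimension vector for preprojectives: if $\dim\Hom_T(X_i,X_{i+1})>0$ then $\dim\Hom_H(M_i,M_{i+1})>0$ so a nonzero map exists on the $H$-side, and it fails to be an isomorphism precisely because $\rkv(M_i)\ne\rkv(M_{i+1})$ (equivalently $X_i\not\cong X_{i+1}$). One should also note that an abstract cycle (not strict) always collapses to a strict one by deleting consecutive repeated entries, so it suffices to exclude strict cycles; hence the statement as phrased is exactly what the transport argument gives.
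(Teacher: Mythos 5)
Your proof is correct and follows the same route the paper intends: the paper's proof of Corollary~\ref{prop-preproj4} is a one-line reduction to Proposition~\ref{prop-preproj3} together with the well-known directedness of the preprojective component of the hereditary algebra $T$, and your write-up simply spells out this transport argument (matching of $\dim\Hom$ via Proposition~\ref{prop-preproj3}, matching of isomorphism type via the rank-vector/dimension-vector correspondence from Propositions~\ref{prop-dimrank} and~\ref{prop-preproj1}). The closing remark about collapsing non-strict cycles to strict ones is unnecessary, since the corollary only asserts the absence of \emph{strict} cycles, but it does no harm.
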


\subsection{Example}
Let 
$H = H(C,D,\Omega)$ be defined by the quiver
$$
\xymatrix{
1 \ar@(ul,ur)^{\vep_1}& \ar[l]^{\alpha_{12}} 2\ar@(ul,ur)^{\vep_2} & 
\ar[l] 3 & \ar[l] 4
}
$$
with relations $\vep_1^2 = \vep_2^2 = 0$ and $\vep_1\alpha_{12}
= \alpha_{12}\vep_2$.
Thus $H$ is of Dynkin type $F_4$.
Let $T = T(C,D,\Omega)$ be the corresponding hereditary algebra
over some suitable ground field $F$.
The Auslander-Reiten quiver of $T$ 
is shown in Figure~\ref{F4C1}.
The vertices of the quiver are the positive roots and stand for the
preprojective $T$-modules $X(\alpha)$.
(Since $C$ is of Dynkin type, all indecomposable $T$-modules
are preprojective.)
For each non-projective $X(\alpha)$ there is a dashed arrow
$\xymatrix{\tau_T(X(\alpha)) & X(\alpha) \ar@{-->}[l]}$.
Recall that we have $c_T(\dimv(X(\alpha))) = \dimv(\tau_T(X(\alpha)))$ and
$c_H(\rkv(M(\alpha))) = \rkv(\tau_H(M(\alpha)))$.
We can use now Figure~\ref{F4C1} to compute for example
the dimension of $\Hom_H(M(2,3,4,2),M(1,2,3,2))$.
We have $\tau_H^2(M(2,3,4,2)) \cong M(1,1,0,0) = P_2^H$ and
$\tau_H^2(M(1,2,3,2)) \cong M(1,2,2,1)$.
Thus 
\begin{align*}
\dim \Hom_H(M(2,3,4,2),M(1,2,3,2))
&=
\dim \Hom_H(P_2^H,M(1,2,2,1)) \\
&= [M(1,2,2,1):S_2^H]\\
&= c_2(\rkv(M(1,2,2,1)))_2\\
&= 2(1,2,2,1)_2 = 4.
\end{align*}
%

\subsection{Geometry of extension varieties}\label{extgeometry}
Let $H = H(C,D,\Omega)$.
Let $M$ and $N$ be rigid locally free $H$-modules with
$\Ext_H^1(N,M) = 0$.
Let
$\bd = (d_1,\ldots,d_n) = \dimv(M) + \dimv(N)$
and $\br = (r_1,\ldots,r_n)$ with $r_i = d_i/c_i$ for all $i$.
Let $\cE(M,N)$ be the set of all $E \in \rep(H,\bd)$ such that
there exists a short exact sequence
$$
0 \to N \to E \to M \to 0.
$$
It follows from \cite[Theorem~1.3]{CBS} that $\cE(M,N)$ is irreducible and open
in $\rep(H,\bd)$.
Furthermore, since $\repvp(H)$ is closed under extensions, we know
that $\cE(M,N)$ is contained in $\repvp(H,\br)$.
By \cite[Proposition~3.1]{GLS2} 
we know that $\repvp(H,\br)$ is irreducible and open in $\rep(H,\bd)$.
Thus $\cE(M,N)$ is open and dense in $\repvp(H,\br)$.

Suppose now additionally, that $\repvp(H,\br)$ contains a rigid 
module $R$.
Since $\cE(M,N)$ is $G_\bd$-stable, open and dense in $\repvp(H,\br)$ and since the $G_\bd$-orbit of
a rigid module is also open, we get that $R \in \cE(M,N)$.
In other words, there exists a short exact sequence
$$
0 \to N \to R \to M \to 0.
$$

\subsection{Construction of pseudo Auslander-Reiten sequences}
As before, let $H = H(C,D,\Omega)$.
Let $\alpha$ be an $H$-preprojective positive root such that
$\alpha$ is not the rank vector of a projective $H$-module.
The Auslander-Reiten sequence in $\md(T)$ ending in $X(\alpha)$
is of the form
$$
0 \to \tau_T(X(\alpha)) \to \bigoplus_{i=1}^t X(\gamma_i)^{m_i}
\to X(\alpha) \to 0
$$
where $\gamma_1,\ldots,\gamma_t$ are pairwise different $H$-preprojective roots and $m_i \ge 1$ for all $i$.

\begin{Thm}\label{thm-pseudoAR}
There is a short exact sequence
$$
0 \to \tau_H(M(\alpha)) \to \bigoplus_{i=1}^t M(\gamma_i)^{m_i}
\to M(\alpha) \to 0
$$
of $H$-modules.
\end{Thm}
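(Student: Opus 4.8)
The plan is to transport the Auslander--Reiten sequence in $\md(T)$ to a short exact sequence in $\repvp(H)$ by a dimension count combined with the geometric input of Section~\ref{extgeometry}. First I would set $M := M(\alpha)$ and $N := \tau_H(M(\alpha))$; both are preprojective $H$-modules by hypothesis and by Lemma~\ref{lemma-pseudo2}(iii) and Proposition~\ref{prop-preproj1}, so in particular they are locally free and rigid (preprojective $T$-modules are rigid, being on the AR-quiver of a hereditary algebra of Dynkin type — actually in general a preprojective module over a hereditary algebra has no self-extensions — and rigidity transports to $H$ by Corollary~\ref{cor-preproj2}). By the same token each $M(\gamma_i)$ is rigid locally free, and one needs rigidity of the whole middle term $\bigoplus_i M(\gamma_i)^{m_i}$: this again follows from Corollary~\ref{cor-preproj2}, since the corresponding $T$-module $\bigoplus_i X(\gamma_i)^{m_i}$ is the middle term of an AR-sequence with rigid ends, hence rigid.

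Next I would check the numerical compatibility. Set $\bd := \dimv(M) + \dimv(N)$ in the sense of rank vectors, i.e. $\br := \rkv(M) + \rkv(N) = \rkv(\bigoplus_i M(\gamma_i)^{m_i})$; the last equality holds because the corresponding identity of dimension vectors holds for the AR-sequence over $T$ and because $\rkv(M(\gamma)) = \dimv(X(\gamma))$ for every $H$-preprojective root $\gamma$ (Proposition~\ref{prop-preproj1}). Then I want to apply the discussion in Section~\ref{extgeometry} with the roles ``$M$'' $:= M(\alpha)$ and ``$N$'' $:= \tau_H(M(\alpha))$: for that I must verify $\Ext_H^1(M(\alpha),\tau_H(M(\alpha))) = 0$. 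By the Auslander--Reiten formula (used in the proof of the Corollary after Proposition~\ref{prop-preproj3}) this Ext group has the same dimension as $\Hom_H(M(\alpha), M(\alpha))$ up to the usual AR-duality manipulation; more cleanly, $\Ext^1_H(M(\alpha),\tau_H M(\alpha)) \cong D\,\overline{\Hom}_H(M(\alpha), M(\alpha))$, and since $M(\alpha)$ is preprojective and not projective the stable endomorphisms vanish, giving $\Ext^1 = 0$. (Alternatively, transport to $T$ via Proposition~\ref{prop-preproj3}: over the hereditary algebra $T$ one has $\Ext^1_T(X(\alpha), \tau_T X(\alpha)) = 0$ because $\tau_T X(\alpha)$ appears strictly earlier in the preprojective component, and then apply Proposition~\ref{prop-preproj3}.) With $\Ext^1_H(M(\alpha),\tau_H M(\alpha)) = 0$ in hand, Section~\ref{extgeometry} tells us that $\cE(\tau_H M(\alpha), M(\alpha))$ is open and dense in $\repvp(H,\br)$.

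Finally I would invoke the second half of Section~\ref{extgeometry}: the variety $\repvp(H,\br)$ contains the rigid module $R := \bigoplus_{i=1}^t M(\gamma_i)^{m_i}$, whose $G_\bd$-orbit is therefore open in the irreducible variety $\repvp(H,\br)$ (openness of orbits of rigid modules is exactly the input recalled there, resting on Proposition~\ref{fibredim1}). An open orbit meets the open dense $G_\bd$-stable subset $\cE(\tau_H M(\alpha), M(\alpha))$, so $R \in \cE(\tau_H M(\alpha), M(\alpha))$, which by definition yields a short exact sequence
\[
0 \to \tau_H(M(\alpha)) \to \bigoplus_{i=1}^t M(\gamma_i)^{m_i} \to M(\alpha) \to 0,
\]
as desired. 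The main obstacle I anticipate is not any single hard estimate but rather assembling the rigidity claims correctly: one must be sure that \emph{all} three terms (both ends and the middle) are rigid locally free modules so that Corollary~\ref{cor-preproj2} and the orbit-openness argument apply, and one must correctly check the vanishing $\Ext_H^1(M(\alpha),\tau_H M(\alpha)) = 0$ that licenses the use of \cite[Theorem~1.3]{CBS} for irreducibility of $\cE$. Everything else is a transport of known facts about the hereditary algebra $T$ across Propositions~\ref{prop-dimrank}, \ref{prop-preproj1} and \ref{prop-preproj3}.
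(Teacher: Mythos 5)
Your overall strategy is the same as the paper's (transport rigidity of the $T$-module middle term to $H$, then use Section~\ref{extgeometry} and the openness of rigid orbits to locate $\bigoplus_i M(\gamma_i)^{m_i}$ inside the extension variety). However, there is a genuine error in the verification of the key hypothesis needed to invoke Section~\ref{extgeometry}, and it is not just a notational slip.

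With the paper's convention, $\cE(M,N)$ parametrizes modules $E$ fitting into $0\to N\to E\to M\to 0$, and the standing hypothesis there is $\Ext_H^1(N,M)=0$. You correctly fix the roles $M:=M(\alpha)$ and $N:=\tau_H(M(\alpha))$, so the vanishing you must verify is $\Ext_H^1(\tau_H M(\alpha),\,M(\alpha))=0$ — not $\Ext_H^1(M(\alpha),\,\tau_H M(\alpha))=0$ as you wrote. This matters because the group you try to show vanishes is in fact \emph{nonzero}: $\Ext_H^1(M(\alpha),\tau_H M(\alpha))$ is precisely the group that contains the class of the AR-sequence ending at $M(\alpha)$. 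Your two attempted justifications both fail for this reason. First, for a non-projective indecomposable module the stable endomorphism space does \emph{not} vanish — the identity map does not factor through a projective — so $D\,\underline{\Hom}(M(\alpha),M(\alpha))\ne 0$; that nonvanishing is exactly what produces the AR-sequence. Second, the observation that $\tau_T X(\alpha)$ sits strictly earlier in the preprojective component gives $\Hom_T(X(\alpha),\tau_T X(\alpha))=0$, but says nothing about $\Ext^1_T(X(\alpha),\tau_T X(\alpha))$, which is nonzero. The correct vanishing to prove is $\Ext^1_T(\tau_T X(\alpha),X(\alpha))=0$ (true: by the AR-formula this is dual to $\Hom_T(X(\alpha),\tau_T X(\alpha))=0$), and then one transports it to $H$ via Proposition~\ref{prop-preproj3} — which is exactly what the paper does in one line. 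You also later write $\cE(\tau_H M(\alpha),M(\alpha))$ where, given your own assignment of roles, it should be $\cE(M(\alpha),\tau_H M(\alpha))$; this is the same sign/order confusion surfacing again. Once the roles of $M$ and $N$ are straightened out and the correct $\Ext^1$-vanishing is checked, the rest of your argument — the rigidity of the middle term via Corollary~\ref{cor-preproj2}, the rank-vector bookkeeping, and the openness-of-orbit conclusion — is sound and coincides with the paper's proof.
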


\begin{proof}
We have $\Ext_T^1(\tau_T(X(\alpha)),X(\alpha)) = 0$, and therefore
$\Ext_H^1(\tau_H(M(\alpha)),M(\alpha)) = 0$.
From Section~\ref{extgeometry} we know that $\cE(M(\alpha),\tau_H(M(\alpha)))$ is $G_\bd$-stable, open and dense in $\repvp(H,\br)$ where
$\bd = (d_1,\ldots,d_n) = \dimv(M(\alpha)) + \dimv(\tau_H(M(\alpha)))$
and $\br = (r_1,\ldots,r_n)$ with $r_i = d_i/c_i$ for all $i$.
Since $\bigoplus_{i=1}^t X(\gamma_i)^{m_i}$ is a rigid $T$-module,
we know that $R := \bigoplus_{i=1}^t M(\gamma_i)^{m_i}$ is a rigid locally free $H$-module with dimension vector $\bd$. 
Thus the $G_\bd$-orbit of $R$ is open and dense in $\repvp(H,\br)$.
It follows that $R \in \cE(M(\alpha),\tau_H(M(\alpha)))$.
This finishes the proof.
\end{proof}

The short exact sequence appearing in Theorem~\ref{thm-pseudoAR}
is called a \emph{pseudo Auslander-Reiten sequence}, or short
a \emph{pseudo AR-sequence}.


Let 
$$
0 \to X \xrightarrow{f} Y \xrightarrow{g} Z \to 0
$$ 
be a pseudo AR-sequence of preprojective $H$-modules, and let $M$ be an indecomposable $\tau$-locally free $H$-module 
which is not isomorphic to $Z$.
Let $h\df M \to Z$ be a non-zero homomorphism.
By \cite[Lemma~11.7 and Theorem~11.10]{GLS1} this implies that $M$ is preprojective.
By the Auslander-Reiten formula and since $M$ is locally free, we get
that $\Ext_H^1(M,X) \cong D\Hom_H(\tau_H^{-1}(X),M)$.
By Proposition~\ref{prop-preproj4} and the fact that $\tau_H^{-1}(X) \cong Z$ we get $\Ext_H^1(M,X) = 0$.
Thus $h$ factors through $g$.
Similarly, if $M$ is an indecomposable $\tau$-locally free $H$-module 
which is not isomorphic to $X$, and if $h\df X \to M$ is a non-zero homomorphism, then $h$ factors through $f$.
This shows that pseudo Auslander-Reiten sequences share at least some properties with Auslander-Reiten sequences.
However, note that for genuine Auslander-Reiten sequences, the above factorization properties hold
for \emph{every} indecomposable module $M$, not just for the $\tau$-locally free ones.


\section{Construction of primitive elements}\label{sec-primitive}


Let $C$ be a Cartan matrix of Dynkin type, let $D$ be the minimal
symmetrizer of $C$, and let $\Omega$ be an orientation for $C$.
Let $H = H(C,D,\Omega)$ and $\cM = \cM(H)$.
Recall that $\cP(\cM)$ denotes the Lie algebra of primitive elements
in $\cM$.
Let $\Delta^+ = \Delta^+(C)$ be the set of positive roots of the 
Lie algebra $\g(C)$.
For $\gamma \in \Delta^+$ let
$\cP(\cM)_\gamma$
be the subspace of $\cP(\cM)$ of elements of degree $\gamma$.
The aim of Section~\ref{sec-primitive} is to prove the following result, which implies that $\cP(\cM)_\gamma$ is non-zero.

\begin{Thm}\label{thm-primitive}
For each $\gamma \in \Delta^+$ there exists a primitive
element $\theta_\gamma \in \cP(\cM)_\gamma$ such that
$\theta_\gamma(M(\gamma)) = 1$.
\end{Thm}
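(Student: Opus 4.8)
The plan is to induct on the height of $\gamma$, using the pseudo Auslander-Reiten sequences of Section~\ref{section-pseudo} to transport relations among the $\theta_i$ across the Coxeter orbit. First I would observe that for $\gamma = \alpha_i$ a simple root the element $\theta_i$ itself works: it is primitive by construction and $\theta_i(M(\alpha_i)) = \theta_i(E_i) = 1$. For the inductive step, fix $\gamma \in \Delta^+$ which is not simple, and write $\gamma = c_H^{-k}(\rkv(P_i^H))$ for the unique $1 \le i \le n$ and $k \ge 0$; since $\gamma$ is not a projective rank vector we have $k \ge 1$ (after possibly reordering we can assume $\gamma$ is not the rank vector of any $P_j^H$, those cases being handled separately). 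Apply Theorem~\ref{thm-pseudoAR} to get a short exact sequence
\[
0 \to M(\gamma') \to \bigoplus_{l=1}^t M(\gamma_l)^{m_l} \to M(\gamma) \to 0,
\]
where $M(\gamma') = \tau_H(M(\gamma))$, so $\gamma' = c_H(\gamma)$ has strictly smaller $k$, and the $\gamma_l$ are $H$-preprojective roots appearing in the middle term of the AR sequence for $X(\gamma)$ in $\md(T)$; crucially, by Proposition~\ref{prop-preproj3} all the $\gamma_l$ have height strictly less than $\gamma$ as well (this uses that the middle term of an AR sequence for a preprojective module is a direct sum of preprojectives lower in the preprojective order, hence of smaller total rank). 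So by induction we have primitive elements $\theta_{\gamma'}$ and $\theta_{\gamma_l}$ with the required normalization.

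The heart of the argument is then to produce $\theta_\gamma$ from the $\theta_{\gamma_l}$ (and possibly $\theta_{\gamma'}$) by an iterated bracket and check the normalization $\theta_\gamma(M(\gamma)) = 1$. The mechanism I expect to use: the middle term $\bigoplus_l M(\gamma_l)^{m_l}$ of the pseudo AR-sequence has rank vector $\gamma + \gamma'$, but more useful is to exploit that in $U(\n)$ the root vector $e_\gamma$ can be written as an explicit bracket of the $e_{\gamma_l}$ and $e_{\gamma'}$ dictated by the AR-sequence (this is the Lie-theoretic shadow of the AR-quiver of $T$, and in Dynkin type $\dim \n_\gamma = 1$). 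Then set $\theta_\gamma := \eta_H(e_\gamma)$, which lies in $\cP(\cM)_\gamma$ by Section~\ref{sec-weight}; the content is that $\theta_\gamma \ne 0$, equivalently $\theta_\gamma(M(\gamma)) \ne 0$, and then one can rescale. To evaluate $\theta_\gamma(M(\gamma))$ one uses Lemma~\ref{lem-indec} (only the indecomposable $M(\gamma)$ can contribute in degree $\gamma$, since $\gamma$ being a root forces any module of rank vector $\gamma$ supported by a primitive element to be indecomposable) together with formula (\ref{eq.product}): the value of an iterated convolution product on $M(\gamma)$ is the Euler characteristic of a variety of flags with prescribed subquotients $E_{i_j}$. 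Using that $M(\gamma)$ is rigid (Corollary~\ref{cor-preproj2}, since $X(\gamma)$ is rigid in Dynkin type) one controls this Euler characteristic; the pseudo AR-sequence gives a distinguished one-parameter degeneration / filtration whose generic fibre is precisely $M(\gamma)$, forcing the relevant flag variety to have Euler characteristic $1$ after the correct bracketing.

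The main obstacle, I expect, is exactly this last normalization computation: showing the Euler characteristic comes out to a nonzero value (ideally $\pm 1$) rather than $0$. The difficulty is that iterated convolution products can have vanishing Euler characteristic for subtle geometric reasons, and the naive filtration of $M(\gamma)$ refining the pseudo AR-sequence need not directly give a composition-type flag with subquotients among the $E_i$'s. I would handle this by first reducing, via the pseudo AR-sequence and the rigidity of $M(\gamma)$, to the case of $\tau_H(M(\gamma))$ of smaller height: concretely, one shows $\theta_\gamma = c^{-1}\,[\,\cdots[\theta_{i_1}, \theta_{i_2}],\ldots]$ by comparing with the analogous identity in the Hall algebra picture over $T$ (where all modules are preprojective and everything is rigid, so the relevant flag varieties are affine spaces with Euler characteristic $1$), and then transports the identity to $\cM(H)$ using Proposition~\ref{prop-preproj3} to match Hom and Ext dimensions — the point being that the combinatorics of flags of $E_i$'s inside $M(\gamma)$ is governed by the same Hom/Ext data as for $X(\gamma)$ inside $\md(T)$. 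If a clean bracket formula is not available uniformly, I would fall back on a case-by-case verification for types $B_n$, $C_n$, $F_4$, $G_2$ (the non-symmetric Dynkin cases, which is precisely why Section~\ref{sec-primitive} is restricted to the minimal symmetrizer), using the explicit AR-quivers as in the $F_4$ example of Section~\ref{section-pseudo}.
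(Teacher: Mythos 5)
Your primary inductive mechanism has a fundamental problem: the pseudo Auslander--Reiten sequence from Theorem~\ref{thm-pseudoAR} has $M(\gamma)$ as the \emph{cokernel}, so it reads
\[
0 \to \tau_H(M(\gamma)) \to \bigoplus_{l=1}^t M(\gamma_l)^{m_l} \to M(\gamma) \to 0,
\]
and the rank vector of the middle term is $\gamma + c_H(\gamma)$, not $\gamma$. Because all degrees in $U(\n)$ and in $\cM$ live in $\N^n$, there is no iterated Lie bracket of the $\theta_{\gamma_l}$ (let alone $\theta_{\gamma'}$) landing in degree $\gamma$: any such bracket has degree $\sum m_l\gamma_l = \gamma + c_H(\gamma)$. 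So ``in $U(\n)$ the root vector $e_\gamma$ can be written as an explicit bracket of the $e_{\gamma_l}$ and $e_{\gamma'}$ dictated by the AR-sequence'' is not a coherent claim. The paper uses pseudo AR-sequences differently: when they enter, it is as sequences $0 \to M(\alpha) \to M(\gamma) \to M(\beta) \to 0$ with $M(\gamma)$ \emph{in the middle} and with $\alpha + \beta = \gamma$; this is the pseudo AR-sequence ending at $M(\beta)$, not the one ending at $M(\gamma)$. Only then does it make sense to try $\theta_\gamma = [\theta_\alpha,\theta_\beta]$.

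The proposed ``transport from $T$'' step is also unsupported. Proposition~\ref{prop-preproj3} matches Hom/Ext \emph{dimensions} between preprojective $H$- and $T$-modules, but the varieties $\cFl_{M(\gamma),\bi}$ over $H$ are not governed by those dimensions alone: an $H$-module typically has many more submodules than the corresponding $T$-module, because the $H_i$ are local rings rather than division rings. (Already in type $B_2$ there are two indecomposable locally free modules with rank vector $(1,1)$, so your claim that ``only the indecomposable $M(\gamma)$ can contribute in degree $\gamma$'' is false as stated, though harmless for non-vanishing.) The change-of-symmetrizer comparison $\chi(\cFl_{M(k),\bi})=\chi(\cFl_{M,\bi})$ from \cite[Corollary 1.3]{GLS2}, used in Section~\ref{sec-proofmain}, compares $H(C,lD,\Omega)$ across $l$ and does \emph{not} give a bridge to the hereditary algebra $T$.

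Your fallback --- case-by-case verification for $B_n$, $C_n$, $F_4$, $G_2$ --- is the route the paper actually takes, but you haven't identified the mechanism that makes it work. The paper's engine is the notion of an $H$-admissible triple $(\alpha,\beta,\gamma)$ (Section~\ref{adm-triples}), with $\gamma = \alpha+\beta$ and conditions (c1)--(c3), plus Lemmas~\ref{strategy1},\ref{strategy2} and Proposition~\ref{strategy3} which guarantee $[\theta_\alpha,\theta_\beta](M(\gamma))=1$. Even this doesn't cover all roots: for doubled roots such as $(1,2)$, $(1,3)$, $(2,3)$ in $G_2$, and certain roots in $B_n$, $C_n$, the paper uses rescaled iterated brackets like $\tfrac12[[\theta_\alpha,\theta_\beta],\theta_\beta]$ and verifies by hand that certain filtration types do not occur; and the $G_2$ root $(2,3)$ is genuinely exceptional, with $\Hom_H(M(\alpha),M(\beta))\neq 0$ and multiple copies of $M(\alpha)$ inside $M(\gamma)$, forcing an explicit convolution computation. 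None of these ingredients appear in your plan, and the part of your plan that is concrete (induction via pseudo AR-sequences with $M(\gamma)$ as cokernel) cannot produce a degree-$\gamma$ element.
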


We prove Theorem~\ref{thm-primitive} for each non-symmetric $C$ of Dynkin type by explicitly constructing
in Sections~\ref{primitive-B_n}, \ref{primitive-C_n}, 
\ref{primitive-F_4}, \ref{primitive-G_2} the elements $\theta_\gamma$. 
For symmetric $C$ of Dynkin type, $H$ is a path algebra of finite type and for each positive root $\gamma$
there is a unique indecomposable $H$-module of dimension vector $\gamma$. Therefore our claim follows from
Schofield's theorem which shows that $\mathfrak{n}$ is identified with the primitive elements of $\cM$.

\subsection{Admissible triples}\label{adm-triples}

A triple 
$(\alpha,\beta,\gamma)$ of positive roots is an $H$-\emph{admissible triple} if the following hold:
\begin{itemize}

\item[(c1)]
$\Hom_H(M(\alpha),M(\beta)) = 0$ and
$\Hom_H(M(\beta),M(\alpha)) = 0$;

\item[(c2)]
There exists a short exact sequence
$$
0 \to M(\alpha) \xrightarrow{f} M(\gamma) \xrightarrow{g} M(\beta)
\to 0;
$$

\item[(c3)]
For each  indecomposable locally free submodule $U$  
of $M(\gamma)$ with $\rkv(U) = \alpha$ we have
$U \cong M(\alpha)$, or for each 
indecomposable locally free factor module $V$  
of $M(\gamma)$ with $\rkv(V) = \beta$ we have
$V \cong M(\beta)$.

\end{itemize}

\begin{Lem}\label{strategy1}
Assume that $(\alpha,\beta,\gamma)$ is a triple of positive roots satisfying {\rm (c2)}.
Then the following hold:
\begin{itemize}

\item[(i)]
$M(\gamma)$ does not have any locally free submodule $U$ with
$\rkv(U)=\beta$;
 
\item[(ii)]
$M(\gamma)$ does not have any locally free factor module $V$ with
$\rkv(V) = \alpha$.

\end{itemize}
\end{Lem}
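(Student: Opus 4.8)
The plan is to argue by contradiction using the short exact sequence in (c2) together with the rigidity/uniqueness features of preprojective modules, in particular Corollary~\ref{prop-preproj4} (no strict cycles among preprojectives) and the combinatorial control over $\Hom$ and $\Ext$ spaces from Proposition~\ref{prop-preproj3}. First I would prove (i). Suppose $U \subseteq M(\gamma)$ is a locally free submodule with $\rkv(U) = \beta$. Since $\repvp(H)$ is closed under submodules (kernels of epimorphisms), $U$ is genuinely locally free, and since the category of locally free modules is closed under the relevant operations, $U$ decomposes into indecomposable locally free summands. The key point is that $M(\gamma)$, being preprojective, has all its indecomposable subquotients preprojective (by \cite[Lemma~11.7 and Theorem~11.10]{GLS1}, as used at the end of Section~\ref{section-pseudo}); hence every indecomposable summand of $U$ is some preprojective $M(\delta)$. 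I would then want to compare $U$ with $M(\beta)$: the composition $U \hookrightarrow M(\gamma) \xrightarrow{g} M(\beta)$ is a map between modules of the same rank vector $\beta$. Either this composite is an isomorphism — which forces $M(\gamma) \cong M(\alpha) \oplus M(\beta)$, contradicting indecomposability of $M(\gamma)$ — or it is not, in which case it is neither injective nor surjective (equal rank vectors), so $\Ima(g|_U)$ is a proper locally free submodule of $M(\beta)$ and $U \cap M(\alpha) = U \cap \Ima(f) \neq 0$; unwinding this yields a nonzero map $M(\alpha) \to M(\beta)$ or $M(\beta) \to M(\alpha)$ exhibiting a strict cycle among preprojectives, contradicting Corollary~\ref{prop-preproj4}.

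For (ii), the argument is dual: a locally free factor module $V$ of $M(\gamma)$ with $\rkv(V) = \alpha$ gives a surjection $M(\gamma) \twoheadrightarrow V$. Precomposing with $f\colon M(\alpha)\to M(\gamma)$ gives a map $M(\alpha)\to V$ between modules of the same rank vector $\alpha$; if it is an isomorphism then $M(\gamma)$ splits off $M(\alpha)$ and hence $M(\gamma)\cong M(\alpha)\oplus M(\beta)$, again contradicting indecomposability; otherwise one extracts a nonzero non-isomorphism among preprojectives producing a strict cycle, contradicting Corollary~\ref{prop-preproj4}. Throughout, I would lean on the fact (Corollary to Proposition~\ref{prop-preproj3}) that all Hom-spaces between preprojective $H$-modules are computed combinatorially from rank vectors, which is what pins down when a map of equal rank vectors must be an isomorphism versus when its image is a proper submodule.

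The main obstacle I anticipate is the bookkeeping in the "not an isomorphism" case: from a map of equal rank vector that is neither mono nor epi, one must actually produce a genuine strict cycle $M_1 \to M_2 \to \cdots \to M_1$ of distinct preprojective modules, rather than just some nonzero endomorphism-like data. Concretely, from $U \subseteq M(\gamma)$ one has the inclusion $U \hookrightarrow M(\gamma)$ and the map $g\colon M(\gamma)\to M(\beta)$, so composing gives $U \to M(\beta)$; if $U$ is indecomposable with $U \not\cong M(\beta)$ this already closes a cycle after noting $M(\beta) \to M(\gamma)$ is impossible but $M(\gamma) \to M(\beta)$ is nonzero and there is a nonzero map $M(\beta) \to$ (some summand of $U$) $\to M(\gamma)$. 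If $U$ is decomposable one has to pick the right indecomposable summand. Making the summand-chasing clean — and checking carefully that each arrow in the putative cycle is nonzero and that consecutive terms are non-isomorphic (using indecomposability of $M(\gamma)$ to rule out the split case) — is the delicate part; the rest is formal once Corollary~\ref{prop-preproj4} is invoked. A secondary subtlety is ensuring the submodules and factor modules that arise are themselves locally free, but this is handled by the closure properties of $\repvp(H)$ recalled in Section~\ref{subsec-loc-free}.
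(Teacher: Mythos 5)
The core structural idea you have---derive a strict cycle of preprojective modules and invoke Corollary~\ref{prop-preproj4}---is the right one, and the "isomorphism" branch of your case analysis (a splitting $M(\gamma)\cong M(\alpha)\oplus M(\beta)$ contradicting indecomposability) is correct as far as it goes. But the other branch has a genuine gap, and it is precisely where the paper does something you have not reproduced.

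The paper does not analyze $U$ via the composite $U\hookrightarrow M(\gamma)\xrightarrow{g} M(\beta)$ at all. Instead it uses the Euler form: since $U$ is locally free with $\rkv(U)=\beta$ and $M(\beta)$ is locally free with the same rank vector, Proposition~\ref{prop-dimrank}(iv) gives
$\bil{\beta,\beta}_H = \dim\Hom_H(M(\beta),U) - \dim\Ext^1_H(M(\beta),U)$,
and positive-definiteness in Dynkin type forces $\bil{\beta,\beta}_H>0$, hence $\Hom_H(M(\beta),U)\ne 0$. Composing with the inclusion $U\hookrightarrow M(\gamma)$ yields a nonzero map $M(\beta)\to M(\gamma)$, which together with the surjection $g\colon M(\gamma)\to M(\beta)$ and the fact that $M(\beta)\not\cong M(\gamma)$ (different rank vectors) gives a strict $2$-cycle $M(\beta)\to M(\gamma)\to M(\beta)$, contradicting Corollary~\ref{prop-preproj4}. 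That Euler-form step is exactly what your argument is missing: your ``unwinding'' from $U\cap\Ima(f)\neq 0$ to a nonzero map $M(\alpha)\to M(\beta)$ or $M(\beta)\to M(\alpha)$ is not justified, and I do not see how to make it work. Moreover, even if you had a nonzero $M(\alpha)\to M(\beta)$, it would not close a cycle directly (note $g\circ f=0$ by exactness, so you cannot use $M(\alpha)\to M(\gamma)\to M(\beta)$), and a nonzero $M(\alpha)\to M(\beta)$ is in any case not a contradiction here since (c1) is not assumed in this lemma. Your claim that $\Ima(g|_U)$ is locally free is also unsubstantiated, since $U\cap\Ima(f)$ need not be locally free. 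Finally, the ``main obstacle'' paragraph is internally inconsistent: you assert ``$M(\beta)\to M(\gamma)$ is impossible'' and then in the same sentence claim to construct a nonzero map $M(\beta)\to(\text{summand of }U)\to M(\gamma)$. In fact the existence of such a map is what one must prove (via the Euler form), and that existence \emph{combined} with $g\ne 0$ is the contradiction. So the decomposition of $U$ into preprojective summands and the whole case analysis are unnecessary detours; the one thing really needed---the Euler-form lower bound $\bil{\beta,\beta}_H>0$---is absent from your argument.
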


\begin{proof}
Assume $U$ is a submodule of $M(\gamma)$ with
rank vector $\beta$.
We have 
$$
\bil{\beta,\beta}_H = \dim \Hom_H(M(\beta),U) -
\dim \Ext_H^1(M(\beta),U) > 0.
$$
Thus we get $\Hom_H(M(\beta),U) \not= 0$.
But this implies $\Hom_H(M(\beta),M(\gamma)) \not= 0$,
a contradiction, since there are no strict cycles consisting of
preprojective $H$-modules, see Proposition~\ref{prop-preproj4}.
Part (ii) is proved dually.
\end{proof}

\begin{Lem}\label{strategy2}
Assume that $(\alpha,\beta,\gamma)$ is a triple of positive roots
satisfying {\rm (c1)} and {\rm (c2)}.
Then the following hold:
\begin{itemize}

\item[(i)]
$M(\gamma)$ has only one submodule isomorphic to $M(\alpha)$,
namely $\Ima(f)$;

\item[(ii)]
For any locally free submodule 
$U$ of $M(\gamma)$ with
$\rkv(U) = \alpha$ and $U \not\cong M(\alpha)$ we have
$$
\Hom_H(U,M(\beta)) \not= 0;
$$

\item[(iii)]
$M(\gamma)$ has only one factor module isomorphic to
$M(\beta)$, namely $M(\gamma)/\Ima(f)$;

\item[(iv)]
For any locally free factor module $V$ of $M(\gamma)$ with
$\rkv(V) = \beta$ and $V \not\cong M(\beta)$ we have 
$$
\Hom_H(M(\alpha),V) \not= 0.
$$

\end{itemize}
\end{Lem}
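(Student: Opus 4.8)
The plan is to derive (i) and (ii) from the surjection $g$ in (c2), and (iii) and (iv) from the injection $f$ by the dual argument. Throughout, I will use that $M(\alpha)$, $M(\beta)$ and $M(\gamma)$ are preprojective, hence locally free (Section~\ref{sec-preproj}), so that any locally free module with rank vector $\alpha$ (resp.\ $\beta$) has the same dimension vector, hence the same $\C$-dimension, as $M(\alpha)$ (resp.\ $M(\beta)$). This is the bookkeeping that will let me upgrade inclusions of equal dimension to equalities and surjections between modules of equal dimension to isomorphisms.

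For (i) and (ii), let $U$ be a submodule of $M(\gamma)$ and consider the restriction $g|_U\df U \to M(\beta)$. If $U \cong M(\alpha)$, then $g|_U$ lies in $\Hom_H(U,M(\beta)) \cong \Hom_H(M(\alpha),M(\beta))$, which vanishes by (c1); hence $U \subseteq \Ker(g) = \Ima(f)$, and since $U \subseteq \Ima(f)$ with $\dim_\C U = \dim_\C M(\alpha) = \dim_\C \Ima(f)$ we conclude $U = \Ima(f)$, proving (i). For (ii), assume only that $U$ is locally free with $\rkv(U) = \alpha$ and $U \not\cong M(\alpha)$; if $g|_U = 0$ then the same computation gives $U \subseteq \Ima(f)$, and now $U$ and $\Ima(f)$ are locally free of the same rank vector, hence of the same dimension vector, so $U = \Ima(f) \cong M(\alpha)$, a contradiction. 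Therefore $g|_U \neq 0$, i.e.\ $\Hom_H(U,M(\beta)) \neq 0$.

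Parts (iii) and (iv) are proved dually. Let $\pi\df M(\gamma) \to V$ be a surjection and consider $\pi f\df M(\alpha) \to V$. If $V \cong M(\beta)$, then $\pi f \in \Hom_H(M(\alpha),V) \cong \Hom_H(M(\alpha),M(\beta)) = 0$ by (c1), so $\Ima(f) \subseteq \Ker(\pi)$ and the induced surjection $M(\gamma)/\Ima(f) \to V$ is a map between two modules isomorphic to $M(\beta)$ (via $g$ and via the hypothesis, respectively), hence an isomorphism by finite-dimensionality; thus $\Ker(\pi) = \Ima(f)$ and $V = M(\gamma)/\Ima(f)$, which is (iii). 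For (iv), if $V$ is locally free with $\rkv(V) = \beta$, $V \not\cong M(\beta)$ and $\pi f = 0$, the same factorization yields a surjection $M(\beta) \cong M(\gamma)/\Ima(f) \twoheadrightarrow V$ between modules of equal dimension vector, hence an isomorphism, contradicting $V \not\cong M(\beta)$; so $\pi f \neq 0$, i.e.\ $\Hom_H(M(\alpha),V) \neq 0$.

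The argument is elementary and uses only (c1), (c2) and basic homological facts about $H$ (local freeness of preprojective modules, finite-dimensionality); it does not even need Lemma~\ref{strategy1}. The only point requiring care is the passage between rank vectors and dimension vectors, which underpins both the equality $U = \Ima(f)$ and the isomorphism statements, so I do not anticipate a genuine obstacle in this proof.
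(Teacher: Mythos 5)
Your proof is correct and follows essentially the same route as the paper's: restrict $g$ to the submodule $U$ (resp.\ compose $f$ with the projection onto $V$), invoke (c1) to see the composite vanishes when $U\cong M(\alpha)$ (resp.\ $V\cong M(\beta)$), and then compare dimensions via local freeness to promote the resulting inclusion to an equality. The paper's version is more compressed, folding (i) and (ii) into a single dichotomy on whether $g\circ f'$ vanishes, but the underlying argument is identical.
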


\begin{proof}
Let $f'\df U \to M(\gamma)$ be a monomorphism where $\rkv(U) = \alpha$. 
Suppose that $g\circ f' = 0$.
Then for dimension reasons we get $\Ima(f) = \Ima(f')$. 
If $g \circ f' \not= 0$, then $U \not\cong M(\alpha)$ since
$\Hom_H(M(\alpha),M(\beta)) = 0$.
This yields (i) and (ii).
Parts (iii) and (iv) are proved dually.
\end{proof}

\begin{Prop}\label{strategy3}
Let $(\alpha,\beta,\gamma)$ be an $H$-admissible triple.
Let $\rho,\sigma \in \cP(\cM)$ such that
$\rho(M(\alpha)) = 1$ and $\sigma(M(\beta)) = 1$.
Then $[\rho,\sigma] \in \cP(\cM)$ satisfies
$[\rho,\sigma](M(\gamma)) = 1$. 
\end{Prop}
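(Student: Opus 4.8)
The plan is to compute $[\rho,\sigma](M(\gamma))=(\rho*\sigma)(M(\gamma))-(\sigma*\rho)(M(\gamma))$ by evaluating each convolution product straight from the definition of $*$ as an Euler-characteristic integral over the projective variety of submodules of $M(\gamma)$, and to show that $(\rho*\sigma)(M(\gamma))=1$ while $(\sigma*\rho)(M(\gamma))=0$. Before starting I would reduce to the case where $\rho$ and $\sigma$ are homogeneous, of degrees $\alpha$ and $\beta$ respectively (the case in which the proposition is used), and record the following: since $\rho,\sigma\in\cM$, by Lemma~\ref{lem-support} they vanish on modules that are not locally free, and being primitive, by Lemma~\ref{lem-indec} they vanish on decomposable modules; hence $\rho$ is supported on indecomposable locally free modules of rank vector $\alpha$ and $\sigma$ on indecomposable locally free modules of rank vector $\beta$. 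Note also $\gamma=\alpha+\beta$, by additivity of rank vectors in the exact sequence of (c2).

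I would first dispose of $(\sigma*\rho)(M(\gamma))=\int_{Y\subseteq M(\gamma)}\sigma(Y)\,\rho(M(\gamma)/Y)\,d\chi$. The integrand is nonzero only where $\sigma(Y)\neq 0$, which forces $Y$ to be a locally free submodule of $M(\gamma)$ of rank vector $\beta$; by Lemma~\ref{strategy1}(i) no such submodule exists, so the integrand is identically zero and $(\sigma*\rho)(M(\gamma))=0$.

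The main step is to show $(\rho*\sigma)(M(\gamma))=\int_{Y\subseteq M(\gamma)}\rho(Y)\,\sigma(M(\gamma)/Y)\,d\chi=1$. Here the integrand is nonzero only at submodules $Y$ with $\rho(Y)\neq 0$ and $\sigma(M(\gamma)/Y)\neq 0$, that is, at $Y$ indecomposable locally free of rank vector $\alpha$ whose quotient $M(\gamma)/Y$ is indecomposable of rank vector $\beta$ (local freeness of the quotient is automatic since $\repvp(H)$ is closed under cokernels of monomorphisms, and its rank vector is forced by additivity). Now I would use condition (c3) and split into two cases. If every indecomposable locally free submodule of $M(\gamma)$ of rank vector $\alpha$ is isomorphic to $M(\alpha)$, then every contributing $Y$ is $\cong M(\alpha)$, and by Lemma~\ref{strategy2}(i) the only such submodule is $\Ima(f)$; since $M(\gamma)/\Ima(f)\cong M(\beta)$ by (c2), the integrand is the characteristic function of the single point $\{\Ima(f)\}$, with value $\rho(M(\alpha))\sigma(M(\beta))=1$. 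In the other case, every indecomposable locally free factor module of $M(\gamma)$ of rank vector $\beta$ is isomorphic to $M(\beta)$, so every contributing $Y$ has $M(\gamma)/Y\cong M(\beta)$; by Lemma~\ref{strategy2}(iii) the only such $Y$ is $\Ima(f)$, and again the integrand is the characteristic function of $\{\Ima(f)\}$ with value $1$. In both cases the integral is $\chi(\{\Ima(f)\})=1$, so $[\rho,\sigma](M(\gamma))=1-0=1$.

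I expect the main obstacle to be the bookkeeping in this last step: one must ensure that the set of submodules $Y$ contributing to $(\rho*\sigma)(M(\gamma))$ collapses to the single reduced point $\{\Ima(f)\}$ — in particular that there is no positive-dimensional family of such $Y$ — so that its Euler characteristic is exactly $1$. This is precisely what the two alternatives of condition (c3), combined with the uniqueness assertions of Lemma~\ref{strategy2}, are designed to deliver; the case split is genuinely needed, since in general $M(\gamma)$ may have many locally free submodules of rank vector $\alpha$ (or many factor modules of rank vector $\beta$), just not both.
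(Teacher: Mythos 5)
Your proof is correct and follows essentially the same route as the paper's: expand $[\rho,\sigma]$, kill $(\sigma*\rho)(M(\gamma))$ via Lemma~\ref{strategy1}(i), and show $(\rho*\sigma)(M(\gamma))=1$ by splitting on the two alternatives of (c3) and invoking the uniqueness statements of Lemma~\ref{strategy2}(i) and (iii) to reduce the integral to a single point. Your explicit remark that $\rho,\sigma$ should be taken homogeneous of degrees $\alpha,\beta$ is a sensible clarification of an implicit hypothesis, and your closing observation about why the disjunction in (c3) is genuinely needed (submodules and quotients cannot both be controlled in general) accurately identifies the design of the definition.
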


\begin{proof}
We have
$[\rho,\sigma] = \rho * \sigma
-\sigma * \rho$.
Now $M(\gamma)$ has exactly one indecomposable submodule $U$ with rank vector $\alpha$ and we have $U \cong M(\alpha)$ and $M(\gamma)/U \cong M(\beta)$, or 
$M(\gamma)$ has exactly one indecomposable factor module $V = M(\gamma)/U$ with rank vector $\beta$ and we have $V \cong M(\beta)$ and 
$U \cong M(\alpha)$.
In both cases, we get 
$(\rho * \sigma)(M(\gamma)) = 1$.
Furthermore, $M(\gamma)$ has no submodule with rank vector $\beta$.
Thus we have 
$(\sigma * \rho)(M(\gamma)) = 0$.
This finishes the proof.
\end{proof}

\subsection{Convention for displaying modules}\label{sec-convention}
Let $H = H(C,D,\Omega)$ with $C$ of Dynkin type.
Thus the quiver $Q = Q(C,\Omega)$ has no multiple arrows.
In the following sections we will construct explicitly some
$H$-modules $M$ by giving a $\C$-basis $B_i$ of $M_i$ for
each vertex $i$ of $Q$
together with the action of the arrows of $H$ on the basis 
$B := B_1 \cup \cdots \cup B_n$ of $M$.
This information is displayed in a diagram.
The vertices of the diagram with label $i$ correspond to the elements in $B_i$, and the edges in the diagram show how the arrows of $Q$ act on $B$.
For example,
let
$$
C = \left(\begin{matrix} 
2&-1&&&\\-1&2&-1&&\\&-1&2&-1&\\&&-1&2&-1\\&&&-2&2\end{matrix}\right)
$$
be a Cartan matrix of type $B_5$.
The minimal symmetrizer of $C$ is $D = \diag(2,2,2,2,1)$.
Let $\Omega$ be an orientation of $C$ with
$\{ (2,3),(4,5) \} \subset \Omega$.
The diagram
$$
\xymatrix@-1.8ex{
2 \ar[d]& 3 \ar[d]\ar[l]\ar@{-}[r] & 4\ar[d] & 5 \ar[l] 
\\
2 & 3 \ar[l]\ar@{-}[r] & 4 & 5 \ar[l]\ar[dl]
\\
& 3 \ar[d] & 4 \ar[d]\ar@{-}[l]
\\
& 3 \ar@{-}[r] & 4
}
$$
defines a locally free $H$-module $M$ with $\rkv(M) = (0,1,2,2,2)$.
Each vertex $i$ of the diagram stands for a basis vector of 
$M_i$.
An oriented edge like $\xymatrix{2 & 3 \ar[l]}$ means that
$\alpha_{23}$ sends the basis vector labeled by $3$ to the one labeled by $2$.
A non-oriented edge like $\xymatrix{3 \ar@{-}[r] & 4}$ means that
$\alpha_{34}$ sends the basis vector labeled by $4$ to the one labeled by $3$ provided $(3,4) \in \Omega$, or that $\alpha_{43}$ sends
the basis vector labeled by $3$ to the one labeled by $4$ in case $(4,3) \in \Omega$.
The two arrows starting in the lower $5$ mean that $\alpha_{45}$
sends the basis vector labeled by the lower $5$ to the sum of the
basis vectors labeled by the two $4$'s in the middle.

\subsection{How do we know that we constructed the correct module?}
Let $C$ and $D$ be as in the example in Section~\ref{sec-convention},
and set $\Omega = \{ (1,2),(2,3),(3,4), (4,5) \}$.
Let $H = H(C,D,\Omega)$.
Let $M$ be the $H$-module defined by the diagram
$$
\xymatrix@-1.8ex{
2 \ar[d]& 3 \ar[d]\ar[l] & 4\ar[l]\ar[d] & 5 \ar[l] 
\\
2 & 3 \ar[l] & 4 \ar[l]& 5 \ar[l]\ar[dl]
\\
& 3 \ar[d] & 4 \ar[l]\ar[d]
\\
& 3 & 4 \ar[l]
}
$$
Clearly, $M$ is a locally free $H$-module with rank vector $\br := (0,1,2,2,2)$.
(Just check that all defining relations for $H$ are satisfied and that 
the $H_i$-module $M_i$ is free for all $i$.
This is straightforward.)
Furthermore, we claim that $M$ is isomorphic to the indecomposable preprojective $H$-module $M(\gamma)$ with
$\gamma = \br$.
To prove this, we need to show that $M$ is rigid, compare \cite[Theorem~1.2]{GLS1}.
Let $\bd = \dimv(M) = (0,2,4,4,2)$.
Recall that 
$q_H(\br) = \dim \End_H(M) - \dim \Ext_H^1(M,M)$.
Thus $M$ is rigid if and only if
$\dim \End_H(M) = q_H(\br)$.
The proof of this equality is
done by an explicit calculation which we now carry out for the above example.
In all other examples below, this is done similarly and is left to the reader.

We have $M = (M_i,M(\alpha_{ij}),M(\vep_i))$ with
$(i,j) \in \Omega$ and $1 \le i \le 5$.
We have $M_1 = 0$, $M_2 = K^2$, $M_3 = K^4$, $M_4 = K^4$ and $M_5 = K^2$. 
Numbering the basis vectors in each column of the above diagram
from bottom to top we get 
\begin{align*}
M(\vep_1) &= 0, & 
M(\vep_2) &= 
\bbm 0&1\\0&0\ebm, &
M(\vep_3) &= M(\vep_4) =
\bbm 0&1&0&0\\0&0&0&0\\0&0&0&1\\0&0&0&0\ebm, &
M(\alpha_{12}) &= 0,
\end{align*}
\begin{align*}
M(\alpha_{23}) &= 
\bbm 0&0&1&0\\0&0&0&1\ebm, &
M(\alpha_{34}) &= 
\bbm 1&0&0&0\\0&1&0&0\\0&0&1&0\\0&0&0&1\ebm, &
M(\alpha_{45}) &= 
\bbm 0&0\\1&0\\1&0\\0&1\ebm.
\end{align*}
An endomorphism of $M$ is given by a tuple $F = (F_1,F_2,F_3,F_4,F_5)$
with $F_1 = 0$, $F_2,F_5 \in M_2(K)$ and $F_3,F_4 \in M_4(K)$ such that the
following relations hold:
\begin{align}
F_iM(\vep_i) &= M(\vep_i)F_i\text{\;\;\;\;\;\;\;\;\; for } i = 2,3,4, \label{eq-5.1}
\\
F_2M(\alpha_{23}) &= M(\alpha_{23})F_3,\label{eq-5.2}
\\
F_3M(\alpha_{34}) &= M(\alpha_{34})F_4,\label{eq-5.3}
\\
F_4M(\alpha_{45}) &= M(\alpha_{45})F_5.\label{eq-5.4}
\end{align}
Equation~(\ref{eq-5.3}) implies that $F_3 = F_4$.
From (\ref{eq-5.1}) we get that $F_2,\ldots,F_5$ are of the form
\begin{align*}
F_2 = \bbm a_2&b_2\\0&a_2\ebm,&&
F_3 = F_4 = \bbm a_3&b_3&c_3&d_3\\0&a_3&0&c_3\\e_3&f_3&g_3&h_3\\
0&e_3&0&g_3\ebm,&&
F_5 = \bbm a_5&b_5\\c_5&d_5\ebm.
\end{align*}
with $a_i,b_i,c_j,d_j,e_3,f_3,g_3,h_3 \in K$ for $i= 2,3,5$ and $j=3,5$.
Now (\ref{eq-5.2}) yields $e_3=f_3=0$, $a_2=g_3$ and
$b_2=h_3$.
Equation~(\ref{eq-5.4}) implies that 
$c_3=b_2=-b_3=b_5$, $d_3=c_5=0$ and $a_2=a_3=a_5=d_5$.
Combining all equations we get
\begin{align*}
F_2 = \bbm a_2&b_2\\0&a_2\ebm,&&
F_3 = F_4 = \bbm a_2&-b_2&b_2&0\\0&a_2&0&b_2\\0&0&a_2&b_2\\
0&0&0&a_2\ebm,&&
F_5 = \bbm a_2&b_2\\0&a_2\ebm.
\end{align*}
This shows that $\dim \End_H(M) = 2 = q_H(\br)$.
Thus $M$ is rigid, and therefore $M \cong M(\gamma)$.

\subsection{Filtrations}
Let $\Phi := (\beta_1,\ldots,\beta_t)$ be a sequence of positive roots.
We say that an $H$-module $M$ has a \emph{filtration of type}
$\Phi$ if there is a chain 
$$
0 = M_0 \subset M_1 \subset \cdots \subset M_{t-1} \subset M_t = M
$$
of submodules $M_i$ of $M$ such that $M_i/M_{i-1}$ is locally free
with
$\rkv(M_i/M_{i-1}) = \beta_i$ for all $1 \le i \le t$.

Suppose that $\theta_{\beta_i} \in \cM_{\beta_i}$ for $1 \le i \le t$.
If $\theta_{\beta_1} \cdots \theta_{\beta_t}(M) \not= 0$ for some $H$-module $M$, then
$M$ needs to have a filtration of type $(\beta_1,\ldots,\beta_t)$.
This obvious fact will be used at various places in the following sections.

\subsection{Type $B_n$}\label{primitive-B_n}
Let 
$$
C = \left(
\begin{matrix}
2&-1&&&\\-1&2&\ddots&&\\&\ddots&2&-1&\\&&-1&2&-1\\&&&-2&2
\end{matrix}
\right)
$$
be a Cartan matrix of type $B_n$.
The minimal symmetrizer of $C$ is $D = \diag(2,\ldots,2,1)$.
Let $\Omega$ be an orientation of $C$ such that
$(n-1,n) \in \Omega$.
(The case $(n,n-1) \in \Omega$ is treated dually.)
Set $H = H(C,D,\Omega)$.
We construct now explicitly for each $\gamma \in \Delta^+$
the indecomposable
preprojective $H$-module $M(\gamma)$
with rank vector $\gamma$ and the corresponding primitive element $\theta_\gamma$.

\subsubsection{}
For $1 \le k \le s \le n-1$ and $\gamma = \sum_{i=k}^s \alpha_i$, 
we have
$$
M(\gamma)\df
\xymatrix{
k \ar@{-}[r]\ar[d] & k+1 \ar[d]\ar@{-}[r] & \cdots \ar@{-}[r] & s \ar[d] 
\\
k \ar@{-}[r]&k+1 \ar@{-}[r] & \cdots \ar@{-}[r] & s
}
$$
If $k=s$, set $\theta_\gamma  := \theta_k$.
Thus let $k < s$.
For $(k+1,k) \in \Omega$, set
$\alpha = \sum_{i=k+1}^s \alpha_i$ and $\beta = \gamma-\alpha
= \alpha_k$,
and for $(k,k+1) \in \Omega$, let
$\alpha = \alpha_k$ and $\beta = \gamma-\alpha = \sum_{i=k+1}^s \alpha_i$.
In both cases it is now easy to check that $(\alpha,\beta,\gamma)$ is
an $H$-admissible triple.
Set $\theta_\gamma := [\theta_\alpha,\theta_\beta]$.

\subsubsection{}
For $1 \le k \le n-1$ and $\gamma = \sum_{i=k}^n\alpha_i$,
we have
$$
M(\gamma)\df
\xymatrix{
k \ar@{-}[r]\ar[d] & \cdots \ar@{-}[r] & n-1 \ar[d] & n \ar[l]
\\
k \ar@{-}[r]&  \cdots \ar@{-}[r] & n-1
}
$$
Take $\alpha = \sum_{i=k}^{n-1} \alpha_i$ and $\beta = \gamma-\alpha = \alpha_n$.
It is easy to check that $(\alpha,\beta,\gamma)$ is
an $H$-admissible triple.
Define
$\theta_\gamma := [\theta_\alpha,\theta_\beta]$.

\subsubsection{}
For $1 \le k \le n-1$ and $\gamma = (\sum_{i=k}^{n-1} \alpha_i) + 2\alpha_n$, we have 
$$
M(\gamma)\df
\xymatrix{
k \ar@{-}[r]\ar[d] & \cdots \ar@{-}[r] & n-1 \ar[d] & n \ar[l]
\\
k \ar@{-}[r]& \cdots \ar@{-}[r] & n-1 & n \ar[l]
}
$$
Take $\alpha = \sum_{i=k}^{n-1} \alpha_i$ and $\beta = \alpha_n$, and define
$\theta_\gamma := 1/2[[\theta_\alpha,\theta_\beta],\theta_\beta]$.
It is straightforward to check that $M(\gamma)$ does not have any
filtrations of type $(\beta,\alpha,\beta)$ or $(\beta,\beta,\alpha)$.
We get $\theta_\gamma(M(\gamma)) = 1$.

\subsubsection{}
For $1 \le k \le s \le n-2$ and
$\gamma = \sum_{i=k}^s \alpha_i + \sum_{j=s+1}^n 2\alpha_j$,
we have
$$
M(\gamma)\df
\xymatrix{
k \ar[d] \ar@{-}[r] & \cdots \ar@{-}[r] & s \ar[d] & s+1 \ar[l]\ar[d]\ar@{-}[r]
& \cdots \ar@{-}[r] & n-1 \ar[d] & n \ar[l]
\\
k \ar@{-}[r] & \cdots \ar@{-}[r] & s & s+1 \ar[l]\ar@{-}[r] & \cdots \ar@{-}[r]
& n-1 & n \ar[l]\ar[dl] 
\\
&&&s+1 \ar[d]\ar@{-}[r]
& \cdots \ar@{-}[r] & n-1 \ar[d]
\\
&&&s+1 \ar@{-}[r]
& \cdots \ar@{-}[r] & n-1
}
$$
if $(s,s+1) \in \Omega$, and
$$
M(\gamma)\df
\xymatrix{
&&& s+1 \ar[d]\ar@{-}[r]
& \cdots \ar@{-}[r] & n-1 \ar[d] & n \ar[l]
\\
&&&s+1 \ar@{-}[r] & \cdots \ar@{-}[r]
& n-1 & n\ar[l]\ar[dl]
\\
k \ar[d] \ar@{-}[r] & \cdots \ar@{-}[r] & s \ar[d]\ar[r] & s+1 \ar[d]\ar@{-}[r]
& \cdots \ar@{-}[r] & n-1 \ar[d]
\\
k \ar@{-}[r] & \cdots \ar@{-}[r] & s \ar[r] & s+1 \ar@{-}[r] & \cdots \ar@{-}[r]
& n-1
}
$$
if $(s+1,s) \in \Omega$.
Let $t \ge s+1$ be maximal such that there is a path
$p$ in $Q$ with $s(p) = s+1$ and $t(p) = t$.
For $(s,s+1) \in \Omega$, 
set $\alpha = \sum_{i=s+1}^t \alpha_i$ and $\beta = \gamma-\alpha$,
and for $(s+1,s) \in \Omega$, 
set $\alpha = \sum_{i=k}^t \alpha_i$ and $\beta = \gamma-\alpha$.
In both cases it is now easy to check that $(\alpha,\beta,\gamma)$ is
an $H$-admissible triple.
Set $\theta_\gamma := [\theta_\alpha,\theta_\beta]$.

\subsection{Type $C_n$}\label{primitive-C_n}
Let 
$$
C = \left(
\begin{matrix}
2&-1&&&\\-2&2&-1&&\\&-1&2&\ddots&\\&&\ddots&2&-1\\&&&-1&2
\end{matrix}
\right)
$$
be a Cartan matrix of type $C_n$.
The minimal symmetrizer of $C$ is $D = \diag(2,1,\ldots,1)$.
Let $\Omega$ be an orientation of $C$.
Let $H = H(C,D,\Omega)$.
We construct now explicitly for each $\gamma \in \Delta^+$
the indecomposable
preprojective $H$-module $M(\gamma)$
with rank vector $\gamma$ and the corresponding primitive element $\theta_\gamma$.

\subsubsection{}
For $2 \le k \le s \le n$ and $\gamma = \sum_{i=k}^s \alpha_i$,
we have
$$
M(\gamma)\df
\xymatrix{
k \ar@{-}[r]
& k+1  \ar@{-}[r] & \cdots \ar@{-}[r] & s 
}
$$
For $(k+1,k) \in \Omega$, let
$\alpha = \sum_{i=k+1}^s \alpha_i$ and
$\beta = \gamma-\alpha = \alpha_k$, and for
$(k,k+1) \in \Omega$, let
$\alpha = \alpha_k$ and
$\beta = \gamma-\alpha = \sum_{i=k+1}^s \alpha_i$.
In both cases it is easy to check that $(\alpha,\beta,\gamma)$ is
an $H$-admissible triple.
Set
$\theta_\gamma := [\theta_\alpha,\theta_\beta]$.

\subsubsection{}
For $1 \le k \le n$ and $\gamma = \alpha_1 + \sum_{i=2}^k 2\alpha_i$,
we have
$$
M(\gamma)\df
\xymatrix{
1 \ar[d] \ar@{-}[r]& 2 \ar@{-}[r] & 3 \ar@{-}[r] &\cdots \ar@{-}[r] & k
\\
1 \ar@{-}[r]& 2 \ar@{-}[r] & 3 \ar@{-}[r] &\cdots \ar@{-}[r] & k
}
$$
For $k=1$ let $\theta_\gamma := \theta_1$.
Assume that $k > 1$.
If $(k-1,k) \in \Omega$, then
set $\alpha = \alpha_1 + \sum_{i=2}^{k-1} 2\alpha_i$
and $\beta = \alpha_k$, and let
$\theta_\gamma := 1/2[[\theta_\alpha,\theta_\beta],\theta_\beta]$.
It is straightforward to check that $M(\gamma)$ does not have any
filtrations of type $(\beta,\alpha,\beta)$ or $(\beta,\beta,\alpha)$.
We get $\theta_\gamma(M(\gamma)) = 1$.
If $(k,k-1) \in \Omega$, then set
$\alpha = \alpha_k$ and  $\beta = \alpha_1 + \sum_{i=2}^{k-1} 2\alpha_i$, and let
$\theta_\gamma := 1/2[\theta_\alpha,[\theta_\alpha,\theta_\beta]]$.
It is straightforward to check that $M(\gamma)$ does not have any
filtrations of type $(\alpha,\beta,\alpha)$ or $(\beta,\alpha,\alpha)$.
We get $\theta_\gamma(M(\gamma)) = 1$.

\subsubsection{}
For $1 \le k < s  \le n$ and $\gamma = \alpha_1 + \sum_{i=2}^k 2\alpha_i +
\sum_{j=k+1}^s \alpha_j$, we have
$$
M(\gamma)\df
\xymatrix{
1 \ar[d] \ar@{-}[r]& 2 \ar@{-}[r] & 3 \ar@{-}[r] &\cdots \ar@{-}[r] & k
\\
1 \ar@{-}[r]& 2 \ar@{-}[r] & 3 \ar@{-}[r] &\cdots \ar@{-}[r] & k \ar[r]
& k+1  \ar@{-}[r] & \cdots \ar@{-}[r] & s 
}
$$
if $(k+1,k) \in \Omega$,
and
$$
M(\gamma)\df
\xymatrix{
1 \ar[d] \ar@{-}[r]& 2 \ar@{-}[r] & 3 \ar@{-}[r] &\cdots \ar@{-}[r] & k
& k+1 \ar[l]\ar@{-}[r] & \cdots \ar@{-}[r] & s
\\
1 \ar@{-}[r]& 2 \ar@{-}[r] & 3 \ar@{-}[r] &\cdots \ar@{-}[r] & k
}
$$
if $(k,k+1) \in \Omega$.
For $(k+1,k) \in \Omega$,
let $\alpha = \sum_{i=k+1}^s \alpha_i$ and
$\beta = \gamma-\alpha = \alpha_1+\sum_{i=2}^k2\alpha_i$, and for
$(k,k+1) \in \Omega$, set 
$\alpha = \alpha_1 + \sum_{i=2}^k 2\alpha_i$ and 
$\beta = \gamma-\alpha = \sum_{j=k+1}^s \alpha_j$.
In both cases it is easy to check that $(\alpha,\beta,\gamma)$ is
an $H$-admissible triple.
Let
$\theta_\gamma := [\theta_\alpha,\theta_\beta]$.

\subsection{Type $F_4$}\label{primitive-F_4}
Let 
$$
C = \left(
\begin{matrix}
2&-1&&\\
-1&2&-1&\\
&-2&2&-1\\
&&-1&2
\end{matrix}
\right)
$$
be a Cartan matrix of type $F_4$.
The minimal symmetrizer of $C$ is $D = \diag(2,2,1,1)$.
Recall that a root $\gamma \in \Delta^+$ is \emph{sincere} provided
$\gamma$ is of the form $\gamma = \sum_{i=1}^4 m_i\alpha_i$
with $m_i \ge 1$ for all $i$.
For the non-sincere roots $\gamma \in \Delta^+$, we assume that $\theta_\gamma$ is already constructed (using the Dynkin types $B_3$ and
$C_3$).
There are 8 orientations for type $F_4$.
In the following Figures~\ref{F4C1}, \ref{F4C2}, \ref{F4C3}, \ref{F4C4} 
we display the Auslander-Reiten quivers
of the hereditary tensor algebras $T(C,D,\Omega)$ of type $F_4$ with 4 of these 8 orientations $\Omega$.
The other 4 orientations are treated dually.
The subgraphs of the form
$$
\xymatrix@-0.3cm{
& \gamma_1 \ar[dr]\\
\alpha \ar[ur]^2\ar[dr] && \beta \\
& \gamma_2 \ar[ur]
}
$$
stand for Auslander-Reiten sequences 
$$
0 \to X(\alpha) \to X(\gamma_1)^2 \oplus X(\gamma_2) \to X(\beta) \to 0.
$$
For each orientation we have 10 sincere roots $\gamma \in \Delta^+$.
Each of these will be treated separately.
So in total we need to consider 40 different cases. 
In each case, we construct for a given $\gamma$ an
$H$-admissible triple $(\alpha,\beta,\gamma)$.
For all pairs $(\alpha,\beta)$ appearing in our constructions,
one uses the techniques explained in Section~\ref{sec-preproj}
to check that $\Hom_H(M(\alpha),M(\beta)) = 0$ and
$\Hom_H(M(\beta),M(\alpha)) = 0$.
This is left to the reader.

\begin{landscape}

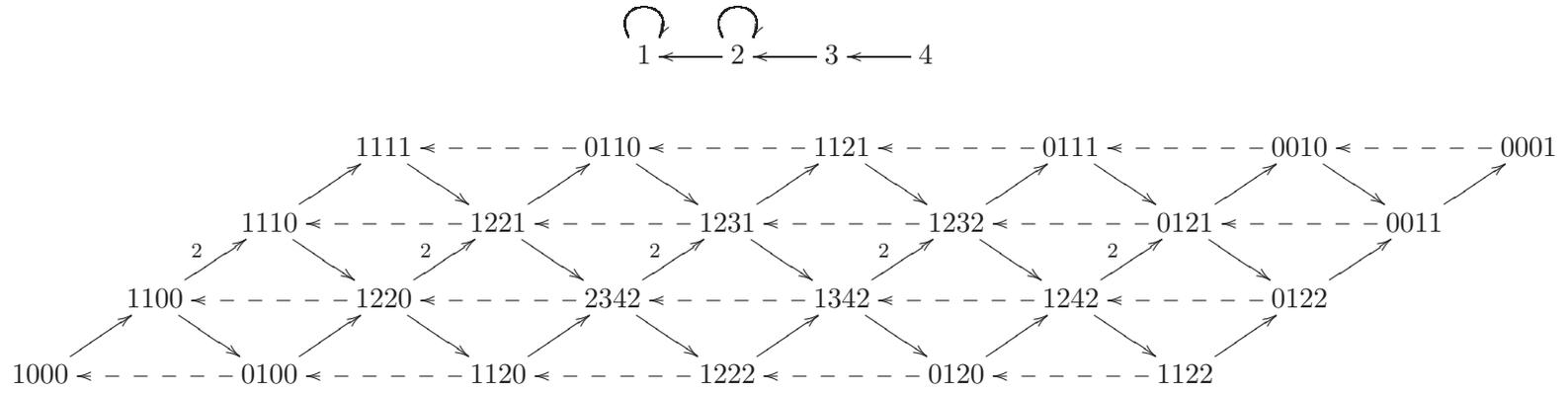
\begin{figure}
$$
\xymatrix{
1 \ar@(ul,ur)^{}& \ar[l] 2\ar@(ul,ur)^{} & \ar[l] 3 & \ar[l] 4 
}
$$

$$
\xymatrix@-0.3cm{
&&&  1111\ar[dr] && 0110\ar[dr]\ar@{-->}[ll] && 
1121\ar[dr]\ar@{-->}[ll] && 0111\ar[dr]\ar@{-->}[ll] && 
0010\ar[dr]\ar@{-->}[ll] && 0001\ar@{-->}[ll]
\\
&&  1110\ar[dr] \ar[ur] && 1221\ar[dr] \ar[ur]\ar@{-->}[ll] && 
1231\ar[dr] \ar[ur]\ar@{-->}[ll] && 1232\ar[dr] \ar[ur]\ar@{-->}[ll]
&& 
0121\ar[dr]\ar[ur]\ar@{-->}[ll] && 0011\ar[ur]\ar@{-->}[ll]
\\
& 1100\ar[dr] \ar[ur]^<<<<2 && 1220\ar[dr] \ar[ur]^<<<<2\ar@{-->}[ll] && 
2342\ar[dr] \ar[ur]^<<<<2\ar@{-->}[ll] && 1342\ar[dr] \ar[ur]^<<<<2\ar@{-->}[ll] 
&& 
1242\ar[dr]\ar[ur]^<<<<2\ar@{-->}[ll] &&  0122\ar[ur]\ar@{-->}[ll]
\\
1000 \ar[ur] && 0100 \ar[ur]\ar@{-->}[ll] && 
1120 \ar[ur]\ar@{-->}[ll] && 1222 \ar[ur]\ar@{-->}[ll] 
&& 
0120\ar[ur]\ar@{-->}[ll] && 1122\ar[ur]\ar@{-->}[ll] 
}
$$
\caption{Type $F_4$, Case 1}
\label{F4C1}
\end{figure}

\begin{figure}
$$
\xymatrix{
1 \ar@(ul,ur)^{}\ar[r]& 2\ar@(ul,ur)^{} & \ar[l] 3 & \ar[l] 4 
}
$$

$$
\xymatrix@-0.3cm{
&&&  0111 \ar[dr] && 1110 \ar[dr]\ar@{-->}[ll] && 
0121 \ar[dr]\ar@{-->}[ll] && 1111 \ar[dr]\ar@{-->}[ll] && 
0010 \ar[dr]\ar@{-->}[ll] && 0001 \ar@{-->}[ll]
\\
&&  0110 \ar[dr] \ar[ur] && 1221 \ar[dr]\ar@{-->}[ll] \ar[ur] && 
1231\ar[dr] \ar[ur]\ar@{-->}[ll] && 1232\ar[dr] \ar[ur]\ar@{-->}[ll]
&& 
1121\ar[dr]\ar[ur]\ar@{-->}[ll] && 0011 \ar[ur]\ar@{-->}[ll]
\\
& 0100 \ar[dr] \ar[ur]^<<<<2  && 1220\ar[dr] \ar[ur]^<<<<2\ar@{-->}[ll] && 
1342\ar[dr] \ar[ur]^<<<<2\ar@{-->}[ll] && 2342\ar[dr] \ar[ur]^<<<<2\ar@{-->}[ll] 
&& 
1242\ar[dr]\ar[ur]^<<<<2\ar@{-->}[ll] && 1122 \ar[ur]\ar[dr]\ar@{-->}[ll]
\\
&& 1100 \ar[ur] && 0120 \ar[ur]\ar@{-->}[ll] && 
1222 \ar[ur]\ar@{-->}[ll] && 1120 \ar[ur]\ar@{-->}[ll] 
&& 
0122\ar[ur]\ar@{-->}[ll] &&1000\ar@{-->}[ll]
}
$$
\caption{Type $F_4$, Case 2}
\label{F4C2}
\end{figure}

\end{landscape}

\begin{landscape}

\begin{figure}
$$
\xymatrix{
1\ar@(ul,ur)^{} & \ar[l] 2\ar@(ul,ur)^{} \ar[r] & 3 & 4 \ar[l]
}
$$

$$
\xymatrix@-0.3cm{
& 0011\ar[dr] && 1110\ar[dr]\ar@{-->}[ll] && 
0121\ar[dr]\ar@{-->}[ll] && 1111\ar[dr]\ar@{-->}[ll] && 
0110\ar[dr]\ar@{-->}[ll] && 0001\ar@{-->}[ll]
\\
0010\ar[dr] \ar[ur] && 1121\ar[dr] \ar[ur]\ar@{-->}[ll] && 
1231\ar[dr] \ar[ur]\ar@{-->}[ll] && 1232\ar[dr] \ar[ur]\ar@{-->}[ll]
&& 
1221\ar[dr]\ar[ur]\ar@{-->}[ll] && 0111\ar[ur]\ar[dr]\ar@{-->}[ll]
\\
& 1120\ar[dr] \ar[ur]^<<<<2  && 1242\ar[dr] \ar[ur]^<<<<2\ar@{-->}[ll] && 
2342\ar[dr] \ar[ur]^<<<<2\ar@{-->}[ll] && 1342\ar[dr] \ar[ur]^<<<<2\ar@{-->}[ll] 
&& 
1222\ar[dr]\ar[ur]^<<<<2\ar@{-->}[ll] && 0100\ar@{-->}[ll]
\\
1000 \ar[ur] && 0120 \ar[ur]\ar@{-->}[ll] && 
1122 \ar[ur]\ar@{-->}[ll] && 1220 \ar[ur]\ar@{-->}[ll] 
&& 
0122\ar[ur]\ar@{-->}[ll] && 1100\ar[ur]\ar@{-->}[ll] 
}
$$
\caption{Type $F_4$, Case 3}
\label{F4C3}
\end{figure}

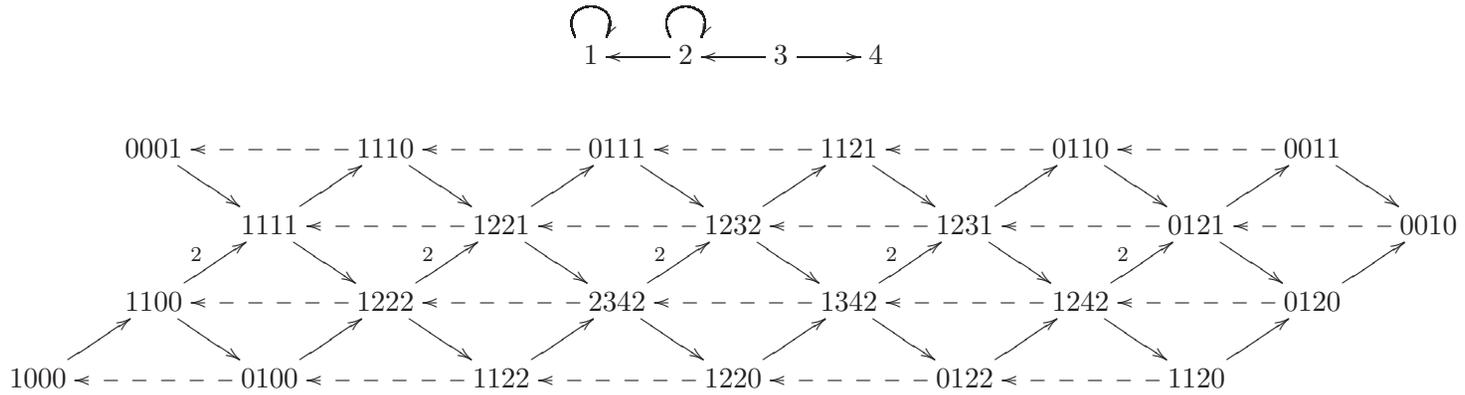
\begin{figure}
$$
\xymatrix{
1\ar@(ul,ur)^{} & \ar[l] 2\ar@(ul,ur)^{} & \ar[l] 3 \ar[r]&  4 
}
$$

$$
\xymatrix@-0.3cm{
& 0001\ar[dr] &&  1110\ar[dr]\ar@{-->}[ll] && 0111\ar[dr]\ar@{-->}[ll] && 
1121\ar[dr]\ar@{-->}[ll] && 0110\ar[dr]\ar@{-->}[ll] && 
0011\ar[dr]\ar@{-->}[ll]
\\
&&  1111\ar[dr] \ar[ur] && 1221\ar[dr] \ar[ur]\ar@{-->}[ll] && 
1232\ar[dr] \ar[ur]\ar@{-->}[ll] && 1231\ar[dr] \ar[ur]\ar@{-->}[ll]
&& 
0121\ar[dr]\ar[ur]\ar@{-->}[ll] && 0010\ar@{-->}[ll]
\\
& 1100\ar[dr] \ar[ur]^<<<<2 && 1222\ar[dr] \ar[ur]^<<<<2\ar@{-->}[ll] && 
2342\ar[dr] \ar[ur]^<<<<2\ar@{-->}[ll] && 1342\ar[dr] \ar[ur]^<<<<2\ar@{-->}[ll] 
&& 
1242\ar[dr]\ar[ur]^<<<<2\ar@{-->}[ll] && 0120\ar[ur]\ar@{-->}[ll]
\\
1000 \ar[ur] && 0100\ar[ur]\ar@{-->}[ll] && 
1122 \ar[ur]\ar@{-->}[ll] && 1220 \ar[ur]\ar@{-->}[ll] 
&& 
0122\ar[ur]\ar@{-->}[ll] && 1120\ar[ur]\ar@{-->}[ll] 
}
$$
\caption{Type $F_4$, Case 4}
\label{F4C4}
\end{figure}

\end{landscape}

If not mentioned otherwise,
in all of the following cases we define $\theta_\gamma := [\theta_\alpha,\theta_\beta]$.

In the following sections,
the 40 cases considered are labeled by $x.1,\ldots,x.10$ with $x = 1,2,3,4$.
The $x$ indicates which of the four orientations $\Omega$ displayed in
Figures~\ref{F4C1},\ref{F4C2},\ref{F4C3},\ref{F4C4} we are working with.
With respect to the diagrams displayed in 
Figures~\ref{F4C1},\ref{F4C2},\ref{F4C3},\ref{F4C4}, 
we go through the cases row-wise. 
Throughout, the logic and results of Section~\ref{adm-triples} will be used freely without reference.


\subsubsection{Case 1.1}
Let $\gamma = (1,1,1,1)$ and
$(\alpha,\beta) = ((1,1,1,0),(0,0,0,1))$.
We have
$$
M(\alpha)\df
\xymatrix@-1.8ex{
1 \ar[d] & 2 \ar[d]\ar[l] & 3 \ar[l] 
\\
1 & 2 \ar[l]
}
\hspace{1cm}
M(\gamma)\df
\xymatrix@-1.8ex{
1 \ar[d] & 2 \ar[d]\ar[l] & 3 \ar[l] & 4 \ar[l]
\\
1 & 2 \ar[l]
}
$$
There is an obvious short exact sequence
$0 \to M(\alpha) \to M(\gamma) \to M(\beta) \to 0$.
(This is not a pseudo AR-sequence.) 
It
is easily checked that
$(\alpha,\beta,\gamma)$ is an $H$-admissible triple.

\subsubsection{Case 1.2}
Let
$\gamma = (1,1,2,1)$ and
$(\alpha,\beta) = ((1,1,2,0),(0,0,0,1))$.
We have
$$
M(\alpha)\df
\xymatrix@-1.8ex{
1 \ar[d] & 2 \ar[d]\ar[l] & 3 \ar[l] 
\\
1 & 2 \ar[l] & 3 \ar[l]
}
\hspace{1cm}
M(\gamma)\df
\xymatrix@-1.8ex{
1 \ar[d] & 2 \ar[d]\ar[l] & 3 \ar[l] & 4 \ar[l]
\\
1 & 2 \ar[l] & 3 \ar[l]
}
$$
There is an obvious short exact sequence
$0 \to M(\alpha) \to M(\gamma) \to M(\beta) \to 0$.
(This is not a pseudo AR-sequence.) 
Obviously, $(\alpha,\beta,\gamma)$ is an $H$-admissible triple.

\subsubsection{Case 1.3}
Let
$\gamma = (1,2,2,1)$ and
$(\alpha,\beta) = ((1,1,1,1),(0,1,1,0))$.
There is a pseudo AR-sequence 
$0 \to M(\alpha) \to M(\gamma) \to M(\beta) \to 0$.
We have
$$
M(\beta)\df 
\xymatrix@-1.8ex{
2 \ar[d] & 3 \ar[l]\\
2 
}
$$
The only other locally free indecomposable $H$-module with rank vector
$(0,1,1,0)$ is
$$
V(\beta)\df 
\xymatrix@-1.8ex{
2 \ar[d] \\
2 & 3 \ar[l]
}
$$
We have $\Hom_H(V(\beta),E_2) \not= 0$.
Using Figure~\ref{F4C1} we get $\Hom_H(M(\gamma),E_2) = 0$.
Thus $V(\beta)$ cannot be a factor module of $M(\gamma)$.
It follows that 
$(\alpha,\beta,\gamma)$ is an $H$-admissible triple.

\subsubsection{Case 1.4}
Let $\gamma = (1,2,3,1)$ and
$(\alpha,\beta) = ((0,1,1,0),(1,1,2,1))$.
There is a pseudo AR-sequence 
$0 \to M(\alpha) \to M(\gamma) \to M(\beta) \to 0$.
We have
$$
M(\alpha)\df
\xymatrix@-1.8ex{
2 \ar[d] & 3 \ar[l] 
\\
2  
}
\hspace{1cm}
M(\beta)\df
\xymatrix@-1.8ex{
1 \ar[d] & 2 \ar[d]\ar[l] & 3 \ar[l] & 4 \ar[l]
\\
1 & 2 \ar[l] & 3 \ar[l] 
}
$$
The only other locally free indecomposable $H$-module with rank vector
$\alpha$ is 
$$
U(\alpha)\df
\xymatrix@-1.8ex{
2 \ar[d] 
\\
2  & 3 \ar[l] 
}
$$
We have $\Hom_H(U(\alpha),M(\beta)) = 0$, thus $U(\alpha)$
cannot be a submodule of $M(\gamma)$.
Thus $(\alpha,\beta,\gamma)$ is an $H$-admissible triple.

\subsubsection{Case 1.5}
Let $\gamma = (1,2,3,2)$ and
$(\alpha,\beta) = ((1,1,2,1),(0,1,1,1))$.
There is a pseudo AR-sequence 
$0 \to M(\alpha) \to M(\gamma) \to M(\beta) \to 0$.
We have
$$
M(\beta)\df
\xymatrix@-1.8ex{
2 \ar[d] & 3 \ar[l] & 4 \ar[l]
\\
2 
}
$$
The only other locally free indecomposable $H$-module with rank vector
$\beta$ is 
$$
V(\beta)\df
\xymatrix@-1.8ex{
2 \ar[d] 
\\
2  & 3 \ar[l] & 4 \ar[l]
}
$$
We have $\Hom_H(V(\beta),E_2) \not= 0$.
Using Figure~\ref{F4C1} we see that
$\Hom_H(M(\gamma),E_2) = 0$.
Thus $V(\beta)$
cannot be a factor module of $M(\gamma)$.
Thus $(\alpha,\beta,\gamma)$ is an $H$-admissible triple.

\subsubsection{Case 1.6}
Let $\gamma = (2,3,4,2)$ and $(\alpha,\beta) = ((1,1,2,0),(1,2,2,2))$.
There is a pseudo AR-sequence 
$0 \to M(\alpha) \to M(\gamma) \to M(\beta) \to 0$.
We have
$$
M(\alpha)\df
\xymatrix@-1.8ex{
1 \ar[d] & 2 \ar[d]\ar[l] & 3 \ar[l] 
\\
1 & 2 \ar[l] & 3 \ar[l] 
}
\hspace{1cm}
M(0,1,1,0)\df
\xymatrix@-1.8ex{
2 \ar[d] & 3 \ar[l] 
\\
2 
}
$$
The only other locally free indecomposable $H$-module with rank vector $\alpha$
is
$$
U(\alpha)\df
\xymatrix@-1.8ex{
1 \ar[d]
\\
1 & 2 \ar[d]\ar[l] & 3 \ar[l] 
\\
& 2  & 3 \ar[l] 
}
$$
We have $\Hom_H(M(0,1,1,0),U(\alpha)) \not= 0$.
On the other hand,
using Figure~\ref{F4C1} we get  
$\Hom_H(M(0,1,1,0),M(\gamma)) = 0$.
Thus $U(\alpha)$ cannot be a submodule of $M(\gamma)$.
Thus $(\alpha,\beta,\gamma)$ is an $H$-admissible triple.

\subsubsection{Case 1.7}
Let
$\gamma = (1,3,4,2)$ and
$(\alpha,\beta) = ((1,2,2,2),(0,1,2,0))$.
There is a pseudo AR-sequence 
$0 \to M(\alpha) \to M(\gamma) \to M(\beta) \to 0$.
The module $M(\beta)$
is the only locally free indecomposable $H$-module with rank vector $\beta$.
Thus $(\alpha,\beta,\gamma)$ is an $H$-admissible triple.

\subsubsection{Case 1.8}
Let
$\gamma = (1,2,4,2)$ and
$(\alpha,\beta) = ((0,1,2,0),(1,1,2,2))$.
There is a pseudo AR-sequence 
$0 \to M(\alpha) \to M(\gamma) \to M(\beta) \to 0$.
The module $M(\alpha)$
is the only locally free indecomposable $H$-module with rank vector $\alpha$.
Thus $(\alpha,\beta,\gamma)$ is an $H$-admissible triple.

\subsubsection{Case 1.9}
Let
$\gamma = (1,2,2,2)$ and
$(\alpha,\beta) = ((0,1,0,0),(1,1,2,2))$.
We have
$$
M(\gamma)\df
\xymatrix@-1.8ex{
1 \ar[d] & 2 \ar[d]\ar[l] & 3 \ar[l] & 4 \ar[l]
\\
1 & 2 \ar[l] & 3 \ar[l]\ar[dl] & 4 \ar[l]
\\
& 2 \ar[d]
\\
&2
}
\hspace{1cm}
M(\beta)\df
\xymatrix@-1.8ex{
1 \ar[d] & 2 \ar[d]\ar[l] & 3 \ar[l] & 4 \ar[l]
\\
1 & 2 \ar[l] & 3 \ar[l] & 4 \ar[l]
}
$$
There obviously exists a short exact sequence
$0 \to M(\alpha) \to M(\gamma) \to M(\beta) \to 0$.
(This is not a pseudo AR-sequence.)
We also see that 
there is only one locally free indecomposable $H$-module with rank vector $\alpha$,
namely $M(\alpha)$.
Thus $(\alpha,\beta,\gamma)$ is an $H$-admissible triple.

\subsubsection{Case 1.10}
Let
$\gamma = (1,1,2,2)$ and
$(\alpha,\beta) = ((1,0,0,0),(0,1,2,2))$.
We have
$$
M(\gamma)\df
\xymatrix@-1.8ex{
1 \ar[d] & 2 \ar[d]\ar[l] & 3 \ar[l] & 4 \ar[l]
\\
1 & 2 \ar[l] & 3 \ar[l] & 4 \ar[l]
}
\hspace{1cm}
M(\beta)\df
\xymatrix@-1.8ex{
2 \ar[d] & 3 \ar[l] & 4 \ar[l]
\\
2 & 3 \ar[l] & 4 \ar[l]
}
$$
There obviously exists a short exact sequence
$0 \to M(\alpha) \to M(\gamma) \to M(\beta) \to 0$.
(This is not a pseudo AR-sequence.)
There is only one locally free indecomposable $H$-module with rank vector $\alpha$.
Thus $(\alpha,\beta,\gamma)$ is an $H$-admissible triple.


\subsubsection{Case 2.1}
$\gamma = (1,1,1,1)$,
$(\alpha,\beta) = ((1,1,1,0),(0,0,0,1))$.
We have
$$
M(\alpha)\df
\xymatrix@-1.8ex{
1 \ar[r] \ar[d] & 2 \ar[d] & 3 \ar[l] \\
1 \ar[r] & 2 
}
\hspace{1cm}
M(\gamma)\df
\xymatrix@-1.8ex{
1 \ar[r] \ar[d] & 2 \ar[d] & 3 \ar[l] & 4 \ar[l]\\
1 \ar[r] & 2 
}
$$
There is a short exact sequence 
$0 \to M(\alpha) \to M(\gamma) \to M(\beta) \to 0$.
(This is not a pseudo AR-sequence.)
Obviously, $(\alpha,\beta,\gamma)$ is an $H$-admissible triple.

\subsubsection{Case 2.2}
Let $\gamma = (1,2,2,1)$ and
$(\alpha,\beta) = ((0,1,1,1),(1,1,1,0))$.
There is a pseudo AR-sequence 
$0 \to M(\alpha) \to M(\gamma) \to M(\beta) \to 0$.
We have
$$
M(\beta)\df
\xymatrix@-1.8ex{
1 \ar[d]\ar[r] & 2 \ar[d] & 3 \ar[l]
\\
1 \ar[r] & 2 
}
\hspace{1cm}
M(1,1,0,0)\df
\xymatrix@-1.8ex
{
1 \ar[d]\ar[r] & 2 \ar[d]
\\
1 \ar[r] & 2
}
\hspace{1cm}
M(0,1,2,0)\df
\xymatrix@-1.8ex
{
2 \ar[d] & 3 \ar[l]
\\
2 & 3 \ar[l]
}
$$
The only one other locally free indecomposable $H$-modules with rank vector $\beta$
are
$$
V_1(\beta)\df
\xymatrix@-1.8ex{
1 \ar[d]\ar[r] & 2 \ar[d] 
\\
1 \ar[r] & 2 & 3 \ar[l]
}
\hspace{1cm}
V_2(\beta)\df
\xymatrix@-1.8ex{
& 2 \ar[d] & 3 \ar[l]
\\
1 \ar[d]\ar[r]& 2 
\\
1
}
\hspace{1cm}
V_3(\beta)\df
\xymatrix@-1.8ex{
& 2 \ar[d] 
\\
1 \ar[r]\ar[d]& 2 & 3 \ar[l]
\\
1
}
$$
By Figure~\ref{F4C2} we have $\Hom_H(M(\gamma),M(0,1,2,0)) = 0$.
For $i=2,3$ we clearly have $\Hom_H(V_i(\beta),M(0,1,2,0)) \not= 0$.
Thus $V_i(\beta)$  cannot be a factor module of $M(\gamma)$
for $i = 2,3$.
Figure~\ref{F4C2} shows that $\Hom_H(M(\gamma),M(1,1,0,0)) = 0$.
On the other hand, we have $\Hom_H(V_1(\beta),M(1,1,0,0)) \not= 0$.
Thus $V_1(\beta)$ cannot be a factor module of $M(\gamma)$.
Thus $(\alpha,\beta,\gamma)$ is an $H$-admissible triple.

\subsubsection{Case 2.3}
Let
$\gamma = (1,2,3,1)$ and
$(\alpha,\beta) = ((1,1,1,0),(0,1,2,1))$.
There is a pseudo AR-sequence 
$0 \to M(\alpha) \to M(\gamma) \to M(\beta) \to 0$.
We have
$$
M(\alpha)\df
\xymatrix@-1.8ex{
1 \ar[d]\ar[r] & 2 \ar[d] & 3 \ar[l] 
\\
1 \ar[r] & 2 
}
\hspace{1cm}
M(\beta)\df
\xymatrix@-1.8ex{
2 \ar[d] & 3 \ar[l] & 4 \ar[l]
\\
2 & 3 \ar[l] 
}
$$
The only other locally free indecomposable $H$-modules with rank vector
$\alpha$ are
$$
U_1(\alpha)\df
\xymatrix@-1.8ex{
1 \ar[d]\ar[r] & 2 \ar[d] 
\\
1 \ar[r] & 2 & 3 \ar[l] 
}
\hspace{1cm}
U_2(\alpha)\df
\xymatrix@-1.8ex{
 & 2 \ar[d] & 3 \ar[l] 
\\
1 \ar[d]\ar[r] & 2 
\\
1
}
\hspace{1cm}
U_3(\alpha)\df
\xymatrix@-1.8ex{
 & 2 \ar[d] 
\\
1 \ar[d]\ar[r] & 2 & 3 \ar[l] 
\\
1
}
$$
For $i=2,3$
we have $\Hom_H(E_1,U_i(\alpha)) \not= 0$, and from
Figure~\ref{F4C2} we get $\Hom_H(E_1,M(\gamma)) = 0$.
Thus for $i=2,3$, $U_i(\alpha)$ cannot be a submodule of $M(\gamma)$.
We obviously have $\Hom_H(U_1(\alpha),M(\beta)) = 0$.
This implies that $U_1(\alpha)$ cannot be a submodule of $M(\gamma)$.
It follows that $(\alpha,\beta,\gamma)$ is an $H$-admissible triple.

\subsubsection{Case 2.4}
Let $\gamma = (1,2,3,2)$ and
$(\alpha,\beta) = ((0,1,2,1),(1,1,1,1))$.
There is a pseudo AR-sequence 
$0 \to M(\alpha) \to M(\gamma) \to M(\beta) \to 0$.
We have
$$
M(\alpha)\df
\xymatrix@-1.8ex{
2 \ar[d] & 3 \ar[l] & 4 \ar[l]
\\
2 & 3 \ar[l] 
}
\hspace{1cm}
M(\beta)\df
\xymatrix@-1.8ex{
1 \ar[d]\ar[r] & 2 \ar[d] & 3 \ar[l] & 4 \ar[l]
\\
1 \ar[r] & 2 
}
$$
The only other locally free indecomposable $H$-module with rank vector
$\alpha$ is
$$
U(\alpha)\df
\xymatrix@-1.8ex{
2 \ar[d] & 3 \ar[l]
\\
2 & 3 \ar[l]  & 4 \ar[l]
}
$$
We have $\Hom_H(U(\alpha),M(\beta)) = 0$.
Thus $U(\alpha)$ cannot be a submodule of $M(\gamma)$.
It follows that $(\alpha,\beta,\gamma)$ is an $H$-admissible triple.

\subsubsection{Case 2.5}
Let
$\gamma = (1,1,2,1)$ and
$(\alpha,\beta) = ((1,1,1,1),(0,0,1,0))$.
There is a pseudo AR-sequence
$0 \to M(\alpha) \to M(\gamma) \to M(\beta)
\to 0$.
The module $M(\beta)$ is the only locally free indecomposable $H$-module with rank vector $\beta$.
Thus $(\alpha,\beta,\gamma)$ is an $H$-admissible triple.

\subsubsection{Case 2.6}
Let
$\gamma = (1,3,4,2)$ and
$(\alpha,\beta) = ((0,1,2,0),(1,2,2,2))$.
There is a pseudo AR-sequence
$0 \to M(\alpha) \to M(\gamma) \to M(\beta)
\to 0$.
There is only one locally free indecomposable $H$-module with rank vector $\alpha$.
Thus $(\alpha,\beta,\gamma)$ is an $H$-admissible triple.

\subsubsection{Case 2.7}
Let $\gamma = (2,3,4,2)$ and
$(\alpha,\beta) = ((1,2,2,2),(1,1,2,0))$.
There is a pseudo AR-sequence
$0 \to M(\alpha) \to M(\gamma) \to M(\beta)
\to 0$.
We have
$$
M(\beta)\df
\xymatrix@-1.8ex{
1 \ar[d]\ar[r] & 2 \ar[d] & 3 \ar[l] 
\\
1 \ar[r] & 2 & 3 \ar[l] 
}
\hspace{1cm}
M(0,1,2,0)\df
\xymatrix@-1.8ex{
2 \ar[d] & 3 \ar[l] 
\\
2 & 3 \ar[l] 
}
$$
There is only one other locally free indecomposable $H$-module with rank
vector $\beta$, namely
$$
V(\beta)\df
\xymatrix@-1.8ex{
& 2 \ar[d] & 3 \ar[l] 
\\
1 \ar[d]\ar[r] & 2 & 3 \ar[l] 
\\
1
}
$$
Using Figure~\ref{F4C2} we get
$\Hom_H(M(\gamma),M(0,1,2,0)) = 0$.
Furthermore, we obviously have $\Hom_H(V(\beta),M(0,1,2,0)) \not= 0$.
Thus $V(\beta)$ cannot be a factor module of $M(\gamma)$.
It follows that $(\alpha,\beta,\gamma)$ is an $H$-admissible triple.

\subsubsection{Case 2.8}
Let
$\gamma = (1,2,4,2)$ and
$(\alpha,\beta) = ((1,1,2,0),(0,1,2,2))$.
There is a pseudo AR-sequence 
$0 \to M(\alpha) \to M(\gamma) \to M(\beta) \to 0$.
The module $M(\beta)$ is the only locally free indecomposable $H$-module with rank vector $\beta$.
Thus $(\alpha,\beta,\gamma)$ is an $H$-admissible triple.

\subsubsection{Case 2.9}
Let
$\gamma = (1,1,2,2)$ and
$(\alpha,\beta) = ((0,1,2,2),(1,0,0,0))$.
There is a pseudo AR-sequence 
$0 \to M(\alpha) \to M(\gamma) \to M(\beta) \to 0$.
The module $M(\beta)$ is the only locally free indecomposable $H$-module with rank vector $\beta$.
Thus $(\alpha,\beta,\gamma)$ is an $H$-admissible triple.

\subsubsection{Case 2.10} 
Let
$\gamma = (1,2,2,2)$ and
$(\alpha,\beta) = ((1,1,0,0),(0,1,2,2))$.
We have
$$
M(\alpha)\df
\xymatrix@-1.8ex{
1 \ar[d]\ar[r] & 2 \ar[d] 
\\
1 \ar[r]& 2 
}
\hspace{1cm}
M(\gamma)\df
\xymatrix@-1.8ex{
& 2 \ar[d] & 3 \ar[l] & 4 \ar[l]
\\
& 2 & 3 \ar[l]\ar[dl] & 4 \ar[l]
\\
1 \ar[d]\ar[r] & 2 \ar[d] 
\\
1 \ar[r]& 2 
}
\hspace{1cm}
M(\beta)\df
\xymatrix@-1.8ex{
& 2 \ar[d] & 3 \ar[l] & 4 \ar[l]
\\
& 2 & 3 \ar[l] & 4 \ar[l]
}
$$
There obviously exists a short exact sequence
$0 \to M(\alpha) \to M(\gamma) \to M(\beta) \to 0$.
(This is not a pseudo AR-sequence.)
The module $M(\beta)$ is the only locally free indecomposable $H$-module with
rank vector $\beta$.
Thus
$(\alpha,\beta,\gamma)$ is an $H$-admissible triple.


\subsubsection{Case 3.1}
$\gamma = (1,1,1,1)$,
$(\alpha,\beta) = ((1,1,1,0),(0,0,0,1))$.
We have
$$
M(\alpha)\df
\xymatrix@-1.8ex{
1 \ar[d] & 2 \ar[l]\ar[d] \\
1  & 2\ar[l]\ar[r] & 3 
}
\hspace{1cm}
M(\gamma)\df
\xymatrix@-1.8ex{
1 \ar[d] & 2 \ar[l]\ar[d] \\
1  & 2\ar[l]\ar[r] & 3 & 4 \ar[l]
}
$$
There obviously exists a short exact sequence
$0 \to M(\alpha) \to M(\gamma) \to M(\beta) \to 0$.
(This is not a pseudo AR-sequence.)
The module $M(\beta)$ is the only locally free indecomposable $H$-module with
rank vector $\beta$.
Thus $(\alpha,\beta,\gamma)$ is an $H$-admissible triple.

\subsubsection{Case 3.2}
Let
$\gamma = (1,1,2,1)$ and
$(\alpha,\beta) = ((0,0,1,1),(1,1,1,0))$.
There is a pseudo AR-sequence
$0 \to M(\alpha) \to M(\gamma) \to M(\beta)
\to 0$.
The module $M(\alpha)$ 
is the only locally free indecomposable $H$-module with rank vector $\alpha$.
Thus $(\alpha,\beta,\gamma)$ is an $H$-admissible triple.

\subsubsection{Case 3.3}
Let
$\gamma = (1,2,3,1)$ and
$(\alpha,\beta) = ((1,1,1,0),(0,1,2,1))$.
There is a pseudo AR-sequence 
$0 \to M(\alpha) \to M(\gamma) \to M(\beta) \to 0$.
We have
$$
M(\alpha)\df
\xymatrix@-1.8ex{
1 \ar[d] & 2 \ar[l]\ar[d] \\
1  & 2\ar[l]\ar[r] & 3 
}
\hspace{1cm}
M(\beta)\df
\xymatrix@-1.8ex{
2 \ar[d] \ar[r] & 3 & 4 \ar[l]\\
2 \ar[r] & 3 
}
$$
and
$$
M(1,1,0,0)\df
\xymatrix@-1.8ex{
1 \ar[d] & 2 \ar[l]\ar[d]\\
1  & 2 \ar[l]
}
\hspace{1cm}
M(0,1,1,0)\df
\xymatrix@-1.8ex{
2 \ar[d]\\
2 \ar[r] & 3
}
$$
There are only three more locally free indecomposable $H$-modules with rank
vector $\alpha$, namely
$$
U_1(\alpha)\df
\xymatrix@-1.8ex{
1 \ar[d] & 2 \ar[l]\ar[d] \ar[r] & 3 \\
1  & 2\ar[l]
}
\hspace{1cm}
U_2(\alpha)\df
\xymatrix@-1.8ex{
1 \ar[d]\\
1 & 2 \ar[l]\ar[d] \ar[r] & 3 \\
& 2
}
\hspace{1cm}
U_3(\alpha)\df
\xymatrix@-1.8ex{
1 \ar[d]\\
1  & 2 \ar[l]\ar[d] \\
& 2\ar[r] & 3 
}
$$
For $i=1,2$
we have $\Hom_H(M(1,1,0,0),U_i(\alpha)) \not= 0$, and from
Figure~\ref{F4C3} we know that $\Hom_H(M(1,1,0,0),M(\gamma)) = 0$.
Thus for $i=1,2$, $U_i(\alpha)$ cannot be a submodule of $M(\gamma)$.
One easily checks that $\Hom_H(U_3(\alpha),M(\beta)) = 0$.
Thus $U_3(\alpha)$ cannot be a submodule of $M(\gamma)$.
It follows that 
$(\alpha,\beta,\gamma)$ is an $H$-admissible triple.

\subsubsection{Case 3.4}
Let
$\gamma = (1,2,3,2)$ and
$(\alpha,\beta) = ((0,1,2,1),(1,1,1,1))$.
There is a pseudo AR-sequence 
$0 \to M(\alpha) \to M(\gamma) \to M(\beta) \to 0$.
We have
$$
M(\alpha)\df
\xymatrix@-1.8ex{
2 \ar[d]\ar[r] & 3 & 4 \ar[l]
\\
2 \ar[r] & 3
}
\hspace{1cm}
M(\beta)\df
\xymatrix@-1.8ex{
1 \ar[d]& 2 \ar[d]\ar[l]
\\
1 & 2 \ar[l]\ar[r] & 3 & 4 \ar[l]
}
$$
The only other locally free indecomposable $H$-module with rank vector $\alpha$
is
$$
U(\alpha)\df
\xymatrix@-1.8ex{
2 \ar[d]\ar[r] & 3
\\
2 \ar[r] & 3 & 4 \ar[l]
}
$$
We have $\Hom_H(U(\alpha),M(\beta)) = 0$.
Thus $U(\alpha)$ cannot be a submodule of $M(\gamma)$.
So $(\alpha,\beta,\gamma)$ is an $H$-admissible triple.

\subsubsection{Case 3.5}
Let
$\gamma = (1,2,2,1)$ and
$(\alpha,\beta) = ((1,1,1,1),(0,1,1,0))$.
There is a pseudo AR-sequence 
$0 \to M(\alpha) \to M(\gamma) \to M(\beta) \to 0$.
We have
$$
M(\alpha)\df
\xymatrix@-1.8ex{
1 \ar[d]& 2 \ar[d]\ar[l]
\\
1 & 2 \ar[l]\ar[r] & 3 & 4 \ar[l]
}
\hspace{1cm}
M(\beta)\df
\xymatrix@-1.8ex{
2 \ar[d] 
\\
2\ar[r] & 3
}
$$
There are only three more locally free indecomposable $H$-modules with rank
vector $\alpha$, namely
$$
U_1(\alpha)\df
\xymatrix@-1.8ex{
1 \ar[d]& 2 \ar[d]\ar[l]\ar[r] & 3 & 4 \ar[l]
\\
1 & 2 \ar[l]
}
\hspace{0.5cm}
U_2(\alpha)\df
\xymatrix@-1.8ex{
1 \ar[d]\\
1 & 2 \ar[d]\ar[l]\ar[r] & 3 & 4 \ar[l]
\\
& 2 
}
\hspace{0.5cm}
U_3(\alpha)\df
\xymatrix@-1.8ex{
1 \ar[d]\\
1 & 2 \ar[d]\ar[l]
\\
& 2 \ar[r] & 3 & 4 \ar[l]
}
$$
One easily sees that $\Hom_H(U_i(\alpha),M(\beta)) = 0$ for
$i=1,2,3$.
Thus none of the $U_i(\alpha)$ can be a submodule of $M(\gamma)$.
Thus
$(\alpha,\beta,\gamma)$ is an $H$-admissible triple.

\subsubsection{Case 3.6}
Let $\gamma = (1,2,4,2)$ and 
$(\alpha,\beta) = ((0,1,2,0),(1,1,2,2))$.
There exists a pseudo AR-sequence $0 \to M(\alpha) \to M(\gamma)
\to M(\beta) \to 0$.
There exists only one locally free indecomposable $H$-module with rank vector
$\alpha$.
Thus
$(\alpha,\beta,\gamma)$ is an $H$-admissible triple.

\subsubsection{Case 3.7}
Let
$\gamma = (2,3,4,2)$ and
$(\alpha,\beta) = ((1,1,2,2),(1,2,2,0))$.
There is a pseudo AR-sequence 
$0 \to M(\alpha) \to M(\gamma) \to M(\beta) \to 0$.
We have
$$
M(\alpha)\df
\xymatrix@-1.8ex{
1 \ar[d]& 2 \ar[d]\ar[l]\ar[r] & 3 & 4 \ar[l]
\\
1 & 2 \ar[l]\ar[r] & 3 & 4 \ar[l]
}
\hspace{1cm}
M(0,1,2,2)\df
\xymatrix@-1.8ex{
2 \ar[d]\ar[r] & 3 & 4 \ar[l]
\\
2 \ar[r] & 3 & 4 \ar[l]
}
$$
The only other locally free indecomposable $H$-module with rank vector $\alpha$
is
$$
U(\alpha)\df
\xymatrix@-1.8ex{
1 \ar[d]\\
1 & 2 \ar[d]\ar[l]\ar[r] & 3 & 4 \ar[l]
\\
& 2 \ar[r] & 3 & 4 \ar[l]
}
$$
Using Figure~\ref{F4C3} we get $\Hom_H(M(0,1,2,2),M(\gamma)) = 0$.
On the other hand
we have $\Hom_H(M(0,1,2,2),U(\alpha)) \not= 0$.
Thus $U(\alpha)$ cannot 
be a submodule of $M(\gamma)$.
Thus
$(\alpha,\beta,\gamma)$ is an $H$-admissible triple.

\subsubsection{Case 3.8}
Let
$\gamma = (1,3,4,2)$ and
$(\alpha,\beta) = ((1,2,2,0),(0,1,2,2))$.
There is a pseudo AR-sequence 
$0 \to M(\alpha) \to M(\gamma) \to M(\beta) \to 0$.
The module $M(\beta)$ is 
the only locally free indecomposable $H$-module with rank vector $\beta$.
Thus $(\alpha,\beta,\gamma)$ is an $H$-admissible triple.

\subsubsection{Case 3.9}
Let
$\gamma = (1,2,2,2)$ and
$(\alpha,\beta) = ((0,1,2,2),(1,1,0,0))$.
There is a pseudo AR-sequence 
$0 \to M(\alpha) \to M(\gamma) \to M(\beta) \to 0$.
The module $M(\alpha)$ is 
the only locally free indecomposable $H$-module with rank vector $\alpha$.
Thus $(\alpha,\beta,\gamma)$ is an $H$-admissible triple.

\subsubsection{Case 3.10}
$\gamma = (1,1,2,2)$,
$(\alpha,\beta) = ((1,0,0,0),(0,1,2,2))$.
We have
$$
M(\gamma)\df
\xymatrix@-1.8ex{
1 \ar[d] & 2 \ar[l]\ar[d]\ar[r] & 3 & 4 \ar[l]\\
1  & 2\ar[l]\ar[r] & 3 & 4 \ar[l]
}
\hspace{1cm}
M(\beta)\df
\xymatrix@-1.8ex{
2 \ar[d]\ar[r] & 3 & 4 \ar[l]\\
2\ar[r] & 3 & 4 \ar[l]
}
$$
There obviously exists a short exact sequence
$0 \to M(\alpha) \to M(\gamma) \to M(\beta) \to 0$.
(This is not a pseudo AR-sequence.)
The module $M(\alpha)$ is the only locally free indecomposable $H$-module with
rank vector $\alpha$.
Thus $(\alpha,\beta,\gamma)$ is an $H$-admissible triple.


\subsubsection{Case 4.1}
Let
$\gamma = (1,1,2,1)$ and
$(\alpha,\beta) = ((0,0,0,1),(1,1,2,0))$.
We have
$$ 
M(\gamma)\df
\xymatrix@-1.8ex{
1 \ar[d] & 2\ar[l] \ar[d] & 3\ar[l]
\\
1 & 2\ar[l]  & 3\ar[l]\ar[r] & 4
}
\hspace{1cm}
M(\beta)\df
\xymatrix@-1.8ex{
1 \ar[d] & 2\ar[l] \ar[d] & 3\ar[l]
\\
1 & 2\ar[l]  & 3\ar[l]
}
$$
There obviously exists a short exact sequence
$0 \to M(\alpha) \to M(\gamma) \to M(\beta) \to 0$.
(This is not a pseudo AR-sequence.)
The module $M(\alpha)$ is the only locally free indecomposable $H$-module with
rank vector $\alpha$.
Thus $(\alpha,\beta,\gamma)$ is an $H$-admissible triple.

\subsubsection{Case 4.2}
Let
$\gamma = (1,1,1,1)$ and
$(\alpha,\beta) = ((0,0,0,1),(1,1,1,0))$.
There is a pseudo AR-sequence 
$0 \to M(\alpha) \to M(\gamma) \to M(\beta) \to 0$.
The module $M(\alpha)$ is the only locally free indecomposable $H$-module with rank vector $\alpha$.
Thus $(\alpha,\beta,\gamma)$ is an $H$-admissible triple.

\subsubsection{Case 4.3}
Let $\gamma = (1,2,2,1)$ and
$(\alpha,\beta) = ((1,1,1,0),(0,1,1,1))$.
There is a pseudo AR-sequence 
$0 \to M(\alpha) \to M(\gamma) \to M(\beta) \to 0$.
We have
$$
M(\alpha)\df
\xymatrix@-1.8ex{
1 \ar[d] & 2 \ar[d]\ar[l] & 3 \ar[l]
\\
1 & 2 \ar[l]
}
\hspace{1cm}
M(\beta)\df
\xymatrix@-1.8ex{
2 \ar[d] & 3 \ar[l]\ar[r] & 4
\\
2 
}
$$
There is only one more locally free indecomposable $H$-module with dimension vector $\beta$, namely
$$
V(\beta)\df
\xymatrix@-1.8ex{
2 \ar[d] 
\\
2 & 3 \ar[l]\ar[r] & 4
}
$$
We have $\Hom_H(V(\beta),E_2) \not= 0$.
By Figure~\ref{F4C4} we have $\Hom_H(M(\gamma),E_2) = 0$.
Thus $V(\beta)$ cannot be a factor module of $M(\gamma)$.
This shows that
$(\alpha,\beta,\gamma)$ is an $H$-admissible triple.

\subsubsection{Case 4.4}
Let $\gamma = (1,2,3,2)$ and
$(\alpha,\beta) = ((0,1,1,1),(1,1,2,1))$.
There is a pseudo AR-sequence 
$0 \to M(\alpha) \to M(\gamma) \to M(\beta) \to 0$.
We have
$$
M(\alpha)\df
\xymatrix@-1.8ex{
2 \ar[d]& 3 \ar[l]\ar[r] & 4
\\
2 
}
\hspace{1cm}
M(\beta)\df
\xymatrix@-1.8ex{
1 \ar[d] & 2 \ar[d]\ar[l] & 3 \ar[l]
\\
1 & 2 \ar[l]& 3 \ar[l]\ar[r] & 4
}
$$
There is only one other locally free indecomposable $H$-module with rank vector
$\alpha$, namely
$$
U(\alpha)\df
\xymatrix@-1.8ex{
2 \ar[d] 
\\
2 & 3 \ar[l]\ar[r] & 4
}
$$
We have $\Hom_H(U(\alpha),M(\beta)) = 0$.
Thus $U(\alpha)$ cannot be a submodule of $M(\gamma)$.
Thus
$(\alpha,\beta,\gamma)$ is an $H$-admissible triple.

\subsubsection{Case 4.5}
Let $\gamma = (1,2,3,1)$ and
$(\alpha,\beta) = ((1,1,2,1),(0,1,1,0))$.
There is a pseudo AR-sequence 
$0 \to M(\alpha) \to M(\gamma) \to M(\beta) \to 0$.
We have
$$
M(\beta)\df
\xymatrix@-1.8ex{
2 \ar[d] & 3 \ar[l]
\\
2 
}
$$
There is only one other locally free indecomposable $H$-module with rank vector
$\beta$, namely
$$
V(\beta)\df
\xymatrix@-1.8ex{
2 \ar[d] 
\\
2 & 3 \ar[l]
}
$$
By Figure~\ref{F4C4} we have $\Hom_H(M(\gamma),E_2) = 0$.
We have $\Hom_H(V(\beta),E_2) \not= 0$.
Thus $V(\beta)$ cannot be a factor module of $M(\gamma)$.
So
$(\alpha,\beta,\gamma)$ is an $H$-admissible triple.

\subsubsection{Case 4.6}
Let $\gamma = (1,2,2,2)$ and
$(\alpha,\beta) = ((0,1,0,0),(1,1,2,2))$.
There is a pseudo AR-sequence 
$0 \to M(\alpha) \to M(\gamma) \to M(\beta) \to 0$.
The only locally free indecomposable $H$-module with rank vector $\alpha$
is $M(\alpha)$.
Thus $(\alpha,\beta,\gamma)$ is an $H$-admissible triple.

\subsubsection{Case 4.7}
Let $\gamma = (2,3,4,2)$ and
$(\alpha,\beta) = ((1,1,2,2),(1,2,2,0))$.
There is a pseudo AR-sequence 
$0 \to M(\alpha) \to M(\gamma) \to M(\beta) \to 0$.
We have
$$
M(\alpha)\df
\xymatrix@-1.8ex{
1 \ar[d] & 2 \ar[d]\ar[l] & 3 \ar[l]\ar[r] & 4
\\
1 & 2 \ar[l]& 3 \ar[l]\ar[r] & 4
}
\hspace{1cm}
M(0,1,1,0)\df
\xymatrix@-1.8ex{
2 \ar[d] & 3 \ar[l]
\\
2 
}
$$
There is only one other locally free indecomposable $H$-module with rank vector
$\alpha$, namely
$$
U(\gamma)\df
\xymatrix@-1.8ex{
1 \ar[d]
\\
1 & 2 \ar[d]\ar[l] & 3 \ar[l]\ar[r] & 4
\\
& 2 & 3 \ar[l]\ar[r] & 4
}
$$
We see from Figure~\ref{F4C4} that $\Hom_H(M(0,1,1,0),M(\gamma)) = 0$.
On the other hand
we have $\Hom_H(M(0,1,1,0),U(\alpha)) \not= 0$.
Thus $U(\alpha)$ cannot be a submodule of $M(\gamma)$.
So
$(\alpha,\beta,\gamma)$ is an $H$-admissible triple.

\subsubsection{Case 4.8}
Let
$\gamma = (1,3,4,2)$ and
$(\alpha,\beta) = ((1,2,2,0),(0,1,2,2))$.
There is a pseudo AR-sequence 
$0 \to M(\alpha) \to M(\gamma) \to M(\beta) \to 0$.
The module $M(\beta)$ is the only locally free indecomposable $H$-module with rank vector $\beta$.
Thus
$(\alpha,\beta,\gamma)$ is an $H$-admissible triple.

\subsubsection{Case 4.9}
Let $\gamma = (1,2,4,2)$ and
$(\alpha,\beta) = ((0,1,2,2),(1,1,2,0))$.
There is a pseudo AR-sequence 
$0 \to M(\alpha) \to M(\gamma) \to M(\beta) \to 0$.
The module $M(\alpha)$ is
the only locally free indecomposable $H$-module with rank vector $\alpha$.
Thus
$(\alpha,\beta,\gamma)$ is an $H$-admissible triple.

\subsubsection{Case 4.10}
Let $\gamma = (1,1,2,2)$ and
$(\alpha,\beta) = ((1,0,0,0),(0,1,2,2))$.
We have
$$ 
M(\gamma)\df
\xymatrix@-1.8ex{
1 \ar[d] & 2\ar[l] \ar[d] & 3\ar[l]\ar[r] & 4
\\
1 & 2\ar[l]  & 3\ar[l]\ar[r] & 4
}
\hspace{1cm}
M(\beta)\df
\xymatrix@-1.8ex{
2 \ar[d] & 3\ar[l]\ar[r] & 4
\\
2  & 3\ar[l]\ar[r] & 4
}
$$
There obviously exits a short exact sequence
$0 \to M(\alpha) \to M(\gamma) \to M(\beta) \to 0$.
(This is not a pseudo AR-sequence.)
The module $M(\alpha)$ is the only locally free indecomposable $H$-module with
rank vector $\alpha$.
Thus $(\alpha,\beta,\gamma)$ is an $H$-admissible triple.

\subsection{Type $G_2$}\label{primitive-G_2}
Let 
$$
C = \left(\begin{matrix} 2&-1\\-3&2\end{matrix}\right)
$$ 
be a Cartan matrix of type $G_2$.
Then $D = \diag(3,1)$ is the minimal symmetrizer of $C$.
Let $\Omega = \{ (1,2) \}$.
(The case $\Omega = \{ (2,1) \}$ is done dually.)
The algebra $H = H(C,D,\Omega)$ is defined by the quiver
$$
\xymatrix{
1\ar@(ul,ur)^{\vep_1} & \ar[l]^{\alpha_{12}} 2 
}
$$
with relation $\vep_1^3 = 0$.
The Auslander-Reiten quiver of the corresponding modulated graph of
type $G_2$ is shown in Figure~\ref{G2}.
The two subgraphs of the form
$$
\xymatrix@-0.3cm{
& \gamma \ar[dr]\\
\alpha \ar[ur]^3 && \beta
}
$$
stand for Auslander-Reiten sequences 
$$
0 \to X(\alpha) \to X(\gamma)^3 \to X(\beta) \to 0.
$$
\begin{figure}
$$
\xymatrix@-0.3cm{
& (1,1) \ar[dr] && (1,2) \ar[dr]\ar@{-->}[ll] && (0,1)\ar@{-->}[ll]
\\
(1,0) \ar[ur]^<<<<3 && (2,3) \ar[ur]^<<<<3\ar@{-->}[ll] && (1,3) \ar[ur]^<<<<3\ar@{-->}[ll]
}
$$
\caption{Type $G_2$}
\label{G2}
\end{figure}
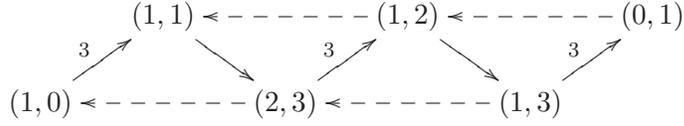

As before, for the simple roots $\alpha_i$, $i=1,2$ we define
$\theta_{\alpha_i} := \theta_i$.
Now we treat the 
remaining four positive roots
$(1,1)$, $(1,2)$, $(1,3)$, $(2,3)$.

\subsubsection{Case 1.1}
Let $\gamma = (1,1)$ and $(\alpha,\beta) = ((1,0),(0,1))$.
We have
$$
M(\gamma)\df
\xymatrix@-0.3cm{
1 \ar[d] & 2 \ar[l]\\
1 \ar[d]\\
1
}
$$
There is a short exact sequence
$0 \to M(\alpha) \to M(\gamma) \to M(\beta) \to 0$.
(This is not a pseudo AR-sequence.)
Set
$\theta_\gamma := [\theta_\alpha,\theta_\beta]$.
It follows that $\theta_\gamma(M(\gamma)) = 1$.

\subsubsection{Case 1.2}
Let $\gamma = (1,2)$ and $(\alpha,\beta) = ((1,0),(0,1))$.
We have
$$
M(\gamma)\df
\xymatrix@-0.3cm{
1 \ar[d] & 2\ar[l]\\
1 \ar[d] & 2\ar[l]\\
1
}
$$
There is a short exact sequence
$0 \to M(\alpha) \to M(\gamma) \to M(\beta)^2 \to 0$.
(This is not a pseudo AR-sequence.)
It is straightforward to check that $M(\gamma)$ does not have any
filtrations of type $(\beta,\beta,\alpha)$ or $(\beta,\alpha,\beta)$.
Set
$\theta_\gamma := 1/2[[\theta_\alpha,\theta_\beta],\theta_\beta]$.
It follows that $\theta_\gamma(M(\gamma)) = 1$.

\subsubsection{Case 1.3}
Let $\gamma = (1,3)$ and $(\alpha,\beta) = ((1,0),(0,1))$.
We have
$$
M(\gamma)\df
\xymatrix@-0.3cm{
1 \ar[d] & 2\ar[l]\\
1 \ar[d] & 2\ar[l]\\
1 & 2\ar[l]
}
$$
There is a short exact sequence
$0 \to M(\alpha) \to M(\gamma) \to M(\beta)^3 \to 0$.
(This is not a pseudo AR-sequence.)
Obviously, $M(\gamma)$ does not have any
filtrations of type $(\beta,\beta,\beta,\alpha)$, 
$(\beta,\beta,\alpha,\beta)$ or $(\beta,\alpha,\beta,\beta)$.
Set
$\theta_\gamma := 1/6[[[\theta_\alpha,\theta_\beta],\theta_\beta],\theta_\beta]$.
It follows that $\theta_\gamma(M(\gamma)) = 1$.

\subsubsection{Case 1.4}
Let $\gamma = (2,3)$ and $(\alpha,\beta) = ((1,1),(1,2))$.
We have
$$
M(\alpha)\df
\xymatrix@-0.3cm{
1\ar[d] & 2 \ar[l]
\\
1 \ar[d] 
\\
1 
}
\hspace{1cm}
M(\gamma)\df
\xymatrix@-0.3cm{
1\ar[d] & & 2 \ar@/_3ex/[ll]
\\
1 \ar[d] &1 \ar[d]& 2 \ar@/_3ex/[ll]\ar[l]
\\
1 & 1 \ar[d] & 2 \ar[l]
\\
& 1
}
\hspace{1cm}
M(\beta)\df
\xymatrix@-0.3cm{
1 \ar[d]& 2 \ar[l]
\\
1 \ar[d] & 2 \ar[l]
\\
1
}
$$
There is a pseudo AR-sequence 
$0 \to M(\alpha) \to M(\gamma) \to M(\beta) \to 0$.
Using Figure~\ref{G2} 
we get that $\Ext_H^1(M(\beta),M(\alpha))$ is $1$-dimensional.
Thus up to equivalence there exists only one non-split extension of the above form.
In contrast to all other cases studied before,
we have $\Hom_H(M(\alpha),M(\beta)) \not= 0$ and there is
more than one submodule of $M(\gamma)$ which is isomorphic to
$M(\alpha)$.
So we need to use a different strategy for this exceptional case.
We have $\theta_\alpha = [\theta_1,\theta_2]$ and
$\theta_\beta = 1/2[[\theta_1,\theta_2],\theta_2]$.
Now define $\theta_\gamma := 1/2[\theta_\alpha,\theta_\beta]$.
We get
$$
\theta_\gamma = 1/4([\theta_1,\theta_2] * [[\theta_1,\theta_2],\theta_2]
- [[\theta_1,\theta_2],\theta_2] * [\theta_1,\theta_2]).
$$
Expanding this, we get
\begin{align*}
\theta_\gamma &= 1/4(\theta_1*\theta_2*\theta_1*\theta_2*\theta_2 
- 3(\theta_1*\theta_2*\theta_2*\theta_1*\theta_2)\\ 
&\;\;\;\; + 2(\theta_1*\theta_2*\theta_2*\theta_2*\theta_1) 
+ \text{(a sum of monomials starting with $\theta_2$)}).
\end{align*}
Let $\Phi = (\beta_1,\ldots,\beta_5)$ with
$\beta_i \in \{ \alpha_1,\alpha_2\}$.
Then one easily checks that $M(\gamma)$ has a filtration of type $\Phi$ only if $\Phi = (\alpha_1,\alpha_2,\alpha_1,\alpha_2,\alpha_2)$ or
$\Phi = (\alpha_1,\alpha_1,\alpha_2,\alpha_2,\alpha_2)$.
Thus we have 
$$
\theta_\gamma(M(\gamma)) = 1/4(\theta_1*\theta_2*\theta_1*\theta_2*\theta_2)(M(\gamma)) .
$$
Now one calculates that
$\theta_\gamma(M(\gamma)) = 1$.


\section{Proof of the main result}
\label{sec-proofmain}


\subsection{The main result}
The following theorem is our main result. 
It is an analogue of \cite[Theorem 4.7]{S} for our algebra 
$H(C,D,\Omega)$, where $C$ is of Dynkin type.

\begin{Thm}\label{thm-main}
Let $H = H(C,D,\Omega)$, $\cM = \cM(H)$ and $\n = \n(C)$.
Assume that $C$ is of Dynkin type.
Then the following hold:
\begin{itemize}

\item[(i)]
The Lie algebra 
$\PP(\cM)$ is isomorphic to $\n$.

\item[(ii)]
The homomorphism $\eta_H\df U(\n) \to \M$ is an isomorphism of Hopf algebras.
 
\end{itemize}
\end{Thm}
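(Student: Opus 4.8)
The plan is to deduce part (ii) from part (i) by a formal enveloping-algebra argument, and to prove part (i) by reducing it to a non-vanishing statement for the weight spaces of $\PP(\cM)$. For the first reduction: by Proposition~\ref{prop-M-Hopf} the Hopf algebra $\cM$ is the universal enveloping algebra $U(\PP(\cM))$, and by Corollary~\ref{cor-phi} together with Section~\ref{sec-weight} the map $\eta_H$ restricts to a surjective homomorphism of graded Lie algebras $\n \to \PP(\cM)$ sending $e_i$ to $\theta_i$. If (i) holds, so that this restriction is an isomorphism of Lie algebras, then by the universal property of enveloping algebras and the Poincar\'e--Birkhoff--Witt theorem the induced algebra homomorphism $U(\n) \to U(\PP(\cM)) = \cM$ -- which is precisely $\eta_H$ -- is an isomorphism; it is automatically a Hopf algebra isomorphism since the comultiplication and the antipode of both sides are determined by their values on the primitive generators. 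Thus everything reduces to proving (i).

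Next I would reduce (i) to a non-vanishing statement. The kernel of the surjective graded Lie algebra homomorphism $\n \to \PP(\cM)$ is a graded ideal $\mathfrak{k} = \bigoplus_{\gamma} \mathfrak{k}_\gamma$ of $\n$. Since $C$ is of Dynkin type we have $\dim \n_\gamma \le 1$ for all $\gamma$, with $\dim \n_\gamma = 1$ exactly for $\gamma \in \Delta^+$ (Section~\ref{sec-weight}). Hence $\mathfrak{k} = 0$ if and only if $\PP(\cM)_\gamma \neq 0$ for every $\gamma \in \Delta^+$; for $\gamma \notin \Delta^+$ one automatically has $\PP(\cM)_\gamma = 0$, being a quotient of $\n_\gamma = 0$. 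So it suffices to produce, for each positive root $\gamma$, a non-zero primitive element of $\cM$ of degree $\gamma$.

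When $D$ is the minimal symmetrizer this is exactly Theorem~\ref{thm-primitive}: the element $\theta_\gamma$ constructed there (via Schofield's theorem for symmetric $C$, and via the explicit constructions of Sections~\ref{primitive-B_n}--\ref{primitive-G_2} otherwise) lies in $\PP(\cM)_\gamma$ and satisfies $\theta_\gamma(M(\gamma)) = 1$, hence is non-zero. For an \emph{arbitrary} symmetrizer $D$ the explicit combinatorics is no longer available, and this is the step I expect to be the main difficulty: as $D$ grows, the varieties $\repvp(H,\br)$ become larger and their orbit structure finer, so one must still certify that no iterated bracket of the $\theta_i$ degenerates to $0$. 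Here I would invoke the geometry of \cite{GLS2} together with \cite[Theorem~1.2]{GLS1}: every $\gamma \in \Delta^+$ is the rank vector of a rigid indecomposable locally free $H$-module $M(\gamma)$, and by Proposition~\ref{fibredim1} the orbit $\cO_{M(\gamma)}$ is dense and open in the irreducible variety $\repvp(H,\gamma)$. One then shows -- arguing as in Section~\ref{extgeometry} and using the comparison of the varieties $\repvp(H,\br)$ for different symmetrizers provided by \cite{GLS2} -- that the image $\eta_{H,\gamma}(e_\gamma)$ of a non-zero element $e_\gamma \in \n_\gamma$, which is a fixed signed sum $\sum_{\bi} \pm\,\chi(\cFl_{M(\gamma),\bi})$ of Euler characteristics of flag varieties, takes a non-zero value at $M(\gamma)$. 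Granting this, $\eta_{H,\gamma}$ is non-zero for every $\gamma \in \Delta^+$, which yields (i), and with it (ii).
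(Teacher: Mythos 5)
Your proof follows essentially the same route as the paper: reduce (ii) to (i) via $\cM \cong U(\PP(\cM))$ and PBW, reduce (i) to the non-vanishing of $\PP(\cM)_\gamma$ for every positive root $\gamma$ using the one-dimensionality of $\n_\gamma$ in Dynkin type, and handle the minimal symmetrizer by Theorem~\ref{thm-primitive} (with Schofield covering the symmetric cases). For arbitrary $D$ your outline is directionally correct but imprecise: the mechanism in the paper is not the extension-variety argument of Section~\ref{extgeometry}, but rather the reduction functor $R_\vp(l)\df \repvp(H(l)) \to \repvp(H(l-1))$ of \cite{GLS2}, which induces a bijection on rigid locally free modules preserving rank vectors, combined with \cite[Corollary~1.3]{GLS2} which gives the equality of Euler characteristics $\chi(\cFl_{M(k),\bi}) = \chi(\cFl_{M,\bi})$ for the corresponding rigid modules over $H(C,kD,\Omega)$ and $H(C,D,\Omega)$. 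That equality is what lets one push forward the element $\theta_\alpha$ (expanded as $\sum_\bi \lambda_\bi \theta_\bi$) along the natural map $\cM(H) \to \cM(H(k))$ and evaluate it at $M_{H(k)}(\alpha)$ to still get $1$; you should cite that corollary explicitly rather than appeal vaguely to ``the comparison of the varieties.''
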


\begin{Conj}\label{conj-main}
Theorem~\ref{thm-main} remains true
for all symmetrizable generalized Cartan matrices $C$ and
all symmetrizers $D$ of $C$.
\end{Conj}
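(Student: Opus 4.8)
We sketch a possible strategy for Conjecture~\ref{conj-main}. By Corollary~\ref{cor-phi} the map $\eta_H\df U(\n)\to\cM$ is surjective for every symmetrizable $C$ and every symmetrizer $D$, and by Proposition~\ref{prop-M-Hopf} it is a Hopf algebra isomorphism precisely when it is injective; using the $\N^n$-grading of \S\ref{sec-weight} this is equivalent to $\dim\cM_\alpha=\dim\n_\alpha$ for all $\alpha\in\N^n$, the inequality $\le$ being automatic from surjectivity. So the whole point is a lower bound: for each positive root $\alpha$ of $\g(C)$, of multiplicity $m_\alpha$, one must produce an $m_\alpha$-dimensional space of primitive elements of $\cM$ of degree $\alpha$. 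Beyond Dynkin type the new phenomena are the imaginary roots, where $m_\alpha>1$, and, already among real roots, those not realized by a preprojective or preinjective module.

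The approach I would take is a bilinear form argument, which treats all roots and all symmetrizers simultaneously. The plan is to equip $\cM$ with a symmetric bilinear form $(-,-)$ which is (a) graded, (b) a Hopf pairing of $\cM$ with itself relative to the comultiplication $c$ of Proposition~\ref{prop-com}, i.e.\ $(f*g,h)=(f\otimes g,c(h))$ and $(f,g*h)=(c(f),g\otimes h)$, and (c) normalized so that $(\theta_i,\theta_i)\ne0$ for all $i$; the off-diagonal values $(\theta_i,\theta_j)$ with $i\ne j$ then vanish automatically by (a). Such a form should come from a geometric pairing on the moduli of locally free $H$-modules, its multiplicativity being encoded by Euler characteristics of the flag varieties $\cFl_{X,\bi}$ of Lemma~\ref{lem-support} and of the extension varieties $\cE(M,N)$ of \S\ref{extgeometry}. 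Granting such a form, its pullback $\eta_H^{*}(-,-)$ is a graded Hopf pairing on $U(\n)$ with $\langle e_i,e_i\rangle\ne0$; since $U(\n)$ is graded connected and generated in degree one, such a pairing is uniquely determined by its values on the generators (by induction on degree, using axiom (b)), hence coincides, up to rescaling the $e_i$, with the standard bilinear form on $U(\n)$, whose radical is zero by classical theory (\cite{K}; cf.\ the $q\to 1$ specialization of the form in \cite{Lu1}). Consequently any $x\in\ker\eta_H$ satisfies $\langle x,y\rangle=(\eta_H x,\eta_H y)=0$ for all $y$, so $x$ lies in the radical, i.e.\ $x=0$; thus $\eta_H$ is injective.

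In the real-root range this can be supplemented by the explicit constructions of the present paper. Sections~\ref{section-pseudo} and \ref{adm-triples} are already formulated for arbitrary symmetrizable $C$, so the pseudo Auslander--Reiten sequences of Theorem~\ref{thm-pseudoAR} together with the admissible-triple calculus of \S\ref{adm-triples} should still yield, for every preprojective positive root $\alpha$, a primitive $\theta_\alpha$ with $\theta_\alpha(M(\alpha))=1$, and dually for preinjective roots; the remaining real roots (for instance those realized by rigid regular modules in affine type) should be reached by running the same machinery on the appropriate rigid modules. For an arbitrary symmetrizer one would first reduce to the minimal one, extending to non-Dynkin type the geometric comparison of the varieties $\repvp(H(C,kD,\Omega),\br)$ from \cite{GLS2} that is used in Section~\ref{sec-proofmain}; this reduction should be essentially formal. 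These explicit methods do not, however, reach the general imaginary root, which is precisely where the bilinear form argument seems indispensable.

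The main obstacle is the construction and, above all, the multiplicativity of the form on $\cM$. In the symmetric case over $\FF_q$ this is Green's formula, and over function fields of species it is due to Hubery, but over $\C$ with Euler characteristics there is no $|\Aut(M)|$ available and the naive Hall pairing is not even well defined; worse, for an imaginary rank vector there are infinitely many isomorphism classes, so the pairing must be an honest integral over a moduli stack, with stabilizers accounted for, rather than a sum. One must identify the correct normalization and replace the homological identity underlying Green's formula by a statement about Euler characteristics of the varieties $\cE(M,N)$ and their flag refinements, presumably via the $\C^{*}$-localization technique of \cite{BB} already used in the proof of Proposition~\ref{prop-com}. Two further complications are intrinsic to the present setting: the filtrations defining $\cM$ are by the modules $E_i$ rather than by simple modules, so the non-locally-free locus of $\rep(H,\bd)$ must be controlled throughout (cf.\ Lemma~\ref{lem-support}); and in indefinite type there is no positivity, canonical basis, or bound $\dim\n_\alpha\le1$ to fall back on, so the entire argument rests on getting this Euler-characteristic Green identity right. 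I expect this to be the hard part of the conjecture.
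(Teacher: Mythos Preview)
This statement is a \emph{conjecture}, not a theorem: the paper offers no proof. Immediately after stating it the authors write that they ``tried to prove Conjecture~\ref{conj-main} by adapting Schofield's \cite{S} approach to our setup'' but ``couldn't find an analogue for some delicate steps in his proof.'' So there is nothing on the paper's side to compare your proposal against, and your write-up is, as you yourself say, a strategy sketch rather than a proof.

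That said, your outline is well-aimed. The bilinear-form argument you describe is precisely the core of Schofield's method: build a nondegenerate Hopf pairing on the convolution algebra, pull it back along $\eta_H$, identify it with the standard form on $U(\n)$, and conclude injectivity from nondegeneracy. The obstacle you isolate --- constructing the form and, crucially, proving its multiplicativity via an Euler-characteristic analogue of Green's formula, in a setting where there is no $|\Aut(M)|$, where imaginary rank vectors give continuous families, and where filtrations are by the $E_i$ rather than by simples --- is almost certainly exactly the ``delicate step'' the authors could not adapt. Your remarks about the reduction from an arbitrary symmetrizer to the minimal one via \cite{GLS2}, and about handling preprojective/preinjective real roots with the machinery of Sections~\ref{section-pseudo} and~\ref{adm-triples}, are reasonable and consistent with what the paper already does in Dynkin type; but as you note, these do not touch the imaginary roots, which is where the real difficulty lies.

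In short: your proposal is not a proof and does not claim to be one; it is a coherent plan that identifies the same bottleneck the authors encountered. The conjecture remains open.
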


We tried to prove Conjecture~\ref{conj-main} by
adapting Schofield's \cite{S} approach to our setup.
However we couldn't find an analogue for some delicate steps 
in his proof.

\subsection{Proof of Theorem~\ref{thm-main}}
Let $C$ be a symmetrizable generalized Cartan matrix, and let
$D$ be a symmetrizer of $C$.
Furthermore, let $\Omega$ be an orientation of $C$.
For $C$ symmetric and $D$ the identity matrix we know already from
\cite{S} that $\eta_H\df U(\n) \to \cM$ is an isomorphism.
This covers the Dynkin types $A_n$, $D_n$, $E_6$, $E_7$ and $E_8$.

Recall from Corollary~\ref{cor-phi} that there is a surjective Hopf algebra
homomorphism $\eta_H\df U(\n) \to \cM$ which maps $e_i$ to 
$\theta_i$ for all $i$.
The homomorphism $\eta$ restricts to a surjective Lie algebra homomorphism
$\n \to \cP(\cM)$ and to a surjective linear map
$\eta_{H,\gamma}\df \n_\gamma \to \cP(\cM)_\gamma$ of weight spaces.
(The positive roots are the weights of $\n$.)
If $C$ is of Dynkin type, then all weight spaces $\n_\gamma$ are
$1$-dimensional. 
Now for $C$ Dynkin and $D$ minimal, Theorem~\ref{thm-primitive}
implies that $\eta_{H,\gamma}$ is an isomorphism for all $\gamma$.
Thus $\eta_H$ restricts to a Lie algebra isomorphism $\n \to \cP(\cM)$.
But this implies that $\eta_H$ itself has to be an algebra isomorphism.
This proves Theorem~\ref{thm-main} for $D$ minimal symmetrizer.

Let now $D$ be minimal and fix some $k \ge 2$.
Without loss of generality we can assume that $C$ is connected.
(This is equivalent to $Q(C,\Omega)$ being connected.)
For $l\ge 2$, set $H = H(C,D,\Omega)$ and $H(l) := H(C,lD,\Omega)$.
Recall that, writing $Z(l):=K[\epsilon]/(\epsilon^l)$, we have that
$H(l)$ is a $Z(l)$-algebra which is free as a $Z(l)$-module.
In \cite[Section~2]{GLS2} we construct a functor 
\[
R_\vp(l)\df \repvp(H(l)) \to \repvp(H(l-1)),\quad M \mapsto M/\epsilon^{l-1}M, \qquad (l\ge 2).
\]
By \cite[Lemma~2.1 and Proposition~2.2]{GLS2} the functor $R_\vp(l)$ preserves rank vectors and yields a
bijection between the isomorphism classes of rigid locally free $H(l)$-modules and the
isomorphism classes of rigid locally free $H(l-1)$-modules.
Suppose $M(k)$ is a rigid locally free $H(k)$-module, and
let 
$$
M := (R_\vp(2) \circ \cdots \circ R_\vp(k))(M(k))
$$ 
be the corresponding rigid locally free $H$-module.
Then we have $\chi(\cFl_{M(k),\bi}) = \chi(\cFl_{M,\bi})$
for all sequences $\bi$, see \cite[Corollary~1.3]{GLS2}.

We proved already that
we have a Hopf algebra isomorphism $\eta_H\df U(\n) \to \cM(H)$.
We also have a surjective Hopf algebra homomorphism 
$\eta_{H(k)}\df U(\n) \to \cM(H(k))$ by Corollary~\ref{cor-phi}.
This yields a surjective Hopf algebra homomorphism $\psi\df \cM(H) \to \cM(H(k))$ sending $\theta_\bi := \theta_{i_1} \cdots \theta_{i_t}$
to  $\theta_\bi' := \theta_{i_1}' \cdots \theta_{i_t}'$ with $\theta_i' := \psi(\theta_i) = \eta_{H(k)}(e_i)$ for all sequences
$\bi = (i_1,\ldots,i_t)$ with $1 \le i_j \le n$ and $t \ge 1$.
$$
\xymatrix{
U(\n) \ar[r]^{\eta_H}\ar[d]_{\eta_{H(k)}} & \cM(H) \ar[dl]^{\psi}
\\
\cM(H(k))
}
$$
Let $\alpha \in \Delta^+(C)$ be a positive root, and 
let $M_H(\alpha)$ (\resp $M_{H(k)}(\alpha)$) be the corresponding indecomposable
rigid locally free $H$-module (resp. $H(k)$-module) with rank vector $\alpha$.
By Theorem~\ref{thm-primitive}
there exists an element 
$\theta_\alpha \in \cP(\cM(H))_\alpha \subset \cM(H)$ such that
$\theta_\alpha(M_H(\alpha)) = 1$.
Define $\theta_\alpha' := \psi(\theta_\alpha)$.
We know that $\theta_\alpha$ is of the form
$$
\theta_\alpha = \sum_\bi \lambda_\bi \theta_\bi
$$
for some $\lambda_\bi \in K$.
Recalling (\ref{eq.product}), we get
\begin{align*}
\theta_\alpha(M_H(\alpha)) &= \sum_\bi \lambda_\bi \theta_\bi(M_H(\alpha))
= \sum_\bi \lambda_\bi \chi(\cFl_{M_H(\alpha),\bi}) 
\\
&= \sum_\bi \lambda_\bi \chi(\cFl_{M_{H(k)}(\alpha),\bi})
= \sum_\bi \lambda_\bi \theta_\bi'
= \theta_\alpha'(M_{H(k)}(\alpha)).
\end{align*}
This implies that $\theta_\alpha'$ is a non-zero element in the root
space $\cP(\cM(H(k)))_\alpha$.
This finishes the proof.


\section{Examples}\label{sec-examples}


\subsection{A PBW-basis for Dynkin type $B_2$}\label{examples-1}
Let 
\[
C = \bbm 2&-1\\-2&2 \ebm
\]
with symmetrizer $D = \diag(2,1)$ and
$\Omega = \{ (1,2) \}$.
Thus $C$ is a Cartan matrix of Dynkin type $B_2$.
We have $f_{12} = 1$ and $f_{21} = 2$.
Then $H = H(C,D,\Omega)$ is given by the quiver
\[
\xymatrix{
1 \ar@(ul,ur)^{\vep_1} & 2 \ar[l]^{\alpha_{12}}
}
\]
with relation $\vep_1^2 = 0$. 
There are 5 isomorphism classes of indecomposable locally free $H$-modules, namely 
\[
E_1 = P_1\df
\xymatrix@-0.4cm{
1\ar[d]\\1
},
\qquad
P_2\df
\xymatrix@-0.4cm{
1\ar[d]&2\ar[l]\\1
},
\qquad
I_1\df
\xymatrix@-0.4cm{
1\ar[d]&2\ar[l]\\1&2\ar[l]
},
\qquad
E_2 = I_2\df
\xymatrix@-0.4cm{2},
\qquad
X\df
\xymatrix@-0.4cm
{1\ar[d]\\ 
1&2\ar[l]}
\]
Note that $P_2$ and $X$ have the same rank vector.
We have
\begin{align*}
\theta_{(1,0)} &= \theta_1 = \bun_{E_1}, & 
\theta_{(0,1)} &= \theta_2 = \bun_{E_2},\\
\theta_{(1,1)} &= [\theta_1,\theta_2] = \bun_{P_2} + \bun_X, &
\theta_{(1,2)} &= 1/2[[\theta_1,\theta_2],\theta_2] = \bun_{I_1}.
\end{align*}
The enveloping algebra $\M(H) \cong U(\n)$ has a Poincar\'e-Birkhoff-Witt basis given by
\[
 \theta_{(0,1)}^a * \theta_{(1,2)}^b * \theta_{(1,1)}^c * \theta_{(1,0)}^d 
 = a!b!c!d! \sum_{k=0}^c \bun_{I_2^{a}\oplus I_1^b \oplus P_2^{k}\oplus X^{c-k}\oplus P_1^d},
 \qquad (a,b,c,d\in\Z_{\ge 0}).
\]

\subsection{Pseudo AR-sequences for Dynkin type $B_2$}\label{examples-2}
Let $H = H(C,D,\Omega)$ as in Section~\ref{examples-1}.
The Auslander-Reiten quiver of 
$H$ is shown in Figure~\ref{Fig:B2}.
In the last two rows the two modules on the left have to be identified
with the corresponding two modules on the right.
The numbers stand again for basis vectors, the arrows $\alpha_{12}$ and
$\vep_1$ of $Q(C,\Omega)$ act in the obvious way, compare Section~\ref{examples-1}. 
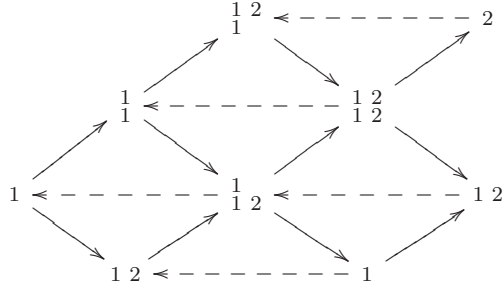
\begin{figure}[!htb]
\[
\xymatrix@R=15pt{
&&{\bsm1&2\\1\esm}\ar[dr]&&
{\bsm 2  \esm}\ar@{-->}[ll]
\\
&{\bsm 1\\1\esm}\ar[dr]\ar[ur]&&
{\bsm 1&2\\1&2\esm}\ar@{-->}[ll]\ar[dr]\ar[ur]
\\
{\bsm 1 \esm}\ar[ur]\ar[dr]&&
{\bsm 1\\1&2 \esm}\ar[dr]\ar[ur]\ar@{-->}[ll]&&
{\bsm 1&2\esm}\ar@{-->}[ll]
\\
&{\bsm 1&2 \esm}\ar[ur]&&
{\bsm 1 \esm}\ar@{-->}[ll]\ar[ur]
}
\] 
\caption{
The Auslander-Reiten quiver of $H(C,D,\Omega)$ of type $B_2$ with
$D$ minimal.}
\label{Fig:B2}
\end{figure}
There are two AR-sequences with preprojective end terms, namely
$0 \to P_2 \to I_1 \to I_2 \to 0$ and
$0 \to P_1 \to P_2 \oplus X \to I_1 \to 0$.
The first sequence is also a pseudo AR-sequences, whereas the second sequence is not.
The pseudo AR-sequence starting in $P_1$ is of the form
$0 \to P_1 \to P_2 \oplus P_2 \to I_1 \to 0$.

\bigskip
{\parindent0cm \bf Acknowledgements.}\,
We thank two anonymous referees for careful reading and detailed recommendations 
which helped to improve the exposition of this paper.
The first author acknowledges financial support from UNAM-PAPIIT grant 
IN108114 and Conacyt Grant 239255.
The third author thanks the SFB/Transregio TR 45 for financial support.


\end{document}